\def\@url#1{{\tt\def~{\lower3.5pt\hbox{\char'176}}\def\_{\char'137}#1}}
\newtheorem{thm}{Theorem}[section]
\newtheorem{cor}[thm]{Corollary}
\newtheorem{prop}[thm]{Proposition}
\newtheorem{lem}[thm]{Lemma}
\theoremstyle{definition}
\newtheorem{defn}[thm]{Definition}
\newtheorem{exmp}[thm]{Example}
\newtheorem{example}[thm]{Example}
\newtheorem{rem}[thm]{Remark}
\newtheorem{construction}[thm]{Construction}
\let\c@lem=\c@thm
\let\c@cor=\c@thm
\let\c@prop=\c@thm
\let\c@lem=\c@thm
\let\c@defn=\c@thm
\let\c@exmps=\c@thm
\let\c@rem=\c@thm
\let\c@warn=\c@thm
\let\c@claim=\c@thm
\numberwithin{equation}{subsection}
\def\endash{\mathchar"707B}
\newcommand{\RR}{\mathbb{R}}
\newcommand{\cell}{\endash \textnormal{cell} \endash}
\newcommand{\leftmod}{\endash \textnormal{mod}}
\newcommand{\leftdmod}{\endash \textnormal{dmod}}
\newcommand{\modin}{\endash \textnormal{mod} \endash}
\newcommand{\acal}{ {\mathcal{A}} }
\newcommand{\cKS}{ {\mathcal{K}} }
\newcommand{\modcat}[1]{#1 \endash \textnormal{mod}}
\newcommand{\RRti}{\widetilde{\RR}}
\newcommand{\Mdef}[2]{\newcommand{#1}{\relax \ifmmode #2 \else $#2$\fi}}
\Mdef{\bA}{\mathbb{A}}
\Mdef{\bB}{\mathbb{B}}
\Mdef{\bC}{\mathbb{C}}
\Mdef{\bD}{\mathbb{D}}
\Mdef{\bE}{\mathbb{E}}
\Mdef{\bF}{\mathbb{F}}
\Mdef{\bG}{\mathbb{G}}
\Mdef{\bH}{\mathbb{H}}
\Mdef{\bI}{\mathbb{I}}
\Mdef{\bJ}{\mathbb{J}}
\Mdef{\bK}{\mathbb{K}}
\Mdef{\bL}{\mathbb{L}}
\Mdef{\bM}{\mathbb{M}}
\newcommand{\bN}{\mathbb{N}}
\Mdef{\bO}{\mathbb{O}}
\Mdef{\bP}{\mathbb{P}}
\Mdef{\bQ}{\mathbb{Q}}
\Mdef{\bR}{\mathbb{R}}
\Mdef{\bS}{\mathbb{S}}
\Mdef{\bT}{\mathbb{T}}
\Mdef{\bU}{\mathbb{U}}
\Mdef{\bV}{\mathbb{V}}
\Mdef{\bW}{\mathbb{W}}
\Mdef{\bX}{\mathbb{X}}
\Mdef{\bY}{\mathbb{Y}}
\Mdef{\bZ}{\mathbb{Z}}
\Mdef{\bbS}{\mathbb{S}}
\Mdef{\mcA}{\mathcal{A}}
\Mdef{\mcB}{\mathcal{B}}
\Mdef{\mcC}{\mathcal{C}}
\Mdef{\mcD}{\mathcal{D}} 
\Mdef{\mcE}{\mathcal{E}}
\Mdef{\mcF}{\mathcal{F}}
\Mdef{\mcG}{\mathcal{G}}
\Mdef{\mcH}{\mathcal{H}} 
\Mdef{\mcI}{\mathcal{I}}
\Mdef{\mcJ}{\mathcal{J}}
\Mdef{\mcK}{\mathcal{K}}
\Mdef{\mcL}{\mathcal{L}}
\Mdef{\mcM}{\mathcal{M}}
\Mdef{\mcN}{\mathcal{N}}
\Mdef{\mcO}{\mathcal{O}}
\Mdef{\mcP}{\mathcal{P}}
\Mdef{\mcQ}{\mathcal{Q}}
\Mdef{\mcR}{\mathcal{R}}
\Mdef{\mcS}{\mathcal{S}}
\Mdef{\mcT}{\mathcal{T}}
\Mdef{\mcU}{\mathcal{U}}
\Mdef{\mcV}{\mathcal{V}}
\Mdef{\mcW}{\mathcal{W}}
\Mdef{\mcX}{\mathcal{X}}
\Mdef{\mcY}{\mathcal{Y}}
\Mdef{\mcZ}{\mathcal{Z}}
\Mdef{\At}{\tilde{A}}
\Mdef{\Bt}{\tilde{B}}
\Mdef{\Ct}{\tilde{C}}
\Mdef{\Et}{\tilde{E}}
\Mdef{\Ht}{\tilde{H}}
\Mdef{\Kt}{\tilde{K}}
\Mdef{\Lt}{\tilde{L}}
\Mdef{\Mt}{\tilde{M}}
\Mdef{\Nt}{\tilde{N}}
\Mdef{\Pt}{\tilde{P}}
\Mdef{\tA}{\tilde{A}}
\Mdef{\tB}{\tilde{B}}
\Mdef{\tC}{\tilde{C}}
\Mdef{\tE}{\tilde{E}}
\Mdef{\tH}{\tilde{H}}
\Mdef{\tK}{\tilde{K}}
\Mdef{\tL}{\tilde{L}}
\Mdef{\tM}{\tilde{M}}
\Mdef{\tN}{\tilde{N}}
\Mdef{\tP}{\tilde{P}}
\Mdef{\ft}{\tilde{f}}
\Mdef{\xt}{\tilde{x}}
\Mdef{\yt}{\tilde{y}}
\Mdef{\Ab}{\overline{A}}
\Mdef{\Bb}{\overline{B}}
\Mdef{\Cb}{\overline{C}}
\Mdef{\Db}{\overline{D}}
\Mdef{\Eb}{\overline{E}}
\Mdef{\Fb}{\overline{F}}
\Mdef{\Gb}{\overline{G}}
\Mdef{\Hb}{\overline{H}}
\Mdef{\Ib}{\overline{I}}
\Mdef{\Jb}{\overline{J}}
\Mdef{\Kb}{\overline{K}}
\Mdef{\Lb}{\overline{L}}
\Mdef{\Mb}{\overline{M}}
\Mdef{\Nb}{\overline{N}}
\Mdef{\Ob}{\overline{O}}
\Mdef{\Pb}{\overline{P}}
\Mdef{\Qb}{\overline{Q}}
\Mdef{\Rb}{\overline{R}}
\Mdef{\Sb}{\overline{S}}
\Mdef{\Tb}{\overline{T}}
\Mdef{\Ub}{\overline{U}}
\Mdef{\Vb}{\overline{V}}
\Mdef{\Wb}{\overline{W}}
\Mdef{\Xb}{\overline{X}}
\Mdef{\Yb}{\overline{Y}}
\Mdef{\Zb}{\overline{Z}}
\Mdef{\db}{\overline{d}}
\Mdef{\hb}{\overline{h}}
\Mdef{\qb}{\overline{q}}
\Mdef{\rb}{\overline{r}}
\Mdef{\tb}{\overline{t}}
\Mdef{\ub}{\overline{u}}
\Mdef{\vb}{\overline{v}}
\Mdef{\hc}{\hat{c}}
\Mdef{\he}{\hat{e}}
\Mdef{\hf}{\hat{f}}
\Mdef{\hA}{\hat{A}}
\Mdef{\hH}{\hat{H}}
\Mdef{\hJ}{\hat{J}}
\Mdef{\hM}{\hat{M}}
\Mdef{\hP}{\hat{P}}
\Mdef{\hQ}{\hat{Q}}
\Mdef{\thetab}{\overline{\theta}}
\Mdef{\phib}{\overline{\phi}}
\Mdef{\uA}{\underline{A}}
\Mdef{\uB}{\underline{B}}
\Mdef{\uC}{\underline{C}}
\Mdef{\uD}{\underline{D}}
\Mdef{\bolda}{\mathbf{a}}
\Mdef{\boldb}{\mathbf{b}}
\Mdef{\boldD}{\mathbf{D}}
\newcommand{\elr}[1]{E\langle #1\rangle}
\newcommand{\lra}{\longrightarrow}
\newcommand{\spO}{\mathrm{Sp}^{O}}
\newcommand{\GspO}{G\mathrm{Sp}^{O}}
\newcommand{\NspO}{\bN \mathrm{Sp}^{O}}
\newcommand{\WspO}{W\mathrm{Sp}^{O}}
\newcommand{\TspO}{\torus \mathrm{Sp}^{O}}
\newcommand{\Tsub}{\mathfrak{T}}
\Mdef{\infl}{\mathrm{inf}}
\Mdef{\defl}{\mathrm{def}}
\Mdef{\res}{\mathrm{res}}
\Mdef{\ind}{\mathrm{ind}}
\Mdef{\coind}{\mathrm{coind}}
\Mdef{\id}{\mathrm{Id}}
\newcommand{\Rinf}{\widetilde{\infl}_W^N}
\newcommand{\adjunction}[4]{
\xymatrix{
#1:#2 \ar@<0.7ex>[r] &
\ar@<0.7ex>[l] #3:#4
}}
\newcommand{\piAG}{\pi^{\mcA (G)}}
\newcommand{\piAN}{\pi^{\mcA (\bN)}}
\newcommand{\piAT}{\pi^{\mcA (\torus)}}
\newcommand{\piA}{\pi^{\mcA}}
\definecolor{orange}{RGB}{255, 127, 0}
\definecolor{darkgreen}{RGB}{35, 127, 89}
\newcommand{\T}{ {\mathbb{T}} }
\newcommand{\torus}{ {\mathbb{T}} }
\newcommand{\codim}{\mathrm{codim}}
\newcommand{\sm }{\wedge}
\newcommand{\tensor}{\otimes}
\newcommand{\Hom}{\mathrm{Hom}}
\newcommand{\Ext}{\mathrm{Ext}}
\Mdef{\bhom}{\mathbf{\hat{H}om}}
\Mdef{\Mod}{\mathrm{mod}}
\newcommand{\st}{\; | \;}
\newcommand{\PCf}{PC_f}
\newcommand{\cEi}{\mcE^{-1}}
\newcommand{\RRhat}{\hat{\bR}}
\newcommand{\Gspectra}{\mbox{$G$--spectra}}
\newcommand{\Wt}{\tilde{W}}
\newcommand{\Sc}{\Sigma^c}
\newcommand{\bfb}{\mathbf{b}}
\newcommand{\bfd}{\mathbf{d}}
\newcommand{\bfe}{\mathbf{e}}
\newcommand{\bff}{\mathbf{f}}
\newcommand{\Q}{\mathbb {Q}}
\newcommand{\R}{\mathbb {R}}
\newcommand{\toralGspectra}{\mbox{toral--$G$--spectra}_{\Q}}
\newcommand{\toralNspectra}{\mbox{toral--$\bN$--spectra}_{\Q}}
\newcommand{\efp}{E\mcF_+}
\newcommand{\efhp}{E\mcF/H_+}
\newcommand{\efkp}{E\mcF/K_+}
\newcommand{\eflp}{E\mcF/L_+}
\newcommand{\siftyV}[1]{S^{\infty V(#1)}}
\newcommand{\sifty}[1]{S^{\infty V(#1)}}
\newcommand{\qqed}{\qed \\[1ex]}
\newcommand{\holim}{\mathop{ \mathop{\mathrm {holim}} \limits_\leftarrow} \nolimits}
\newcommand{\RRt}{\RR_t}
\newcommand{\RRa}{\RR_a}
\newcommand{\RRtop}{\RR_{top}}
\newcommand{\RRttop}{\widetilde{\RR}_{top}}
\newcommand{\RRtmod}{\modcat{\RRt}}
\newcommand{\RRtopmod}{\modcat{\RRtop}}
\renewcommand{\Et}{\mcE_t}
\newcommand{\cOcF}{\mcO_{\mcF}}
\newcommand{\cOcFK}{\mcO_{\mcF /K}}
\newcommand{\cOcFL}{\mcO_{\mcF /L}}
\newcommand{\fibrep}{\widehat{f}}
\newcommand{\quillen}{\underset{Q}{\simeq}}
\newcommand{\fX}{\mathfrak{X}}
\newcommand{\C}{\mathbb{C}}
\title{An algebraic model for rational toral $G$--spectra}
\author[Barnes]{David Barnes}
\address[Barnes]{Mathematical Sciences Research Centre, Queen's University Belfast}
\author[Greenlees]{J.P.C. Greenlees}
\address[Greenlees]{Warwick Mathematics Institute, Zeeman
  Building, Coventry, CV4 7AL, UK}
\author[K\k{e}edziorek]{Magdalena K\k{e}dziorek}
\address[K\k{e}dziorek]{Max Planck Institute for Mathematics, Bonn}
\begin{document}
\begin{abstract}
For $G$ a compact Lie group,  toral $G$--spectra are those rational
$G$--spectra whose geometric isotropy consists of  subgroups of a
maximal torus of $G$. The homotopy category of rational
toral $G$--spectra is a retract  of the category of all rational $G$--spectra.

In this paper we show that the abelian category of \cite{AGtoral}
gives an  algebraic model for the toral part of rational $G$--spectra.
This is a major step in establishing an algebraic model
for all rational $G$--spectra, for any compact Lie group $G$.
\end{abstract}

\maketitle

\tableofcontents

\newpage
\section{Introduction}

\subsection{Overview}
This paper is part of an overarching project to give algebraic
models for the category of rational $G$--spectra for any
compact Lie group $G$ and hence describe the triangulated
category of rational $G$--equivariant cohomology
theories. Specifically, in this paper we give an algebraic model for
the toral $G$--spectra (those concentrated over subgroups of a maximal torus).

In a little more detail,  the conjecture due to the second author \cite{Gmfo} states that for every compact Lie group $G$ there is a graded abelian
category $\mcA (G)$ and a Quillen equivalence
$$\Gspectra \quillen d\mcA(G)$$
between rational $G$--spectra and differential  objects in $\mcA(G)$.
The abelian model $\mcA(G)$ is conjectured to be a category of sheaves over
the space of closed subgroups of $G$ with the stalk over a subgroup
$K$ giving the information over the isotropy group $K$.
For example the stalk over the trivial subgroup gives a model for free
$G$-spectra.

In this paper we focus on rational $G$--spectra with isotropy consisting of subgroups of a maximal torus $\torus$. There are
several reasons for selecting this part: crudely, it is both useful
and accessible. It is useful since it includes many
important examples ($K$-theory, elliptic cohomology, Borel
cohomology,.....) and it is a direct summand  of the whole
category capturing much of the character of the whole. On the other hand it is fairly accessible because it is
relatively easy to describe the subgroups of a torus and their
conjugacy in $G$.

When $G=\torus$ is a torus, all $\torus$--spectra are toral.
In this case, the category $\mcA (\torus)$ has been made explicit \cite{tnq1},
and the conjecture has been proved in \cite{tnqcore}. Moving on, one may then
want to mimic complex representation theory of a compact
Lie group, when one understands representations by restricting them to the
maximal torus,  whilst remembering the action of the Weyl group. Of
course, when restricting to $\torus$  it is only
reasonable to hope to capture information about subconjugates of $\torus$,
so that the toral information is the most we can hope for. Moreover,
 it was proved in \cite{AGtoral} that {\em all} information about all toral subgroups
is captured by restriction to $\torus$. An abelian model
$\mcA (G,toral)$ for toral $G$--spectra was constructed from the model
$\mcA (\torus)$ for the maximal torus $\torus$ by fully taking into
account the action of the Weyl group $W=N_G(\torus)/\torus$ on
$\torus$ and all its subgroups.  The paper \cite{AGtoral} constructed
a finite Adams spectral sequence based on $\mcA (G, toral)$ convergent
for all toral spectra.  Our purpose
here is to show that $\mcA (G, toral)$ can be used to give a full
algebraic model of the category of toral $G$--spectra. In other words, we prove here the conjecture from \cite{AGtoral} that the category of rational toral $G$--spectra is Quillen equivalent to the category of differential objects in $\mcA(G, toral)$.

The toral information is captured by the maximal torus for
any group $G$, and in particular this applies to the normalizer of the
maximal torus $\bN=N_G(\torus)$. This is the simplest of the groups with
maximal torus $\torus$ and Weyl group $W$. It was shown in \cite{AGtoral}
that the category of rational toral spectra for $G$ is a retract of
the category of rational toral spectra for $\bN$.   In fact, we may deduce that toral $G$--spectra
and $\mcA (G, toral)$ are obtained from toral $\bN$--spectra and
$\mcA(\bN,toral)$ by cellularization at a collection of toral cells that we denote
$G/\Tsub_+$ and define later. This observation suggests the idea of the proof for general $G$; first we show that
\[
\toralNspectra \quillen d\mcA (\bN, toral)
\]
and later deduce the sequence of Quillen equivalences
\[
\toralGspectra
\quillen
G/\Tsub_+ \cell \toralNspectra
\quillen
G/\Tsub_+ \cell d\mcA (\bN, toral)
\quillen
d\mcA (G, toral)
\]
by applying cellularization to the first statement, see Diagram \ref{fig:main} for more details.

Finally, since $\torus$ is the identity component of $\bN$ it is not hard to
see that in both topology (toral $\bN$--spectra) and in algebra  (the
abelian category $\mcA(\bN, toral)$), the categories consist of their $\torus$--equivariant
counterpart together with  an action of the Weyl group $W$ (in a
pervasive way that includes  the action of $W$ on the subgroups).
This observation suggests the strategy of the proof for $\bN$; we take the proof of
\cite{tnqcore} for the torus, and show that it  is compatible with
the action of the Weyl group in the sense that the arguments can be
lifted from the category of $\torus$--spectra to toral $\bN$--spectra.

The general idea of the proof is straightforward, but considerable
work is involved in implementing it.  We proceed with describing the issues below.

\subsection{The challenges}

To describe the novel contribution of this paper, we need to explain
in more detail the methods employed in earlier work.

The reduction from $G$ to the normalizer $\bN$ of its maximal torus
(i.e. from the general case to the case with identity component a
torus) applies the cellularization principle of \cite{gscell} to the
restriction-coinduction adjunction. The reason that this works is
that all toral subgroups are represented inside $\bN$  and furthermore
all conjugates of toral subgroups are represented inside $\bN$.
This is in addition to the fundamental fact that
the spectrum $\Sigma^\infty G/\bN_+$ is rationally trivial as a toral
spectrum. This
part of the argument was first described in \cite{KedziorekSO(3)} for the case of $G=SO(3)$. The identification of
$d\mcA (G,toral)$ with the cellularization of $d\mcA (\bN,toral)$ is then
deduced from the results of \cite{AGtoral}.

The step described above reduces us to the case of a group $\bN$ with identity component
$\torus$. To describe this case, we now recall the strategy in the case of a torus from \cite{tnqcore}. We start by identifying
$\torus$--spectra with modules over the sphere spectrum $\bbS$. The
first substantive step is to express $\bbS$ as a homotopy pullback of
a punctured $(r+1)$--cube diagram $\RRti$ of ring $\torus$--spectra, where $r$
is the rank of $\torus$. It follows that the
category of modules over $\bbS$ can be modeled as the cellularization
of the category of modules over the diagram $\RRti$. The advantage is that
each of the ring $\torus$--spectra $\RRti (a)$ in the diagram is determined by its homotopy, and its module category can be understood in algebraic
 terms and ultimately related to the category $\mcA (\torus)$.

We implement here the same argument for $\bN$. The $\bN$--equivariant
sphere should be torally equivalent to the homotopy pullback of an
$(r+1)$--cube of ring $\bN$--spectra. In fact, because of the way toral
equivalence is defined, this will be immediate if we can
construct the terms $\RRti (a)$ (for $a$ a vertex in the cube)
as $\bN$--spectra. This is one of the
major technical achievements of this paper.

The terms $\RRti (a)$ can be viewed as constructed
(using products and localizations) from  stalks $\RRti_K$
corresponding to the  subgroups $K$ of $\torus$. Our main challenge is that
the group $K$ is only fixed by the normalizer $N_\bN(K)$, a group that contains
$\torus$ but will typically not be all of $\bN$. Our first step is to
observe that   $\RRti_K$ can be constructed as a ring $N_\bN(K)$--spectrum (and
not just as a $\torus$--spectrum). The substantial new step in the
argument is the proof that the object constructed from all
of these stalks (not individually $\bN$--spectra) does in fact admit an action of all of $\bN$.
This will be done in Sections \ref{sec:equivcoind} and \ref{sec:theNequivcube}.

\subsection{Precise constructions}
Finally we will be more precise about the categories of objects we are
discussing.

The endomorphism ring of the sphere spectrum is the rational Burnside
ring $[\bbS , \bbS]^G=A(G)$, and tom Dieck showed  that this can
be understood using the mark homomorphism. Indeed, if $f:\bbS\lra
\bbS$ then we may take geometric $H$--fixed points to get a
non-equivariant map $\Phi^Hf: \bbS \lra \bbS$, and we write $m(f)(H)$
for its degree. This defines a function $m(f): \Phi G \lra \Q$ from
the space $\Phi G$ of conjugacy classes of subgroups $H$ of $G$ with finite
index in their normalizers. If we give $\Phi G$  the quotient of the Hausdorff metric
topology, $m(f)$ is continuous, and in fact
\[
m: [\bbS , \bbS]^G\stackrel{\cong}\lra C(\Phi G, \Q)
\]
is an isomorphism \cite[5.6.4, 5.9.13]{tomdieck1}. Here $\Q$ on the right has a discrete topology and $C$ denotes continuous functions. Accordingly any open and closed subset of $\Phi G$
defines an idempotent of $A(G)$ with this support. In particular, the
conjugacy class of the maximal torus is open and closed in $\Phi G$. We write
$e_\torus$ for the associated idempotent in $A(G)$. Evidently
$e_\torus\bbS \simeq E\Lambda (\torus)_+$ where $\Lambda (\torus)$ is the family of all
subgroups of a maximal torus and $E\Lambda (\torus)_+$ denotes the universal $G$--space for the family $\Lambda(\torus)$. The homotopy category of toral
spectra consists of those of the form $e_\torus X\simeq E\Lambda (\torus)_+\sm
X$.

We will be working with model categories, so we should specify our
chosen models.  To start with, our model for $G$--spectra is the category $\GspO$ of
orthogonal $G$--spectra. We rationalize it by localizing at the rational
sphere spectrum $\bbS_\Q$. We form the category of toral $G$--spectra by
localizing at the idempotent $e_\torus$. These can be done together by
localizing with respect to ${e_\torus \bbS_\Q}$.

At the algebraic end, we will not give a detailed description of the
models. Instead we state that the abelian category $\mcA (\torus)$ is
defined for a torus $\torus$ in \cite{tnq1}. The abelian category
$\mcA (G, toral)$ is defined in \cite{AGtoral}. These categories are of
finite injective dimension, and the categories $d\mcA (\torus)$ and $d\mcA
(G, toral)$ of differential graded objects admit injective model structures, as
described in \cite{tnqcore}. Perhaps the most efficient way
to show these injective model structures exist is to use the left-lifting technique of \cite{HKRS}.

With these categories specified, the other notation is easily inferred
using some general constructions. First of all, we adopt the general
conventions of \cite{tnqcore}, since we often consider
corresponding rings in different categories. Thus $\tilde{R}_{top}$ might
be a ring $G$--spectrum, $R_{top}$ the corresponding ring in spectra,
$R_t$ the corresponding ring in DGAs, and $R_a$ in graded rings; this
convention is only suggestive, and the exact definitions will need to
be given in due course.

Coming to model structures,  if $\tilde{R}_{top}$ is a ring in a category of $G$--spectra
the category $\tilde{R}_{top} \leftmod$ denotes the category
of $\tilde{R}_{top}$--modules with the
algebraically projective model structure generated by cells
$G/H_+\sm \tilde{R}_{top}$ (more properly the model structure right lifted from
$G$--spectra along the forgetful functor).
The algebraic side is similar, but here we also use the injective model structure (left-lifted along the forgetful functor). It will be clearly indicated which model structure we have in mind.

We also need to consider diagrams $\RRttop$ of rings in
$G$--spectra, we write $\RRttop \leftmod$ for the category of diagrams
of modules with the diagram-injective model structure (in which weak
equivalences and cofibrations are created by the evaluations at the
vertices).

From these categories we form other model structures by localization.
We write  $L_E$ to denote the left Bousfield-localization with
respect to the object $E$ and $A$-cell to denote cellularization (or right
Bousfield localization) with respect to a set of objects $A$.

\subsection{The strategy}
The plan for this paper is to prove the Quillen equivalences of
Figure \ref{fig:main}.
In that diagram left adjoints are on the left
and double ended arrows indicate zig-zags of Quillen equivalences.

The first step in this paper is to prove that
the model category of toral $G$--spectra is Quillen equivalent to
toral $\bN$--spectra, cellularized at the
set $G/\Tsub_+$ of $\bN$--spectra
$G/K_+$ for $K\subseteq \torus$.

\begin{figure}[!ht]
\caption{Diagram of Quillen equivalences}\label{fig:main}
\begin{minipage}[t]{0.4\textwidth}
\medskip
\begin{center}
\textbf{Classification of toral $G$--spectra}
\end{center}
\[
\xymatrix@R=1.8pc{
\textrm{toral $G$--spectra} = L_{e_{\torus_G}S_{\bQ}}(\GspO)
\ar@<-1ex>[d]_{i^{\ast}}
\\
G/\Tsub_+ \cell L_{e_{\torus_\bN}S_{\bQ}}(\NspO)
\ar@<-1ex>[u]_{F_\bN(G_+,-)}
\ar@<-0ex>[d]_{\textnormal{zig-zag from}}
\\
G/\Tsub_+ \cell d\mcA(\bN,toral)
\ar@<-0ex>[u]_{\textnormal{the $\bN$--case}}
\ar@<-0ex>[d]_{\textnormal{Algebraic}}
\\
d\mcA(G,toral)
\ar@<-0ex>[u]_{\textnormal{simplification}}
}
\]
\end{minipage}
\begin{minipage}[t]{0.4\textwidth}
\medskip
\begin{center}
\textbf{Classification of toral $\bN$--spectra}
\end{center}
\medskip
\[
\xymatrix@R=1.8pc{
L_{e_{\torus_\bN}S_{\bQ}}(\NspO)
\ar@<-1ex>[d]_{ - \wedge \RRttop}
\\
\cKS \cell \RRttop \modin L_{e_{\torus_\bN}S_{\bQ}}(\NspO)
\ar@<-1ex>[u]_{ \lim}
\ar@<+1ex>[d]^{\Psi^\torus}
\\
\cKS^\torus \cell \RRtop \modin L_{e_1 S_\bQ}(\WspO)
\ar@<+1ex>[u]^{ \Rinf}
\ar@<+0ex>[d]_{\textnormal{change of model }}
\\
\cKS^\torus \cell \RRtop \modin  \spO_\bQ[W]
\ar@<-0ex>[u]_{\textnormal{structure and universe}}
\ar@<+1ex>[d]^{ \mathrm{Sing}\circ \mathbb{U}}
\\
\cKS^\torus \cell \RRtop \modin Sp_\bQ^\Sigma[W]
\ar@<+1ex>[u]^{\mathbb{P} \circ |-|}
\ar@<-1ex>[d]_{H\bQ\wedge -}
\\
\cKS^\torus \cell \RRtop \modin (H\bQ \leftdmod[W])
\ar@<-1ex>[u]_{U}
\ar@<-0ex>[d]_{\textnormal{zig-zag of Quillen}}
\\
\cKS_t \cell \RRt \modin \bQ \leftdmod[W].
\ar@<-0ex>[u]_{\textnormal{equivalences - Shipleyfication}}
\ar@<-0ex>[d]_{\textnormal{formality}}
\\
\cKS_a \cell \RRa \modin \bQ \leftdmod[W].
\ar@<-0ex>[u]_{\textnormal{zig-zag}}
\ar@<-0ex>[d]_{\textnormal{Algebraic}}
\\
d\mcA(\bN,toral)
\ar@<-0ex>[u]_{\textnormal{simplification}}
}
\]
\end{minipage}
\end{figure}

The next step is to classify toral $\bN$--spectra in terms of
an algebraic model. We show that there is a series of
Quillen equivalences between toral $\bN$--spectra,
$L_{e_{\torus_\bN}S_{\bQ}}(\NspO)$ and
$d\mcA(\bN,toral)$.
This part uses a number of cellularisations.
In each case the cellularisation is done at the set $\cKS=\bN/\Tsub_+$
 of the (derived) images of the generators
for toral $\bN$--spectra, under the
(derived) composite of functors from
toral $\bN$--spectra to the given category.
This step models rational toral $\bN$--spectra.

The final step for the classification of toral $G$--spectra
is to cellularize the whole diagram for toral $\bN$--spectra
at the (derived) images of cells $G/\Tsub_+$ of $L_{e_{T_G}S_{\bQ}}(\GspO)$ and at the end recognise the model category as $d\mcA(G,toral)$ with the injective model structure.

\subsubsection*{Acknowledgements} The authors wish to thank the Mathematisches Forschungsinstitut Oberwolfach for
providing ideal environment to work on this project as part of the Research in
Pairs Programme.

\section{Reduction to \texorpdfstring{$\bN$}{N}--spectra}

As mentioned in the introduction,
the rational Burnside ring $A(G)$ of an arbitrary compact Lie group $G$
has an idempotent $e_{\torus}$ corresponding to the maximal torus $\torus$ and all
its subconjugates, see \cite{greratmack}.
We may Bousfield localise the category of orthogonal $G$--spectra at
$e_{\torus}S_\bQ$, where $S_\bQ$ is the rational sphere spectrum.

\begin{defn}
The model category of \textbf{toral--$G$--spectra} is
the model category
\[
L_{e_{\torus}S_\bQ} \GspO.
\]
\end{defn}
This model category is Quillen equivalent to the
model structure on $G$--spectra right lifted
from $L_{S_\bQ} \TspO$ using the restriction functor
\[
\res^G_{T}: \GspO \lra \TspO.
\]
It is also Quillen equivalent to the cellularization of
rational $G$--spectra at the set of (all suspensions and desuspensions of) objects
$G/K_+$ as $K$ runs over the subconjugates of $\torus$.
In particular, the weak equivalences in toral--$G$--spectra
are precisely those maps which forget to weak equivalences of
rational $\torus$--spectra.

Thus a toral $G$--spectrum
is determined by its restriction to the maximal torus and
fusion information. All the relevant fusion information
is contained in $\bN=N_G(\torus)$. In effect, everything is organized
around the restriction functor
\[
\res^G_{\bN}: \GspO \lra \NspO.
\]
The restriction functor has both a left and a right adjoint, and it is
the right adjoint that is most important to us. Writing $i: \bN \lra
G$ for the inclusion, so that $i^*=\res^G_{\bN}$ the core structure is
$$\adjunction{i^*}{\GspO}{\NspO}{F_{\bN}(G_+, -)}$$
where $F_{\bN}(G_+, -)$ is the coinduction functor.
We will prove that with suitable model structures, this
adjunction gives a Quillen equivalence.
That this adjunction could be made into a Quillen equivalence
was central to the classification of rational $SO(3)$--spectra,
see \cite{KedziorekSO(3)}.

We begin with the model structures of toral $G$--spectra and
toral $\bN$--spectra, using idempotents
$e_{\torus_G} \in A(G)$ and
$e_{\torus_\bN} \in A(\bN)$ respectively.
The above adjunction gives a Quillen adjunction
\[
\adjunction{i^*}{L_{e_{\torus_G}S_{\bQ}}(\GspO)}{L_{e_{\torus_\bN}S_{\bQ}}(\NspO)}{F_{\bN}(G_+, -)}
\]
as the left adjoint preserves cofibrations and weak equivalences (this follows from the fact that $e_{\torus_\bN}i^*(e_{\torus_G})=e_{\torus_\bN}$ and $i^*$ is strong monoidal).

Now we apply the Cellularization Principle of \cite{gscell} to get a model for
toral $G$--spectra in terms of $\bN$--spectra.

\begin{thm}
\label{thm:toralGspectraisGcelltoralNspectra}
The restriction--coinduction adjunction induces a Quillen equivalence
\[
L_{e_{\torus_G}S_{\bQ}}(\GspO) \quillen G/\Tsub_+ \cell L_{e_{\torus_\bN}S_{\bQ}}(\NspO)
\]
where $G/\Tsub_+$ is the set of $\bN$--spectra $G/K_+$ for $K$ a subgroup of $\torus$.
\end{thm}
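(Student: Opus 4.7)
The approach is to apply the Cellularization Principle of \cite{gscell} to the Quillen adjunction
\[
\adjunction{i^*}{L_{e_{\torus_G}S_{\bQ}}(\GspO)}{L_{e_{\torus_\bN}S_{\bQ}}(\NspO)}{F_{\bN}(G_+, -)}
\]
set up just above the statement. The homotopy category of toral $G$--spectra is generated by the cells $\{G/K_+\}_{K \subseteq \torus}$, so toral $G$--spectra already coincides with its cellularization at this set. The left adjoint $i^*$ simply views $G/K_+$ as an $\bN$--spectrum, so its image on these cells is precisely the set $G/\Tsub_+$ named in the statement. The Cellularization Principle then yields the claimed Quillen equivalence provided that for each $K \subseteq \torus$ the derived unit
\[
\eta_K\colon G/K_+ \lra F_\bN(G_+, i^*(G/K_+))
\]
is a weak equivalence in toral $G$--spectra.

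To verify this, I would use that weak equivalences in toral $G$--spectra are detected by $\res^G_\torus$, so it suffices to show that $\eta_K$ becomes an equivalence after restriction to $\torus$. Equivalently, by the adjunction it is enough to show that for every $H \subseteq \torus$ the restriction map
\[
[G/H_+, G/K_+]_*^G \lra [i^*(G/H_+), i^*(G/K_+)]_*^\bN
\]
is an isomorphism. Two structural features of the toral situation drive this. First, every subgroup of $\torus$ and every $G$--conjugate of such a subgroup is already represented inside $\bN$, so the Mackey decompositions of $i^*(G/H_+)$ and $i^*(G/K_+)$ account for all relevant $G$--conjugacy classes. Second, the rational toral triviality of $\Sigma^\infty G/\bN_+$, together with a Wirthm\"uller-style comparison $F_\bN(G_+, -) \simeq G_+ \wedge_\bN (S^{-L} \wedge -)$ with $L = \mathfrak{g}/\mathfrak{t}$, makes the apparent twist and extra factors produced by restriction followed by coinduction collapse rationally. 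Combining these identifications matches the two morphism sets.

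The hard part is precisely this morphism-level verification. The cleanest implementation would unpack $\res^G_\torus F_\bN(G_+, i^*(G/K_+))$ via a double-coset decomposition indexed by $\torus \backslash G / \bN$ and the Wirthm\"uller isomorphism, then collapse the resulting product using the rational toral triviality of $\Sigma^\infty G/\bN_+$ to reproduce $\res^G_\torus(G/K_+)$. This pattern of argument has already been executed in \cite{KedziorekSO(3)} for $G = SO(3)$, and the statement here is its general compact Lie group implementation. With the derived unit verified on each cell, the Cellularization Principle delivers the Quillen equivalence directly.
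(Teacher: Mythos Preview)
Your overall architecture matches the paper's: both apply the Cellularization Principle of \cite{gscell} to the restriction--coinduction adjunction, using that the cells $G/K_+$ for $K\subseteq\torus$ generate toral $G$--spectra, and both reduce to checking that the derived unit $G/K_+\to F_\bN(G_+,i^*G/K_+)$ is a toral equivalence. One ingredient you omit is the verification that the cells and their images are homotopically compact, which the Cellularization Principle requires; the paper checks this explicitly (smashing localization on the $G$ side, and Illman's finite $\bN$--CW structure on compact $\bN$--manifolds for the images).

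The substantive divergence is in how the derived unit is established. You propose a direct topological argument: restrict to $\torus$, decompose via $\torus\backslash G/\bN$ double cosets, invoke the Wirthm\"uller isomorphism, and collapse the extra pieces using the rational toral triviality of $\Sigma^\infty G/\bN_+$. This is the pattern carried out for $SO(3)$ in \cite{KedziorekSO(3)}, and you correctly flag it as the hard part while leaving it as a sketch. The paper instead outsources the entire unit check to \cite[Corollary~7.11]{AGtoral}, which proves that the unit of the algebraic counterpart $(\theta_*,\Psi)$ of this adjunction is an isomorphism; since the homology functor $\pi^{\mcA}_*$ detects equivalences, this immediately gives the topological statement. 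Your route is more self-contained but requires real work with equivariant double cosets; the paper's route is a one-line citation but depends on the algebraic machinery of \cite{AGtoral} already being in place.
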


\begin{proof}
The essential information is that the spaces $G/K_+$ for
$K\subseteq \torus$ give a set of homotopically compact generators of the category of toral
$G$--spectra. That they are generators is clear from the description of the weak equivalences
and that they are homotopically compact follows as localization at an idempotent is smashing.
Similarly, $i^*(G/K_+)$ for $K\subseteq \torus$ is homotopically compact in $L_{e_{\torus_\bN}S_{\bQ}}(\NspO)$
since a compact $\bN$--manifold admits the structure of a finite $\bN$-CW-complex by
Illmann \cite{illman}.

The result now follows directly from the Cellularization Principle of \cite{gscell}
once we check that the components of the derived unit on generators
$G/K_+\lra F_{\bN}(G_+, G/K_+)$
are  weak equivalences in $L_{e_{\T_G}S_{\bQ}}(G-\spO)$.
This follows from \cite[Corollary 7.11]{AGtoral} which states that the unit of the algebraic counterpart of the derived adjunction of restriction and coinduction on rational toral $G$ and $\bN$ spectra is an isomorphism.
\end{proof}

\section{The sphere as a formal  pullback (recollections from the case
of a torus)}
\label{sec:liftedformalcube}

Central to the case of the torus $\torus$ in \cite{tnqcore}, was that the
$\torus$--equivariant sphere $\bS$ can be constructed as the pullback
of a cubical diagram of commutative ring $\torus$--spectra (called there the
`formal cube'). At each point of the cube other than where $\bS$ sits,  the ring
$\torus$--spectrum is obtained from isotropically simple ring $\torus$--spectra by a
process of localizations and products.

The purpose of Sections \ref{sec:equivcoind} and \ref{sec:theNequivcube}
is to show that the diagram
$\RRti$ of $\torus$--spectra can be lifted to a diagram $\RRttop$ of
$\bN$--spectra.
It then follows automatically that it is still a pullback in toral
$\bN$--spectra.

First we recall the diagram of $\torus$--spectra from
\cite{tnqcore} in Subsection \ref{subsec:formalTdiagram},
with examples in Subsection \ref{subsec:Tdiagramexamples}.
We then introduce the appropriate general coinduction-type
construction  in Section \ref{sec:equivcoind}.
This is applied in
Section \ref{sec:theNequivcube} to particular equivariant diagrams to
construct the cubical diagram in $\bN$--spectra that we require.

\subsection{The characters}
We start by introducing the main characters used to build a sphere as a homotopy limit of a cubical diagram of $\torus$--spectra.

\begin{defn}
For a connected subgroup $K$ of a torus $\torus$, denote by $\mcF /K$ the family
of all finite subgroups of $\torus /K$ (these are in bijective
correspondence to subgroups of $\torus$ with identity component
$K$). We then consider the $\torus/K$--spectrum  $D\efkp:=F(E\mcF/K_+,\bS)$, the functional dual of the universal space for that family $E\mcF/K_+$ in $\torus/K$--spectra.
\end{defn}

We will be using   maps to relate various commutative ring spectra
$D\efkp$ as $K$ varies. Indeed, $D\efkp$ is a commutative ring
$\torus/K$--spectrum and  if $L\subseteq K$ there is a map
\[
\infl_{\torus/K}^{\torus/L} D\efkp \lra D\eflp
\]
of ring $\torus/L$--spectra \cite[Subsection 6.A]{tnqcore}.
To see where this comes from, we observe that its adjunct
$$\eflp \sm \infl_{\torus/K}^{\torus/L}D\efkp \lra \bS$$
is obtained by composing the $\torus/L$--map $\eflp \lra \efkp$ with
evaluation.

If we have any decreasing sequence
$$G = H_0\supseteq H_1 \supseteq \cdots \supseteq
H_{r-1}\supseteq H_r=1$$
of connected subgroups with $\codim (H_i)=i$, then, omitting notation for inflation,
we have a sequence of
maps of ring $\torus$--spectra whose first term is $\bS$ and whose last term is $D\efp$.
\[
\xymatrix{
\bS=DE(\mcF /\torus)_+ \ar[r] &
D(E\mcF /H_1)_+ \ar[r] &
\ldots \ar[r] &
D(E\mcF /H_{r-1})_+ \ar[r] &
D(E\mcF /1)_+
}
\]

For $K$ a connected subgroup of $\torus$, we define
\[
\siftyV{K}=\bigcup_{U^K=0}S^U, 
\]
where $U$ runs over finite dimensional subrepresentations of $\mcU$ 
(our chosen complete $\torus$--universe) such that $U^K=0$.
If $K \subseteq H$, we see that $V^H \subseteq V^K$, so there is a map
$\siftyV{K} \to \siftyV{H}$.

\subsection{The formal diagram in \texorpdfstring{$\torus$}{T}-spectra}
\label{subsec:formalTdiagram}

In this article we use a cubical diagram which is known as the formal cube
$C_f$ in \cite{tnqcore}. To avoid confusion we use slightly different
notation to that in \cite{tnqcore}, and name  the vertices $(b_0,
\ldots, b_r)$ with $b_i\in \{ 0,1\}$.

First we take $\bfd =(d_0, \ldots , d_s)$ to be the dimension vector
of $\bfb$, i.e., the set $\{ j \st b_j=1\}$ arranged in decreasing order. Then the value of $\RRti$ at $\bfb=(b_0, \ldots, b_r)$ is an iterated product as follows, where the subgroups $H_i$ in the products below are connected subgroups of dimension $d_i$.
\begin{multline*}
\RRti (b_0, \ldots , b_r)=
\prod_{\dim (H_0)=d_0} \left[ \siftyV{H_0} \sm
\prod_{\stackrel{H_1\subset H_0}{  \dim (H_1)=d_1}} \Bigg[
\siftyV{H_1} \sm
 \Bigg. \right.
\ldots  \\
\ldots
\sm
\left. \Bigg.\prod_{\stackrel{H_{s-1}\subset H_{s-2}}{ \dim (H_{s-1})=d_{s-1}}} \bigg[ \siftyV{H_{s-1}} \sm
\prod_{\stackrel{H_{s}\subset H_{s-1}}{ \dim (H_{s})=d_{s}}} \siftyV{H_{s}} \sm
D(E\mcF/H_s)_+\bigg] \cdots \Bigg] \right]
\end{multline*}

See Definition \ref{def:Ncube} for the version in $\bN$--spectra.

\subsection{Examples}
\label{subsec:Tdiagramexamples}
The elaborate notation somewhat obscures the simplicity of this construction.

\begin{exmp}{\em (Rank 1)}
In rank 1 the diagram is
$$\diagram
\bS \rto \dto & \siftyV{\torus}\sm DE\mcF/\torus_+\dto \\
D\efp \rto & \siftyV{\torus}\sm DE\mcF+
\enddiagram$$

\end{exmp}

\begin{exmp}{\em (Rank 2)}
It is worth writing the diagram completely in rank 2. The layout of the $\bfb$ vectors is
$$\diagram
&(010)\rrto \ddto&&(110)\ddto\\
(000)\rrto \ddto \urto&& (100) \ddto \urto&\\
&(011)\rrto &&(111)\\
(001)\rrto  \urto&& (101) \urto&
\enddiagram$$
and the diagram of ring spectra is as follows:

\[
\xymatrix@C-1.4cm{
&
\prod_H\siftyV{H}\sm DE\mcF/H_+
\ar[rr] \ar[dd] &&
\siftyV{\torus}\sm \prod_H\siftyV{H}\sm DE\mcF/H_+
\ar[dd]\\
\bS
\ar[rr] \ar[dd] \ar[ur] &&
\siftyV{\torus} \sm DE\mcF/\torus_+
\ar[dd] \ar[ur] \\
&\prod_H\siftyV{H}\sm D\efp
\ar[rr] &&
\siftyV{\torus}\sm \prod_H\siftyV{H}\sm D\efp\\
D\efp
\ar[rr] \ar[ur] &&
\siftyV{\torus} \sm D\efp
\ar[ur]
}
\]

\end{exmp}

The only result we need from \cite{tnqcore} is as follows.

\begin{cor} \cite[Corollary 2.2]{tnqcore}
\label{cor:SisPCfpb}
The cubical diagram $\RRti$ is a homotopy pullback, which is to say that
$\bS$ is the homotopy pullback of the cube diagram $\RRti$
\[
\bS \simeq \underset{v\in \PCf}{\holim} \RRti (v).
\]
Here $PC_f$ denotes the punctured cube, where the initial
vertex has been removed. \qqed
\end{cor}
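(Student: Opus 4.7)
The statement is explicitly attributed to \cite[Corollary 2.2]{tnqcore}, so the strictly honest proof is simply to cite that reference. Accordingly, my plan is to recall why the cited result is true, i.e., to sketch an independent verification that $\RRti$ is a homotopy pullback cube, using the structure of the diagram described in Subsection~\ref{subsec:formalTdiagram}.

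The plan is to argue by induction on the rank $r$ of the torus $\torus$. The rank $0$ case is trivial. For the inductive step, I would exploit the recursive shape of $\RRti$: the $(r+1)$-cube splits into two opposite $r$-dimensional sub-cubes corresponding to whether the coordinate $b_0$ equals $0$ or $1$, joined by maps coming from smashing with $\siftyV{\torus}$ (where here $\torus$ denotes the ambient maximal torus, i.e., the unique subgroup of codimension $0$). More precisely, the face $b_0=1$ is obtained from the face $b_0=0$ by smashing the whole sub-cube with $\siftyV{\torus}$; the pullback of the full punctured cube can therefore be computed in two stages, first taking the pullback of each of the two $r$-dimensional sub-cubes, and then assembling the answer by the resulting square.

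The key homotopical input is the isotropy separation cofibre sequence
\[
E\mcF_+ \lra S^0 \lra \siftyV{\torus},
\]
where $\mcF$ is the family of proper subgroups of $\torus$ of maximal dimension (equivalently, the family used to build $E\mcF/\torus_+$). Applied with $F(-,\bS)$ and using that $\siftyV{\torus}$ is a smashing localization, this gives the rank $1$ isotropy separation square
\[
\xymatrix{
\bS \ar[r] \ar[d] & \siftyV{\torus} \ar[d] \\
D\efp \ar[r] & \siftyV{\torus}\sm D\efp
}
\]
as a homotopy pullback. At each stage of the induction the lower-left corner is identified, by the inductive hypothesis applied to tori of smaller rank (and by the product-over-subgroups structure in the definition of $\RRti$), with the pullback of the face $b_0=0$, while the upper-right corner is identified with the pullback of the face $b_0=1$.

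The main obstacle is purely bookkeeping: matching the products $\prod_{\dim H_i = d_i}$ appearing in the definition of $\RRti(b_0,\dots,b_r)$ with the correct families of subgroups at each inductive step, and verifying that the maps between the two $r$-dimensional faces really are induced by smashing with $\siftyV{\torus}$ via the canonical maps of $E\mcF/H$-spaces. Since this is carried out in \cite{tnqcore}, I would simply defer to that argument and invoke the corollary.
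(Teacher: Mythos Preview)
Your proposal is correct in spirit: the paper itself gives no proof of this statement beyond the citation and the terminal \verb|\qqed|, so invoking \cite[Corollary~2.2]{tnqcore} is exactly what is done here. Your added sketch of the inductive argument is a reasonable outline of how the cited proof goes.

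Two small imprecisions in the sketch are worth flagging. First, the family $\mcF$ in the cofibre sequence $E\mcF_+\to S^0\to \siftyV{\torus}$ is the family of \emph{all} proper subgroups of $\torus$, not merely those of maximal dimension; and it is certainly not ``the family used to build $E\mcF/\torus_+$'', since $\mcF/\torus$ is the family of finite subgroups of the trivial group $\torus/\torus$, so that $DE\mcF/\torus_+\simeq S^0$. Second, the face $b_0=0$ of the $(r+1)$-cube is not literally the formal cube for a single rank-$(r-1)$ torus: it still involves products over all connected subgroups of $\torus$ of the various intermediate dimensions. The induction in \cite{tnqcore} handles this by showing the total homotopy fibre of that face is concentrated at the full group $\torus$ (equivalently, is $\siftyV{\torus}$-local), which is what makes the isotropy-separation square close up. Your phrase ``inductive hypothesis applied to tori of smaller rank (and by the product-over-subgroups structure)'' gestures at this but elides the actual mechanism.
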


\begin{rem} We note that all values of the $\PCf$--diagram $\RRti$ are genuinely commutative rational $\torus$--spectra by \cite{CouniversalCommutative_greenlees}.
\end{rem}

\subsection{Diagrams }
From the $\PCf$--diagram $\RRti$ of
commutative ring $\torus$--spectra we may form a number of other diagrams.
Recall that the $\torus$--fixed points of a commutative ring $\torus$--spectra
is a commutative ring spectrum.

\begin{defn}
\label{defn:PCfdiagrams}
From the $\PCf$--diagram $\RRti$ of commutative ring $\torus$--spectra we
form
\begin{enumerate}
\item the $\PCf$ diagram $\RR =\RRti^\torus$ of commutative ring
  spectra
\item  the $\PCf$ diagram of commutative DGAs obtained from
  $\RR$ using the theorem of Richter and Shipley \cite{richtershipley}
	that the category of commutative
	$H\bQ$--algebras is equivalent to commutative DGAs over $\bQ$ (see Section
  \ref{sec:spectratoDGAsformal})
\item the $\PCf$ diagram $\pi^\torus_*(\RRti)=\pi_*(\RR)=\RRa$
  of graded rings.
\end{enumerate}

\end{defn}

In \cite{tnqcore} it is shown that suitable model categories of
modules over these diagrams are all Quillen equivalent.
This is the path from topology to algebra that we will replicate for
$\bN$--spectra. 
Once we are working in the algebraic context of $\RRa$--modules, the task is to 
simplify the algebraic description to
the toral model $d \acal_a^f(\bN, toral)$.

\section{Equivariant diagrams and coinduction from \texorpdfstring{$\torus$}{T} to \texorpdfstring{$\bN$}{N}}
\label{sec:equivcoind}

We now wish to lift the diagram $\RRti$ to
a diagram of $\bN$--spectra which we will call $\RRttop$. It is clear
what we need to do: the Weyl group $W$ acts on the closed subgroups of
$\torus$, preserving their dimension, and we should group together the
subgroups in each $W$-orbit.  In other
words, if $K=K_1, \ldots, K_w$ make a $W$ orbit, with $W_K$ the
isotropy group of $K$, we use the $\torus$--equivalence
\[
F_{W_K}[W, \siftyV{K})\simeq \prod_i\siftyV{K_i}
\]
to replace the product by the coinduced spectrum, where the bracket $[$ is used to indicate
the addition of a basepoint to the domain.

To implement this idea we need to specify an appropriate framework,
and establish it has the required properties. This is straightforward,
but a number of things must be made explicit. We begin with a space level construction, which we will then extend to
spectra levelwise.

\subsection{\texorpdfstring{$W$}{W}--diagrams}

The construction is a slightly elaborated version of a simple and
familiar one we describe here.

If $A$ is a $W$-set we may consider diagrams $D: A\lra \bC$ in a category $\bC$.
For $w\in W$ we may form the pulled back diagram $w^*D$ defined  by $(w^*D)(a)=D(wa)$,
evidently $e^*D=D$ and one quickly checks that $w^*v^*D=(vw)^*D$.
(In \cite{AGtoral} a right action was used on the set of subgroups so the dictionary relating the
two notations is  $w^*D=(w^{-1})_*D$).

\begin{defn}
A  \textbf{$W$--equivariant $A$--diagram} in $\bC$ is
a diagram $D$ equipped with maps $w_m: D\lra (w^{-1})^*D$ which compose in the sense that
$e_m$ is the identity and $v_mw_m=(vw)_m$.
\end{defn}

From this we can form the space  of sections $\Gamma (A,D)=\prod_{a\in
  A} D(a)$, which is a product of objects of $\bC$ with $W$ permuting
the coordinates.

We need an analogous construction when the diagram $D$ does not actually land
in the category $\C$, but so that the shape of the diagram lets us
show the  `space of sections' $\Gamma (A;D)$  takes values in $\C$. The reader should think of a case of a diagram of $\torus$--spaces (spectra) with a $W$--action which we want to view as a diagram of $\bN$--spaces (spectra), by collecting orbits together. We make this example precise below.

\subsection{\texorpdfstring{$\bN$}{N}--spaces over \texorpdfstring{$A$}{A}}
In our general context we have an extension
$$1\lra \torus \lra \bN \stackrel{p}\lra W\lra 1. $$
For any subgroup $K$ of $W$ we may consider the subgroup
$\Kt:=p^{-1}(K)$ of $\bN$, so that in particular, $\Wt=\bN$.

\begin{defn}
For  a $W$--set $A$, an \textbf{$\bN$--space over $A$} is
\begin{itemize}
\item For each $n \in \bN$,  a map $n_m: D(a) \lra D(p(n)a)$ of
  spaces which is equivariant over the group homomorphism 
  $c_{n^{-1}}: \Wt_a \lra \Wt_{p(n)a}$
  (where $c_h$ is the left conjugation map $c_h(g)=hgh^{-1}$).
\item The map $e_m$ is the identity and the maps are transitive in
  that $n_m n'_m=(n n')_m$.
\end{itemize}
\end{defn}

The definition implies that for each $a \in A$, $D(a)$ is a $\Wt_a$--space.
It is straightforward to define a suitable $\bN$--space
of sections.

\begin{defn}
Given an $\bN$--space $D$ over $A$, we define the \textbf{space of sections} to be the product
\[
\Gamma (A, D):=\prod_{a\in A} D(a).
\]
\end{defn}

\begin{lem}\label{lem:spacesection}
The space of sections has the properties

\begin{description}
\item[$\Gamma 0$] $\Gamma (A,D)$ is an $\bN$--space,
\item[$\Gamma 1$] For any $W$-sets $A_i$, the natural map
 $$\Gamma (\coprod_i A_i, D)\stackrel{\cong}\lra \prod_i \Gamma (A_i,
 D)$$
is a homeomorphism,
\item[$\Gamma 2$]  on a $W$--orbit, this is naturally isomorphic to coinduction
\[
\Gamma (W/K, D)\cong F_{\Kt}(\bN_+,  D(eK)).
\]
\end{description}
\qqed
\end{lem}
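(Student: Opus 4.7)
The plan is to address $\Gamma 0$, $\Gamma 1$, $\Gamma 2$ in turn by direct construction and verification. All three parts are formal, and the only real work is bookkeeping.

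For $\Gamma 0$, I would define the $\bN$--action on $\Gamma(A,D)=\prod_{a\in A}D(a)$ coordinate-wise: given $n\in\bN$ and a section $s$, set
\[
(n\cdot s)(a) := n_m\bigl(s(p(n)^{-1}a)\bigr),
\]
which lies in $D(a)$ because $n_m\colon D(p(n)^{-1}a)\to D(p(n)\cdot p(n)^{-1}a)=D(a)$. The axioms $e_m=\id$ and $(nn')_m=n_m n'_m$ then yield $e\cdot s=s$ and $(nn')\cdot s=n\cdot(n'\cdot s)$; continuity is automatic in the product topology. For $\Gamma 1$, the decomposition $\prod_{a\in\coprod_i A_i}D(a)\cong\prod_i\prod_{a\in A_i}D(a)$ is the universal property of products, and since the $\bN$--action is coordinate-wise and the $W$--action on the indexing set preserves the pieces, the isomorphism is $\bN$--equivariant.

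For $\Gamma 2$, fix a set $T\subseteq\bN$ of representatives for the cosets of $\tilde K$ in $\bN$, including the identity $e$, so that every $n\in\bN$ factors uniquely as $n=tk$ with $t\in T$ and $k\in\tilde K$. I would define
\[
\Phi \colon F_{\tilde K}(\bN_+,D(eK))\lra\Gamma(W/K,D), \qquad \Phi(f)(wK):=t_m(f(t)),
\]
where $t\in T$ is the unique lift of $w$. Independence of the choice of representative follows from the cocycle $(tk)_m=t_m k_m$ combined with the $\tilde K$--equivariance of $f$ and the fact that $k_m$ realises the prescribed $\tilde K$--action on $D(eK)$. Going the other way, I would send $s$ to the map $n\mapsto (n^{-1})_m(s(p(n)K))$; this is well-defined because the cocycle identity shows each $n_m$ is a homeomorphism with inverse $(n^{-1})_m$, and the $\tilde K$--equivariance is a one-line computation from $((nk)^{-1})_m=(k^{-1})_m(n^{-1})_m$. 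Compatibility with the $\bN$--actions on both sides then reduces to substituting the formula from $\Gamma 0$ into the standard action on the coinduction.

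The main obstacle throughout is notational rather than conceptual: $D$ is not a diagram valued in a single category but a compatible family of spaces with varying stabilizer actions, so the conjugation-equivariance clause of the definition of an $\bN$--space over $A$ must be carefully tracked, especially when verifying that $\Phi$ and its inverse intertwine the $\bN$--actions. Nothing is deep, but one has to assemble the equivariances in the correct order.
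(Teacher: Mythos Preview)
Your proposal is correct. The paper states the lemma without proof (the \verb|\qqed| immediately follows the statement), so there is no detailed argument to compare against. The only methodological difference is one of emphasis: the remark the paper places after the lemma suggests deducing $\Gamma 0$ from $\Gamma 1$ and $\Gamma 2$ (decompose $A$ into orbits, recognise each piece as a coinduced $\bN$--space, then take the product), whereas you define the $\bN$--action on $\prod_{a\in A}D(a)$ directly via $(n\cdot s)(a)=n_m\bigl(s(p(n)^{-1}a)\bigr)$ and verify the axioms. Your route is arguably more in line with the paper's own stated preference for a choice-free definition of $\Gamma(A,D)$; the paper's suggested route is slightly slicker but passes through a choice of orbit decomposition. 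One small caution: in your explicit formula for $\Gamma 2$, the precise shape of the $\tilde K$--equivariance condition on $f\in F_{\tilde K}(\bN_+,D(eK))$ depends on whether one uses left or right cosets, and the paper's convention (Section~4.3) takes $\bN=\coprod_i \tilde K\gamma_i$ with the left $\tilde K$--action, so you should align your coset representatives $T$ and the equivariance identity accordingly when writing out the details.
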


We note explicitly that
the individual values $D(a)$ are not $\bN$--spaces.
We could define $\Gamma (A,D)$ in terms of coinduction, but this would require a choice of
decomposition of $A$ into $W$--orbits and then a proof that the result is independent of the choice.
Instead, we define sections in terms of the product
$\prod_{a\in A} D(a)$ as this does not involve any choices. However, the easiest way
to see that it is an $\bN$--space is to use
$\Gamma 1$ and $\Gamma 2$ of the above.

\subsection{Naturality of coinduction}
We discuss some of the formal properties of coinduction,
which we can then apply to our sections construction.

We observe that if $\fX$ is an $(H,G)$--bispace (i.e., it
has commuting left $H$--action and right $G$--action) and $Z$ is an $H$--space then $F_H[\fX, Z)$ is
a $G$--space.
This has the properties:
\begin{itemize}
\item If $\fX=G$ we obtain the coinduction from $H$ to $G$.
\item Given  an $(H,G)$--bispace $\fX$, an $(H',G')$--bispace $\fX'$,
and an $H'$--space $Y'$.

 Any map  $\theta : \fX\lra \fX'$ of bispaces over the
group homomorphism $(\beta, \alpha): (H,G)\lra (H', G')$ induces a map
$$\theta^*: \alpha^* F_{H'}[\fX', Y')\lra  F_{H}[\fX, \beta^*Y')$$
of $G$--spaces. This is contravariantly functorial in $\theta$.
\item If $\fX=\coprod_i \fX_i$ as $(H,G)$--bispaces then the natural map
$$F_H[\fX, Z)\stackrel{\cong}\lra \prod_i F_H[\fX_i, Z)$$
is an isomorphism of $G$--spaces.
\end{itemize}

A number of  special cases follow:
\begin{itemize}
\item If $K\subseteq G$ then restricting the $G$--space $F_H[\fX, Z)$ to a $K$--space is given by
  restricting the right action of $G$ on $\fX$ to $K$.
\item If $\theta : G' \lra G$ is a group homomorphism then
\[
\theta^*F_H[\fX, Z)\cong F_H[(\id_H,\theta)^*\fX , Z).
\]
\item If $H$ is of finite index in $G$ and  we write
  $G=\coprod_iH\gamma_i$ then
\[
F_H[G,Y)\cong \prod_i F_H(H\gamma_i,Y)
\]
and we have a homeomorphism of $H^{\gamma}$--spaces
\[
F_H[H\gamma, Y) \cong c_{\gamma^{-1}}^*Y,
\]
where $c_{\gamma^{-1}}: H^{\gamma}\lra H$ is conjugation.
If we write $[\gamma^{-1}]y$ for the point of $c_{\gamma^{-1}}^*Y$ corresponding
to $y$, it is  given by taking  $[\gamma^{-1}]y$ to the $H$--map $h\gamma \longmapsto
h\gamma [\gamma^{-1}]y= hy$.
\end{itemize}

In particular, if $W_a$ a subgroup of $W$ then $\Wt_a$ is of finite
index in $\Wt$ and we can write
$$\Wt=\coprod_i\Wt_ab_i$$
as left $\Wt_a$--spaces. We note that $\Wt_ab$ is a 
$(\Wt_a,\Wt_a^b)$--bispace (where $\Wt_a^b=b^{-1}\Wt_ab$) and we have an isomorphism of left
$\Wt_a^b$--spaces:
$$F_{\Wt_a}[\Wt_a b, Z)\cong b^*Z, $$
so that for any subgroup $K\subseteq \bigcap_i \Wt_a^{b_i}$, we have
an isomorphism
$$F_{\Wt_a}[\Wt, Z)\cong \prod_ib_i^*Z$$
of $K$--spaces.

We can apply the above properties to the case of an $\bN$--space over $A$
for some transitive $W$--set $A \cong W/W_a \cong W/W_b$ and see that the choice of
orbit representatives is usually unimportant.

\begin{cor} \label{lem:otherfibres}
Suppose that $p(n)a=b$ then $D(b)=n^*D(a)$
and the element $n$ gives an isomorphism
\[
F_{\Wt_a}[\Wt, D(a)) \cong F_{\Wt_b}[\Wt, D(b))
\]
of $\Wt$--spaces.
\end{cor}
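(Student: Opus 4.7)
The plan is to combine the transitivity axiom for an $\bN$--space over $A$ with the formal properties of coinduction assembled in the earlier subsection on naturality of coinduction.

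The first step is to observe that $n_m\colon D(a)\lra D(p(n)a)=D(b)$ is automatically a homeomorphism. This follows from the transitivity relation $v_m w_m=(vw)_m$ applied with $v=n^{-1}$ and $w=n$: one obtains $(n^{-1})_m\circ n_m=e_m=\id$ and, reversing the roles, $n_m\circ (n^{-1})_m=\id$. The built--in equivariance of $n_m$ with respect to the conjugation relating $\Wt_a$ and $\Wt_b$ then identifies $D(b)$ with the twist of $D(a)$ along this conjugation, which is precisely what is encoded in the assertion $D(b)=n^*D(a)$.

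To produce the isomorphism of $\Wt$--spaces, I would appeal to the bispace naturality property of coinduction. The element $n$ defines a map of right $\Wt$--spaces
\[
\theta\colon \Wt\lra \Wt,\qquad \theta(w)=nw,
\]
and this intertwines the left $\Wt_a$--action on the source with the left $\Wt_b$--action on the target through the conjugation supplied by $n$, i.e.\ $\theta(hw)=(nhn^{-1})\theta(w)$ for $h\in \Wt_a$. Applying the bispace naturality statement to $\theta$ together with the equivariant homeomorphism $n_m\colon D(a)\xrightarrow{\cong}D(b)$ yields an induced $\Wt$--equivariant map
\[
F_{\Wt_b}[\Wt,D(b))\lra F_{\Wt_a}[\Wt,D(a)),\qquad f\longmapsto n_m^{-1}\circ f\circ \theta.
\]
Because $\theta$ and $n_m$ are both invertible, so is this map; the inverse is obtained by running the same construction with $n$ replaced by $n^{-1}$.

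The step I expect to be the fiddliest is the bookkeeping around the equivariance directions: one must keep careful track of which conjugation ($c_n$ or $c_{n^{-1}}$) passes between $\Wt_a$ and $\Wt_b$ at each stage, and check that the formula $f\mapsto n_m^{-1}\circ f\circ \theta$ really lands in the correct coinduced space. Once these conventions are pinned down the argument is purely formal, and in particular the resulting isomorphism depends only on $n$ and not on any choice of orbit representative for the $W$--orbit containing $a$ and $b$, in harmony with the discussion following Lemma~\ref{lem:spacesection}.
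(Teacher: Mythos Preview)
Your proposal is correct and follows essentially the same approach as the paper's proof: both define $\theta\colon \Wt\to\Wt$ by $\theta(g)=ng$, observe it is a map of bispaces over $(c_n,\id)\colon(\Wt_a,\Wt)\to(\Wt_b,\Wt)$, and then invoke the naturality property of coinduction together with the identification $D(b)=n^*D(a)$. The paper's version is terser --- it does not spell out the inverse of $n_m$ or the explicit formula $f\mapsto n_m^{-1}\circ f\circ\theta$ --- but the underlying argument is the same.
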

\begin{proof}
We define $\theta : \Wt \lra \Wt$ by $\theta (g)=ng$ which is a map of
bispaces over the map $(c_n, id):(\Wt_a, \Wt)\lra (\Wt_b, \Wt)$. We may then apply
the naturality property since $n^*D(a)=D(b)$.
\end{proof}

Note that the comparison map for two different choices
depends on $n$ and not just $p(n)$.

\subsection{Homotopy theory}
The spaces $\fX$ that we need to consider (in this paper at least!)
are disjoint unions of copies of $\bN$, but viewed as $(\Kt,\bN)$--bispaces for various subgroups $K$ of $W$.

The category of $(H,G)$--bispaces is equivalent to the category of
$H\times G$--spaces.
In our case, the relevant cells are the
$G$--free cells,  $(H \times G)/K$, so that $K \cap (1\times G)=1$.
Hence we use the equivariant Serre model structure
where weak equivalences and fibrations are the maps that are weak equivalences and fibrations of spaces after taking $K$--fixed points for all subgroups $K$ of $H\times G$ such that  $K \cap (1\times G)=1$.

We then wish to show that if $\fX$ is a cofibrant $(H,G)$--bispace then the functor
$F_H[\fX, \cdot)$ is well behaved.
\begin{lem}
If $\fX$ is a cofibrant $(H,G)$--bispace then the functor
$F_H[\fX, \cdot)$ is a right Quillen functor from $H$--spaces to
$G$--spaces with left adjoint $\fX\times_G(\cdot)$. In particular,
$F_H[\fX, \cdot)$ takes fibrant objects to fibrant objects.
\end{lem}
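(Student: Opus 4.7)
The strategy is to exhibit $F_H[\fX, -)$ as the right half of a Quillen adjunction between the Serre model structures on $G$-spaces and $H$-spaces, since preservation of fibrant objects then follows formally. First I would check the bare adjunction $\fX \times_G (-) \dashv F_H[\fX, -)$: an $H$-equivariant map out of the coequaliser $\fX \times_G Y$ is the same data as an $H$-equivariant map $\fX \times Y \to Z$ that coequalises the commuting right $G$-action on $\fX$ with the $G$-action on $Y$, and the ordinary $(\times, F)$-adjunction for spaces then converts this into a $G$-equivariant map $Y \to F_H[\fX, Z)$; naturality in both variables is immediate.

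To show the left adjoint is left Quillen, cofibrant generation reduces the problem to verifying that $\fX \times_G (-)$ takes each generating (trivial) cofibration $G/L \times (S^{n-1} \hookrightarrow D^n)$ (respectively $G/L \times (D^n \hookrightarrow D^n \times I)$) to a (trivial) cofibration of $H$-spaces. Using the natural isomorphism $\fX \times_G (G/L \times X) \cong (\fX/L) \times X$ whenever $X$ carries the trivial $G$-action, this reduces further to showing that $\fX/L := \fX \times_G G/L$ is a cofibrant $H$-space for every closed subgroup $L \le G$.

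I would prove this last claim by induction on the cell structure of $\fX$. For a basic cell $(H \times G)/K$ with $K \cap (1 \times G) = 1$, the first projection exhibits $K$ as the graph of a continuous homomorphism $\phi \colon M \to G$ for some closed $M \le H$, so $(H \times G)/K \cong H \times_M G$ as bispaces, and hence
\[
\bigl((H \times G)/K\bigr) \times_G (G/L) \;\cong\; H \times_M (G/L).
\]
Decomposing $G/L$ into its $M$-orbits $\coprod_i M/M_i$ rewrites this as $\coprod_i H/M_i$, which is visibly $H$-cofibrant. Pushouts, coproducts, products with the interval, and transfinite composition all preserve $H$-cofibrancy, so the cellular induction extends the conclusion to arbitrary cofibrant $\fX$. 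The hard part is the Lie-theoretic orbit decomposition of $G/L$ under $M$ and its compatibility with cell attachment, together with the parallel check that generating trivial cofibrations go to trivial cofibrations (smashing the above argument with $I_+$). Once these are verified, the adjunction is a Quillen adjunction and in particular $F_H[\fX, -)$ preserves fibrant objects.
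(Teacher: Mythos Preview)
Your overall strategy matches the paper's: verify the adjunction and then show the left adjoint preserves generating (acyclic) cofibrations, reducing to the claim that $\fX \times_G (G/L)$ is $H$--cofibrant. The paper's own proof is a single sentence: ``$\fX\times_G (\cdot)$ preserves generating (acyclic) cofibrations, since it takes $G$--CW complexes to spaces admitting the structure of $H$--CW complexes.''

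However, there is a genuine gap in your execution of the key step. You write that one can decompose $G/L$ into its $M$--orbits as $\coprod_i M/M_i$ and hence rewrite $H \times_M (G/L)$ as $\coprod_i H/M_i$. This is false for compact Lie groups: $G/L$ is a smooth manifold, and its $M$--orbit decomposition is a stratification, not a topological coproduct. (Your claim would force $G/L$ to be a discrete union of homogeneous spaces, which it is not unless the groups are finite.) What is actually needed is that $G/L$, viewed as an $M$--space via $\phi$, admits the structure of an $M$--CW complex; this is Illman's theorem, invoked elsewhere in the paper. Once you have an $M$--CW structure on $G/L$, the induction functor $H \times_M (-)$ carries $M$--cells to $H$--cells, giving the required $H$--CW structure on $H \times_M (G/L)$. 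The cellular induction over $\fX$ then proceeds as you describe.

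So replace the orbit-coproduct step with an appeal to Illman's equivariant triangulation theorem, and the argument goes through.
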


\begin{proof}
The functor $\fX\times_G (\cdot)$ preserves generating (acyclic) cofibrations, since
it takes  $G$--CW complexes
to spaces admitting the structure of $H$--CW complexes.
\end{proof}

When $\fX$ is the $(H,G)$--bispace $G$, the functor $\fX\times_G (\cdot)$ is precisely
the forgetful functor from $G$--spaces to $H$--spaces.

We also want to be able to change $\fX$. We define a map $\theta : \fX \lra \fX'$ over $(\beta, \alpha)$
to be a weak equivalence if $\fX^M \to ((\beta, \alpha)^* \fX')^M$ is a weak equivalence of spaces for all
subgroups $M$ of $H \times G$ with $M \cap (1\times G)=1$.

\begin{cor}
A weak equivalence $\theta : \fX \lra \fX'$ over $(\beta, \alpha)$
between cofibrant objects induces an equivalence of the two
constructions: with notation as above
$$\theta^*: \alpha^* F_{H'}[\fX', Y')\stackrel{\simeq}\lra  F_{H}[\fX, \beta^*Y')$$
is a Serre weak equivalence of  $G$--spaces. \qqed
\end{cor}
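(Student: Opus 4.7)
The plan is to reduce the statement to a homotopy invariance property for the mapping space $F_H[-, -)$ viewed as a Quillen bifunctor, and then appeal to Ken Brown's lemma. The first step is to absorb the group change $(\beta,\alpha)$ using the naturality property stated just above the Corollary: $\theta$ factors through the pulled-back bispace $(\beta,\alpha)^*\fX'$, and there is a natural isomorphism of $G$-spaces
\[
\alpha^* F_{H'}[\fX', Y') \cong F_H[(\beta,\alpha)^*\fX', \beta^*Y'),
\]
with $\theta^*$ corresponding to the map induced by $\bar\theta: \fX \to (\beta,\alpha)^*\fX'$ of $(H,G)$-bispaces. The hypothesis on $\theta$ says exactly that $\bar\theta$ is a weak equivalence in the model structure on $(H,G)$-bispaces.

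The target $(\beta,\alpha)^*\fX'$ need not itself be cofibrant, so I would take a cofibrant replacement $\widetilde{\fX}' \to (\beta,\alpha)^*\fX'$ in $(H,G)$-bispaces, and use the cofibrancy of $\fX$ to lift $\bar\theta$ through an acyclic fibration to a weak equivalence $\phi: \fX \to \widetilde{\fX}'$ between cofibrant $(H,G)$-bispaces. Since the map $F_H[(\beta,\alpha)^*\fX', \beta^*Y') \to F_H[\widetilde{\fX}', \beta^*Y')$ is a weak equivalence by the previous lemma (right Quillen functor between fibrant objects, combined with the fact that all spaces are fibrant), it suffices to show that $\phi^*: F_H[\widetilde{\fX}', \beta^*Y') \to F_H[\fX, \beta^*Y')$ is a Serre weak equivalence of $G$-spaces.

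This is the standard Ken Brown argument in the cofibrant variable: factor $\phi$ as $\fX \xrightarrow{j} \fW \xrightarrow{p} \widetilde{\fX}'$ with $j$ an acyclic cofibration and $p$ an acyclic fibration, where $\fW$ is automatically cofibrant. The map $p^*$ is a weak equivalence because $p$ is an acyclic fibration between cofibrant bispaces and $F_H[-, \beta^*Y')$ carries such maps to weak equivalences of $G$-spaces. The map $j^*$ is likewise a weak equivalence (indeed, part of an acyclic fibration) by the dual pushout-product statement applied to the acyclic cofibration $j$ and the trivial fibration $\beta^*Y' \to *$. Both of these facts follow from the pushout-product/SM7 compatibility of the triple $(-\times_G -, F_H[-, -), -\times_H -)$ with the equivariant Serre model structures.

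The main obstacle is therefore the verification of this SM7/Quillen bifunctor property in the present setting. It reduces, as usual, to checking it on generating (acyclic) cofibrations. The generators of the bispace model structure are cells $(H\times G)/K_+ \wedge S^{n-1}_+ \to (H\times G)/K_+ \wedge D^n_+$ with $K \cap (1 \times G) = 1$, and for such $K$ the quotient $(H\times G)/K$ is $G$-free. The required pushout-product checks then come down to the fact that for a $G$-free $H\times G$-space $\fX_0$, the functors $\fX_0 \times_G -$ and $F_H[\fX_0, -)$ translate between $G$-spaces and $H$-spaces while preserving the cellular structure needed to guarantee the Serre weak equivalences, which is precisely the content of the previous lemma.
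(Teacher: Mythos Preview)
Your overall strategy---absorb the group change via the naturality isomorphism $\alpha^* F_{H'}[\fX', Y') \cong F_H[(\beta,\alpha)^*\fX', \beta^*Y')$, then invoke Ken Brown for the contravariant functor $F_H[-,\beta^*Y')$ using the Quillen bifunctor structure---is exactly the argument the paper has in mind (the statement carries a bare \qed and no further proof).

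There is, however, a genuine gap in your cofibrant-replacement step. You claim that the map
\[
p^*: F_H[(\beta,\alpha)^*\fX', \beta^*Y') \lra F_H[\widetilde{\fX}', \beta^*Y')
\]
induced by the acyclic fibration $p:\widetilde{\fX}'\to(\beta,\alpha)^*\fX'$ is a weak equivalence ``by the previous lemma (right Quillen functor between fibrant objects\ldots)''. That lemma concerns $F_H[\fX,-)$ as a right Quillen functor in the \emph{second} variable for cofibrant $\fX$; it says nothing about applying $F_H[-,Z)$ to an acyclic fibration whose target is not known to be cofibrant. The SM7/pushout-product axiom gives you control of $F_H[-,Z)$ on (acyclic) \emph{cofibrations}, and hence by Ken Brown on weak equivalences between cofibrant objects, but not on acyclic fibrations with non-cofibrant target. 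Since your argument for $\bar\theta^*=\phi^*\circ p^*$ requires $p^*$ to be a weak equivalence, this is a real hole.

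The fix the paper implicitly relies on is simpler: in all the applications (and the paper says so explicitly just before this passage---the bispaces in play are disjoint unions of copies of $\bN$, and the relevant $(\beta,\alpha)$ are of the form $(c_n,\mathrm{id})$), the homomorphism $(\beta,\alpha)$ is an isomorphism, so $(\beta,\alpha)^*$ preserves cofibrancy and $\bar\theta$ is already a weak equivalence between cofibrant $(H,G)$-bispaces. Ken Brown then applies directly, with no replacement needed. If you want the Corollary in the generality you attempted, you would need an independent argument that $(\beta,\alpha)^*$ preserves cofibrant objects (true, say, for inclusions of closed subgroups via Illman's theorem that restricted orbits admit equivariant CW structures), rather than the detour through $p^*$.
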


\subsection{\texorpdfstring{$\bN$}{N}--spectra over \texorpdfstring{$A$}{A}}\label{sec:diagramspectra}
All of the above needs to be repeated for diagrams of {\em spectra}.

\begin{defn}
If $A$ is a $W$--set, then $D$, an \textbf{$\bN$--spectrum over $A$}, is a collection
\[
\{ D(a) \in \Wt_a \spO \mid a \in A \}
\]
where $\Wt_a \spO$ is indexed over a universe
obtained by restricting from a complete $\bN$--universe,
along with the data:
\begin{itemize}
\item For each $n \in \bN$,  a map $n_m: D(a) \lra D(p(n)a)$ of
  $\Wt_a$--spectra over the group homomorphism $c_{n^{-1}}: \Wt_a \lra \Wt_{p(n)a}$
(where $c_h$ is the left conjugation map $c_h(g)=hgh^{-1}$).
\item The map $e_m$ is the identity and the maps are transitive in
that $n_m n'_m=(n n')_m$ for all $n, n' \in \bN$.
\end{itemize}
\end{defn}

The idea is to define sections as before by the formula
\[
\Gamma (A, D)(V):=\prod_{a\in A} D(a)(V).
\]
To see that the above construction gives an $\bN$--spectrum, we want to recognise the sections as a product of
coinduced $\bN$--spectra. If $A$ is a transitive $\bN$--space with associated conjugacy class $(H)$
and we choose a representative subgroup $H$, the construction
$F_H[\bN,\cdot)$ extends to orthogonal spectra.  If $V$ is a
representation of $\bN$ we may view it as a representation of $H$ by
restriction and define
$$F_H[\bN, D)(V):= F_H[\bN , D(V)). $$
The structure $\bN$--map
\[
S^{V'}\sm  F_H[\bN , D(V'')) \lra F_H[\bN,D(V'\oplus V''))
\]
has adjunct the $H$--map
\[
S^{V'}\sm F_H[\bN, D(V''))
\lra
S^{V'}\sm F_H[H, D(V''))
=
S^{V'}\sm D(V'')
\lra
D(V'\oplus V'').
\]
The results of this construction for various representatives of the
conjugacy class are related by the maps $n_m$ as required.

The construction makes it obvious that  the objectwise suspension spectrum functor from $\bN$--spaces over a set to
$\bN$--spectra over a set, gives a natural equivalence
\[
F_H[\bN, \Sigma^{\infty}D) \cong \Sigma^{\infty} F_H[\bN, D).
\]
It follows that we have the spectrum level analogue of Lemma \ref{lem:spacesection}.
We also want to consider smash products, see
Section \ref{subsec:smashsections} for a brief discussion.

\begin{lem}\label{lem:spectrasection}
The spectrum of sections has the properties:
\begin{description}
\item[$\Gamma 0$] $\Gamma (A,D)$ is an $\bN$--spectrum,
\item[$\Gamma 1$] the natural map
 $$\Gamma (\coprod_i A_i, D)\stackrel{\cong}\lra \prod_i \Gamma (A_i,
 D)$$
is a homeomorphism,
\item[$\Gamma 2$]  on a $W$--orbit, this is naturally isomorphic to coinduction
\[
\Gamma (W/K, D)\cong F_{\Kt}[\bN,  D(eK)),
\]
\item [$\Gamma 3$] if $D$ is a diagram of commutative rings
  then $\Gamma ( A; D)$ is a commutative ring $\bN$--spectrum. \qqed
\end{description}
\end{lem}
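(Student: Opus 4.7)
The plan is to reduce everything to the space-level Lemma \ref{lem:spacesection} applied representation by representation, combined with the spectrum-level construction of $F_{\Kt}[\bN,-)$ described just above the lemma. The setup is tailor-made for this: the structure $\bN$-map $S^{V'} \sm F_{\Kt}[\bN, D(V'')) \lra F_{\Kt}[\bN, D(V' \oplus V''))$ displayed in Subsection \ref{sec:diagramspectra} is built from the $\Kt$-structure map of $D$, so compatibility with the $\bN$-action comes for free from the coinduction construction.

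For $\Gamma 1$, the universal property of products gives a levelwise bijection
\[
\Gamma(\coprod_i A_i, D)(V) = \prod_{a\in \coprod_i A_i} D(a)(V) \;\cong\; \prod_i\prod_{a\in A_i} D(a)(V) = \prod_i \Gamma(A_i, D)(V),
\]
and the orthogonal structure maps on both sides are assembled coordinatewise, as is the candidate $\bN$-action (permutation of coordinates combined with the transition maps $n_m$). So this is a homeomorphism of orthogonal spectra with compatible $\bN$-action.

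For $\Gamma 2$, one applies the space-level $\Gamma 2$ of Lemma \ref{lem:spacesection} at each representation $V$ to obtain an isomorphism
\[
\Gamma(W/K, D)(V) \;\cong\; F_{\Kt}[\bN, D(eK)(V)) = F_{\Kt}[\bN, D(eK))(V).
\]
Compatibility with the structure maps follows because, under the identification above, the structure maps on both sides are induced from the same $\Kt$-map $S^{V'}\sm D(eK)(V'')\lra D(eK)(V'\oplus V'')$ via the adjunction. Property $\Gamma 0$ then follows: any $W$-set $A$ decomposes as $\coprod_i W/W_{a_i}$, and by $\Gamma 1$ and $\Gamma 2$, $\Gamma(A,D)$ is a product of coinduced $\bN$-spectra, hence is itself an $\bN$-spectrum. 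For $\Gamma 3$, a commutative ring structure on each $D(a)$ equips the levelwise product with a commutative ring structure in orthogonal spectra; the $\bN$-action is by ring maps because each individual transition $n_m$ is, by hypothesis, a map of commutative rings.

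The only step requiring real care is the $\bN$-equivariance in $\Gamma 2$: one must verify that, under the isomorphism $F_{\Kt}[\bN, D(eK)) \cong \prod_{wK\in W/K} D(wK)$ induced by a choice of coset representatives, the action of $n\in \bN$ by left multiplication on $\bN$ matches the combined action of permuting factors according to $p(n)$ and applying the maps $n_m$. This is precisely the content of Corollary \ref{lem:otherfibres} promoted from spaces to spectra, which is immediate from the $V$-by-$V$ assertion and the naturality of the structure maps.
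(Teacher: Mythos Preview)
Your proposal is correct and matches the paper's approach. The paper does not give a proof at all: it simply remarks before the statement that ``we have the spectrum level analogue of Lemma \ref{lem:spacesection}'' and places a \qedsymbol{} after the statement; your levelwise reduction to the space-level lemma, together with the explicit description of the structure maps for $F_{\Kt}[\bN,D)$, is exactly the argument the paper is leaving to the reader.
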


Next we observe that the definition is homotopically well behaved. We need to establish
analogues of the results for spaces.
It is well known \cite[Section V.2]{mm02} that for orthogonal spectra the functor $F_H[\bN,\cdot)$
is a right Quillen functor with left adjoint restriction.

We give the category of $\bN$--spectra over $A$
the projective model structure, so that weak equivalences and
fibrations are pointwise weak equivalences and fibrations of $\torus$--spectra.
Note that it suffices to require these conditions
at only one point in each orbit.
Furthermore, the cofibrant objects are in particular pointwise cofibrant.

\begin{lem}
\label{lem:Grq}
The sections functor $\Gamma$ is a right Quillen functor when
the category of $\bN$--spectra over $A$ is equipped with the projective model structure.
\end{lem}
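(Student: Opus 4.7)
The plan is to exhibit a left adjoint to $\Gamma$ by using properties $\Gamma 1$ and $\Gamma 2$ of Lemma \ref{lem:spectrasection}, and then to reduce the Quillen property to the well-known fact that coinduction $F_{\Kt}[\bN, -)$ is right Quillen for orthogonal spectra \cite[Section V.2]{mm02}.

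First, I would choose an orbit decomposition $A = \coprod_i W/K_i$ with representatives $a_i$, so that $\Wt_{a_i} = \Kt_i$. Lemma \ref{lem:spectrasection} gives a natural isomorphism
\[
\Gamma(A, D) \cong \prod_i F_{\Kt_i}[\bN, D(a_i)),
\]
and combining this with the standard restriction-coinduction adjunction on orthogonal spectra yields
\[
\Hom_{\bN}(Y, \Gamma(A, D))
\cong
\prod_i \Hom_{\Kt_i}\bigl(\res^{\bN}_{\Kt_i} Y,\, D(a_i)\bigr).
\]
This identifies the left adjoint $L$: it sends an $\bN$-spectrum $Y$ to the $\bN$-diagram over $A$ with $L(Y)(a) = \res^{\bN}_{\Wt_a} Y$, with structure maps induced by the action of $\bN$ on $Y$. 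The formula for $L(Y)$ is manifestly independent of the choice of orbit representatives, and a direct check using the naturality properties of coinduction collected earlier shows the adjunction isomorphism is natural in both variables.

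Second, a map $D \to D'$ is a fibration (resp. acyclic fibration) in the projective model structure precisely when $D(a_i) \to D'(a_i)$ is a fibration (resp. acyclic fibration) of $\Kt_i$-spectra at each orbit representative $a_i$. Because $F_{\Kt_i}[\bN, -)$ is right Quillen from $\Kt_i$-spectra to $\bN$-spectra, each factor $F_{\Kt_i}[\bN, D(a_i)) \to F_{\Kt_i}[\bN, D'(a_i))$ is a fibration (resp. acyclic fibration) of $\bN$-spectra. Arbitrary products of (acyclic) fibrations of orthogonal $\bN$-spectra are again (acyclic) fibrations, so $\Gamma(A, D) \to \Gamma(A, D')$ inherits the required property.

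The one point worth flagging is universe compatibility: the hypothesis that $\Wt_a$-spectra are indexed on the restriction of a complete $\bN$-universe ensures that coinduction actually lands in $\bN$-spectra on the full universe and agrees with the standard right Quillen functor of \cite[Section V.2]{mm02}. Once this is granted the argument is formal, and I do not anticipate any substantial obstacles beyond the bookkeeping involved in verifying that $L$ is well-defined as an $\bN$-diagram (i.e.\ that the collection $\{\res^{\bN}_{\Wt_a} Y\}_{a \in A}$ satisfies the compatibility with the maps $n_m$ required in the definition of an $\bN$-spectrum over $A$).
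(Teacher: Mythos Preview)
Your proposal is correct and follows essentially the same approach as the paper: both arguments decompose $\Gamma$ as a product of coinduction functors $F_{\Kt_i}[\bN,-)$ via $\Gamma 1$ and $\Gamma 2$, and then invoke the fact that each coinduction is right Quillen. The only cosmetic difference is that you spell out the left adjoint $L(Y)(a)=\res^{\bN}_{\Wt_a}Y$ explicitly, whereas the paper leaves this implicit and instead notes the slightly stronger fact that $\Gamma$ preserves \emph{all} weak equivalences (not just acyclic fibrations).
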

\begin{proof}
The sections functor is a product of functors to $\bN$--spectra of the form
$F_{\Kt}[\bN, \cdot)$, this functor takes weak equivalences of $H$--spectra to
weak equivalences of $\bN$--spectra. Hence $\Gamma$ preserves all weak equivalences.

Similarly the functor $F_{\Kt}[\bN, \cdot)$ takes fibrations of
$H$--spectra to fibrations of $\bN$--spectra.
\end{proof}

\subsection{Duals and smash products}\label{subsec:smashsections}
For $X$, an $\bN$--spectrum over $A$, we define a dual $DX$
that is also an $\bN$--spectrum over $A$.
We define $DX$ by $D(X(a))=F(X(a), \fibrep \bbS)$,
using a fibrant replacement of the sphere spectrum
without further comment.
The structure maps of $X$ induce the structure maps of $DX$ as below.
\[
X(a)\stackrel{n_m}\lra X(p(n)a)
\quad
\quad
F(X(a), \fibrep \bbS )\stackrel{n_m^*}\longleftarrow F(X(p(n)a), \fibrep \bbS).
\]
Furthermore, $X(p(n)a)\cong p(n)^*X(a)$, so
\[
F(X(p(n)a), \fibrep \bbS) =F(p(n)^*X(a), \fibrep \bbS)=p(n)^*F(X(a), \fibrep \bbS)
\]
and taking $n^{-1}_m=n_m^*$ we obtain the requisite structure maps.
If $X$ is cofibrant, then each $X(a)$ is cofibrant and hence $DX$ is
a homotopically meaningful construction.

More straightforwardly, there is a smash product $X \sm X'$  of
$\bN$--spectra over $A$ defined
by taking smash products pointwise:
\[
(X\sm X')(a)=X(a)\sm X'(a).
\]

The following result follows immediately from the definitions and the fact that coinduction
preserves commutative rings.
\begin{lem}
If $X$ is an $\bN$--spectrum over $A$ consisting of commutative ring spectra
then the $\bN$--spectrum $\Gamma (A; X)$ is a
commutative ring spectrum. \qed
\end{lem}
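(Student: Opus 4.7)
My plan is to combine the decomposition and coinduction identifications from Lemma \ref{lem:spectrasection} with the standard fact that lax symmetric monoidal right adjoints preserve commutative monoids. Concretely, I would split $A$ into $W$--orbits and then treat a single orbit using the coinduction formula $\Gamma 2$.

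First, decompose $A=\coprod_i A_i$ into its $W$--orbits. Property $\Gamma 1$ gives an isomorphism
\[
\Gamma(A,X)\cong \prod_i \Gamma(A_i,X)
\]
of $\bN$--spectra. Since a categorical product of commutative ring $\bN$--spectra is a commutative ring $\bN$--spectrum (with unit and multiplication taken componentwise), it suffices to handle a single transitive orbit $A=W/K$.

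On a single orbit, property $\Gamma 2$ identifies
\[
\Gamma(W/K,X)\cong F_{\Kt}[\bN, X(eK)),
\]
so the problem reduces to showing that coinduction carries commutative ring $\Kt$--spectra to commutative ring $\bN$--spectra. This is a standard monoidal argument: the restriction functor $\bN\spO\to \Kt\spO$ is strong symmetric monoidal for the smash product, hence its right adjoint $F_{\Kt}[\bN,-)$ is lax symmetric monoidal and therefore sends commutative monoids to commutative monoids. Because the smash product on $\bN$--spectra over $A$ is defined pointwise (Subsection \ref{subsec:smashsections}), a commutative ring object $X$ in that category provides a commutative ring $\Wt_a$--spectrum structure on each $X(a)$ whose multiplications are intertwined by the structure maps $n_m$; in particular $X(eK)$ is a commutative ring $\Kt$--spectrum, and $F_{\Kt}[\bN,X(eK))$ is thus a commutative ring $\bN$--spectrum.

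\textbf{Main obstacle.} The only delicate point is verifying that the isomorphisms in $\Gamma 1$ and $\Gamma 2$ are isomorphisms of ring objects, not merely of underlying $\bN$--spectra. This is essentially a naturality check: both identifications are defined pointwise on $A$, and the smash product on $\bN$--spectra over $A$ is likewise defined pointwise, so each side carries the same componentwise multiplication. I do not expect any substantive obstruction beyond bookkeeping.
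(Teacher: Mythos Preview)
Your proposal is correct and is essentially the approach the paper has in mind: the paper marks this lemma with \qed and states just before it that the result ``follows immediately from the definitions and the fact that coinduction preserves commutative rings,'' which is exactly your decomposition via $\Gamma 1$ and $\Gamma 2$ together with the lax monoidality of coinduction. Your additional remarks about checking that the identifications are ring isomorphisms are sound bookkeeping but not something the paper dwells on.
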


Since the positive stable model structure on $\bN$--spectra over $A$
is defined objectwise, it extends to the level of
commutative rings.

\begin{lem}\label{lem:commdiagram}
There is a model structure of commutative ring objects in
$\bN$--spectra over $A$.
\end{lem}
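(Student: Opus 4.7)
The plan is to lift the pointwise stable model structure on $\bN$--spectra over $A$ to commutative ring objects, following the template of Mandell--May \cite{mm02}. First I would replace the projective stable model structure used in Lemma \ref{lem:Grq} with its \emph{positive} variant, by taking at each $a\in A$ the positive stable model structure on orthogonal $\Wt_a$--spectra of \cite{mm02}, and declaring weak equivalences, fibrations and cofibrations on $\bN$--spectra over $A$ pointwise. The verifications underlying Lemma \ref{lem:Grq} carry over unchanged; the only reason to switch to positive cofibrations is so that symmetric powers behave correctly when we pass to commutative rings.

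Next I would reduce to the ordinary (non-diagrammatic) Mandell--May theorem by decomposing across $W$--orbits. Writing $A = \coprod_{[a]} Wa$ with chosen representatives $a$ and setting $\Wt_a := p^{-1}(W_a)$, the datum of an $\bN$--spectrum over the orbit $Wa$ is equivalent, via evaluation at $a$ with inverse the construction of Section \ref{sec:diagramspectra}, to a single orthogonal $\Wt_a$--spectrum. This gives an equivalence of categories
\[
(\bN\textrm{--spectra over }A) \;\simeq\; \prod_{[a]} \Wt_a\spO
\]
under which the pointwise positive stable model structure corresponds to the product of the positive stable model structures on each $\Wt_a\spO$, and the pointwise smash product of Subsection \ref{subsec:smashsections} corresponds to the coordinatewise smash product. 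In particular the equivalence is symmetric monoidal, so commutative ring objects in $\bN$--spectra over $A$ correspond to tuples of commutative ring $\Wt_a$--spectra.

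Finally I would invoke the Mandell--May theorem \cite{mm02} for each $\Wt_a$ (indexed on the universe restricted from a complete $\bN$--universe) to obtain a model structure on commutative ring orthogonal $\Wt_a$--spectra, right-lifted along the forgetful functor from the positive stable model structure. Taking the product over the set of $W$--orbits of $A$ then yields the desired model structure on commutative ring objects in $\bN$--spectra over $A$, with weak equivalences and fibrations still detected pointwise.

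The main obstacle is the freeness condition on symmetric group actions required to lift a symmetric monoidal model structure to its category of commutative monoids; in the Mandell--May framework this is precisely what the positive stable model structure is designed to handle. Because smash products, cofibrations and weak equivalences in $\bN$--spectra over $A$ are all computed pointwise, and because the orbit decomposition above is symmetric monoidal, this condition reduces cleanly on each orbit to the corresponding statement on $\Wt_a\spO$, which is the standard Mandell--May input.
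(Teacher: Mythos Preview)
Your proposal is correct and matches the paper's approach: both arguments pass to the positive stable model structure, decompose $A$ into $W$--orbits so that $\bN$--spectra over a transitive orbit $W/W_a$ are identified with $\Wt_a$--spectra, apply the Mandell--May lifting to commutative rings on each factor, and take the product model structure for general $A$. The only cosmetic difference is that the paper phrases the transitive case as right-lifting the positive model structure from $\Kt$--spectra to $\bN$--spectra over $W/K$ and then lifting to commutative rings, whereas you transport the whole problem across the equivalence of categories first; the content is the same.
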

\begin{proof}
Let $A$ be a transitive $W$--set, $A=W/K$. Then
if $D$ is an $\bN$--spectrum over $A$, $D(eK)$ is a
$\Kt$--spectrum.
Consider the positive stable model structure on
$\Kt$--spectra. This right lifts to the projective model structure on
$\bN$--spectra over $A$, with generating cofibrations and acyclic cofibrations
given by the $W$--equivariant coproduct over $A$ of the generating
cofibrations and acyclic cofibrations of the positive stable model structure on
$\Kt$--spectra.
We can then lift this model structure to the level of commutative rings
noting that the free commutative ring functor sends
coproducts to smash products.

A general $W$--set $A$ is a coproduct of transitive sets $A_i$, for 
$i$ in some indexing set $I$. 
The category of $\bN$--spectra over $A$ is then the product of 
the categories of $\bN$--spectra over $A_i$, $i \in I$. 
We use the product model structure on $\bN$--spectra over $A$
and on commutative ring objects in $\bN$--spectra over $A$.
\end{proof}

\subsection{\texorpdfstring{$\torus$}{T}--fixed points}
If $D$ is an $\bN$--spectrum over $A$, then we can understand $\torus$--fixed
points of the spectrum of sections directly.
\begin{prop}
\label{prop:GammaTfixed}
There is an equivalence
$$\Gamma (A, D)^\torus \simeq \Gamma (A, D^\torus)$$
\end{prop}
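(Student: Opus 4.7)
The strategy is to decompose $A$ into $W$-orbits, apply properties $\Gamma 1$ and $\Gamma 2$ to reduce to coinduction on a single transitive orbit, and then use that $\torus$ is normal in $\bN$ and contained in every isotropy group $\Kt_a$ to identify categorical $\torus$-fixed points with coinduction of $\torus$-fixed points strictly.

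Writing $A = \coprod_i W/K_i$, property $\Gamma 1$ splits both sides as products over $i$. Since $\torus \normal \bN$, conjugation preserves $\torus$, so each structure map $n_m: D(a) \to D(p(n)a)$ restricts to $\torus$-fixed points and $D^\torus$ is a well defined $\bN$-spectrum over $A$; categorical $(-)^\torus$ moreover commutes with products. We are therefore reduced to a single transitive orbit $A = W/K$, and by $\Gamma 2$ the claim becomes a natural isomorphism
\[
F_{\Kt}[\bN, Y)^\torus \cong F_{\Kt}[\bN, Y^\torus)
\]
of $\bN$-spectra, for a $\Kt$-spectrum $Y$. Levelwise, an element of the left-hand side is a $\Kt$-linear map $f: \bN \to Y(V)$ satisfying $f(n\tau) = f(n)$ for all $\tau \in \torus$ and $n \in \bN$. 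Writing $n\tau = (n\tau n^{-1})n$ with $n\tau n^{-1} \in \torus \subseteq \Kt$ by normality, $\Kt$-linearity rewrites this as $(n\tau n^{-1}) f(n) = f(n)$; since $\tau \mapsto n\tau n^{-1}$ ranges over all of $\torus$, the condition is equivalent to $f(n) \in Y(V)^\torus$ for every $n$. This identification is natural in $Y$ and is manifestly compatible with the orthogonal structure maps, yielding the required strict isomorphism.

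To upgrade this strict identity to the asserted derived equivalence, replace $D$ pointwise by a fibrant diagram. By Lemma \ref{lem:Grq}, $\Gamma(A, D)$ is then a fibrant $\bN$-spectrum, so categorical $(-)^\torus$ applied to it computes the derived functor. Pointwise fibrancy is preserved by $(-)^\torus$ on each $D(a)$ when the categorical fixed-point functor is applied in the standard universe-restricted sense, so $D^\torus$ is pointwise fibrant and $\Gamma(A, D^\torus)$ inherits fibrancy by a second appeal to Lemma \ref{lem:Grq}. The main technical point is not the algebraic identity above, which is essentially formal, but rather tracking the fibrancy and the shift of indexing universe to the $\torus$-fixed part across the identifications supplied by $\Gamma 1$ and $\Gamma 2$; given the Quillen-functor machinery already in place this is routine.
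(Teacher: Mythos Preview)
Your proof is correct and follows essentially the same approach as the paper: reduce to a transitive orbit using $\Gamma 1$ and $\Gamma 2$, then identify the $\torus$-fixed points of coinduction. The paper's version is much terser and phrases the key identification as $F_{\Wt_a}[\Wt, D)^\torus \simeq F_{W_a}[W, D(a)^\torus)$, passing directly to the quotient $W$ rather than staying at the level of $\bN$ as you do with $F_{\Kt}[\bN, Y^\torus)$; these are equivalent since $\torus$ acts trivially on $Y^\torus$, and your explicit elementwise verification and fibrancy discussion simply fill in details the paper leaves implicit.
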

\begin{proof}
Since fixed points commute with products, we need only deal with the
case of a transitive $W$--set $A=W/W_a$, and then
\[
F_{\Wt_a}[\Wt , D)^\torus  \simeq F_{W_a}[W, D(a)^\torus)\simeq F[W/W_a, D(a)^\torus). \qedhere
\]
\end{proof}

\section{The \texorpdfstring{$\bN$}{N}--equivariant pullback cube}
\label{sec:theNequivcube}
Where possible we start all constructions at the space level and then
take suspension spectra.

First we consider an action
of $W$ on a set $A$, and for each $a$ we can find a $\Wt_a$--space $D(a)$ so
that for $n\in \bN$ we have an isomorphism
$$n_m:D(p(n)a)\stackrel{\cong}\lra n^*D(a)$$
where $e_m=id$ and $n'_mn''_m=(n'n'')_m$.

\subsection{First examples}

\begin{exmp}
We may consider the set $A=\mcF$ of finite subgroups of $\torus$.
The group $W$ acts on $\mcF$ and a chosen subgroup $F$ is
fixed  by $W_F$. The normalizer of the  subgroup $F$  is
$\Wt_F=p^{-1}(W_F)$. If we use a functorial construction (such as the
bar construction) to make universal
spaces $E\mathcal{H}$, and we define $\elr{H}$ as the mapping cone
$$E[\subset H]_+\lra E[\subseteq H]_+\lra \elr{H}$$
in  $\Wt_F$--spaces then
$$c_{n^{-1}}^*\elr{H}=\elr{H^{n}}$$
and the maps $n_m$ can all be taken to be the identity.
Accordingly this gives an  $\bN$--space  $\elr{\bullet}$ over $\mcF$.
\end{exmp}

Building on this example we may construct an  $\bN$--spectrum over $\mcF$ of functional dual $D\elr{\bullet}$.
\begin{exmp}
Starting with the previous example, $\elr{\bullet}$, taking suspension
spectra gives an $\bN$--spectrum over $\mcF$. We may then take the
fibrewise dual to obtain $D\elr{\bullet}$. More precisely, the value
at $H$ is the $\Wt_H$--spectrum $D\elr{H}$ and we still have
$c_{n^{-1}}^*D\elr{H}=D\elr{H^{n}}$.

Taking sections we obtain the $\bN$--spectrum
\[
\Gamma (\mcF , D\elr{\bullet})\simeq D\efp.
\]
\end{exmp}

\begin{exmp}\label{exmp:siftyV}
We may consider the set $A=\Sc$ of connected closed subgroups of $\torus$. Once
again, if $K$ is a subgroup of the torus, it is fixed
by $W_K\subseteq W$. Choosing a complete $\bN$--universe $\mcU$  we may form  the $\Wt_K$--space
$$\siftyV{K}=\bigcup_{W^K=0}S^W, $$
where $W$ runs over finite dimensional $\Wt_K$--subrepresentations of $\mcU$. A little representation theory verifies that the
geometric isotropy consists precisely of the subgroups containing
$K$. This explains the notation. Pulling back along $n$ induces a
bijection $n_m: \{ W\st W^K=0\} \lra \{ W'\st W^{K^n}=0\}$, and hence
$n^*\siftyV{K}=\siftyV{K^n}$.
This therefore gives an $\bN$--space $\siftyV{\bullet}$ over $\Sc$.

We can of course restrict attention to  the set $\Sc_d$ subgroups of
dimension $d$ and   we then have a $\torus$--equivalence
\[
\Gamma (\Sc_d, \sifty{\bullet}) \simeq_\torus \prod_{K\in \Sc_d}\siftyV{K}.
\]

The map $S^0 \to S^{\infty V(K)}$ induces a map of
$\bN$--spectrum over $\Sc$ (or $\Sc_d$) from the constant diagram at
$S^0$ to $\siftyV{\bullet}$.

This $\bN$--spectrum over $\Sc$ is an $E_\infty^G$--ring spectrum by
\cite[Corollary 4.8]{CouniversalCommutative_greenlees}. Since
$E_\infty^G$--ring spectra are the commutative monoids in orthogonal
$G$--spectra, we consider $\siftyV{\bullet}$
as a commutative ring spectrum.
\end{exmp}

\begin{exmp}
We may again consider the $W$--set $A=\Sc$ of connected closed
subgroups of a torus $\torus$. Since the subgroup $K$ is fixed by
$W_K$, the set $\mcF/K$  of subgroups of $\torus$ with finite image in $\torus/K$
is a family of subgroups of $\Wt_K$. Evidently conjugation by $n$
gives a bijection $\mcF/K \stackrel{\cong}\lra \mcF /K^n$, and hence
$c_{n^{-1}}^*E\mcF/K_+=E\mcF/K^n_+$.
Accordingly the assignment of $E\mcF/K_+$ to $K$ defines an
$\bN$--space  $E\mcF/{(\bullet)}_+$ over $\Sc$.

Taking functional duals we obtain the $\bN$--spectrum $DE\mcF/(\bullet)_+$ over $\Sc$.
The cocommutative diagonal of the space $E\mcF/K_+$ induces a
commutative ring structure on the dual. This ring structure is compatible
with the structure maps $c_{n^{-1}}^*DE\mcF/K_+=DE\mcF/K^n_+$.
It follows that the $\bN$--spectrum $DE\mcF/{(\bullet)}_+$ over $\Sc$
is a diagram of commutative rings.
\end{exmp}

Our main diagram of $\bN$--spectra (given in Definition \ref{def:Ncube}),
is built from diagrams of the form $DE\mcF/(\bullet)_+$
by repeatedly applying the sections functor and localising
at the diagram $\siftyV{\bullet}$.

\subsection{Posets}
We now want to consider a partially ordered set $\Sigma$ with an action of $W$, and a dimension function so that if $s< t$ then $\dim(s)< \dim (t)$.
We might want to insist that the poset has a maximal element $G$ and the dimension is determined by the length of chain to $G$, but in any case we
want the $W$--action to preserve dimension.

This enables us to construct numerous $W$--sets, starting with the set $\Sigma_i$ of elements of dimension $i$. We may consider the
poset $\Sigma'$ of flags $(H_0>H_1>\cdots >H_s)$ in $\Sigma$. This is again a $W$--set, as is the set $\Sigma_s$ of flags of length $s$, and the set
$\Sigma_{(d_0>d_1>\cdots >d_s)}$ of flags of length $s$ with a specified dimension vector $\dim (H_i)=d_i$.

Furthermore,    if $K$ is an object of $\Sigma$  and $e\leq \dim K$ we have the $W$--set
\[
\Sigma_{e\leq K}=\{ L\st L \leq K \mbox{ and } \dim(L)=e\}
\]
and we note that this is a $W_K$--set.

If $D$ is a $\Wt_K$--spectrum over $\Sigma_{e\leq K}$ then,
applying sections gives a $\Wt_K$--spectrum $\Gamma (\Sigma_{e\leq K}, D)$.
We now let $K$ run through elements of dimension $d$.  We require in
addition  that the first section spectrum
satisfies the functoriality as $K$ varies, namely that there are isomorphisms
\[
n_m: \Gamma (\Sigma_{e\leq K}, D) \stackrel{\cong}\lra n^*\Gamma
(\Sigma_{e\leq K^{n^{-1}}}, D) \tag*{($\dagger$)}
\]
which compose functorially.  We may then iterate the
construction.   Indeed, if $e \leq d\leq \dim (H)$, we consider the functor
$$\Sigma_{d\leq H}\ni K \mapsto \Gamma (\Sigma_{e\leq K}, D), $$
and note that it is a $\Wt_H$--spectum over $\Sigma_{d \leq H}$.

One way to ensure the additional property ($\dagger$) is to require that
 $D$ is defined and $\Wt_H$--equivariant on all dimension $e$ elements
$\leq H$.

To display the iterated sections compactly we use
the calculus--style notation
\[
\Gamma (A\!\ni \!a \; ;\;  D(a))=\Gamma(A,D)
\]
to display the names of elements.

\begin{lem}\label{lem:combinesections}
If $D$ is a $\Wt_H$--spectrum over $\Sigma_{e\leq H}$,
with $e\leq d \leq \dim H$ then there is a natural diagonal map
\[
\Gamma (\Sigma_{e\leq H}\ni L; D(L))\lra
\Gamma (\Sigma_{d\leq H}\ni K ; \Gamma (\Sigma_{e\leq K}\ni L;
D(L))),
\]
arising since in the domain there is just one factor for each
$L\subset H$ with $\dim (L)=e$, whereas on the right there is one for
each chain $L\subset K \subset H$ with $\dim (L)=e, \dim (K)=d$. \qqed
\end{lem}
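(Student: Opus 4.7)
The plan is to define the map componentwise using the underlying product descriptions from $\Gamma 1$ and $\Gamma 2$ of Lemma \ref{lem:spectrasection}, and then verify equivariance.

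First I would unpack both sides:
\[
\Gamma(\Sigma_{e\leq H}\ni L; D(L)) \;=\; \prod_{L\in \Sigma_{e\leq H}} D(L),
\]
\[
\Gamma(\Sigma_{d\leq H}\ni K;\; \Gamma(\Sigma_{e\leq K}\ni L; D(L))) \;=\; \prod_{K\in \Sigma_{d\leq H}} \; \prod_{L\in \Sigma_{e\leq K}} D(L).
\]
For each triple $(K,L)$ with $L\subset K \subset H$, $\dim L = e$, $\dim K = d$, define the $(K,L)$--component of the proposed map to be the projection of the left-hand product onto its $L$--factor. Equivalently, this is a diagonal: each factor $D(L)$ on the left is copied into the (finitely many) factors indexed by the $K\in \Sigma_{d\leq H}$ containing $L$. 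This construction is manifestly natural in $D$ and in the poset $\Sigma$.

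Next I would verify $\Wt_H$--equivariance. An element $n\in \Wt_H$ acts on the indexing set of the left product by $L\mapsto p(n)L$ and on the right product by $(K,L)\mapsto (p(n)K, p(n)L)$, with fibrewise action via the structure maps $n_m$. Since the incidence relation $L\subset K$ is preserved by the $W$--action on $\Sigma$ and since dimension is preserved by assumption, the indexing match-up is compatible, and the diagonal intertwines the two actions. At the level of orthogonal spectra the structure maps pass through products and through the maps $n_m$ coherently, so the diagonal really does give a map of $\Wt_H$--spectra; taking spectrum-level sections then yields a map of $\bN$--spectra when $H$ varies.

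The main obstacle, such as it is, is really one of bookkeeping: to write the iterated expression on the right we must know that $K\mapsto \Gamma(\Sigma_{e\leq K}, D)$ is genuinely a $\Wt_H$--spectrum over $\Sigma_{d\leq H}$, which is to say that the functoriality condition ($\dagger$) holds for the inner sections. This is exactly where we use the hypothesis that $D$ is defined and $\Wt_H$--equivariant on all of $\Sigma_{e\leq H}$, so in particular on each $\Sigma_{e\leq K}$ for $K\in \Sigma_{d\leq H}$; the isomorphisms $n_m\colon D(L)\xrightarrow{\cong} n^*D(p(n)^{-1}L)$ restrict to give the required $n_m$ on the inner sections, and compose functorially because they do so on $D$ itself. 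Granted this, everything else is formal.
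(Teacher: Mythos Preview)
Your proposal is correct and matches the paper's approach: the lemma is stated with a \qed\ and no separate proof, the explanation being the parenthetical remark in the statement itself (one factor per $L$ on the left, one per chain $L\subset K\subset H$ on the right), which is exactly the diagonal-into-products you spell out. Your additional discussion of equivariance and of the condition~($\dagger$) is a welcome elaboration of what the paper leaves implicit.
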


\subsection{Dimensional coefficient systems}
We iterate the sections construction,
smashing each stage with $\siftyV{\bullet}$
from Example \ref{exmp:siftyV} and
taking fibrant replacements in commutative ring spectra.
As taking sections, smashing with commutative rings
and taking fibrant replacements preserve
commutative rings, it follows that each stage in this iteration
will be a fibrant commutative
ring spectrum.

It will be clear from the definitions that the final
stage will be a fibrant commutative ring $\bN$--spectrum
which forgets to the same $\torus$--spectrum as in
Section \ref{subsec:formalTdiagram}.

Suppose that $E$ is a commutative ring
$\Wt_K$--spectrum over $\Sigma_{e\leq K}$.
Then we can form a new diagram of commutative ring spectra
over $\Sigma_{e\leq K}$ which at place $L$ takes value
$\siftyV{L} \sm E(L)$.
We define $\bL_{V} E$ to be the functorial fibrant replacement of
this object in the category of
$\Wt_K$--spectrum over $\Sigma_{e\leq K}$ of
Lemma \ref{lem:commdiagram}. We write
$(\bL_{V} E)(L) = \bL_{V(L)}E(L)$ to help remind us that we want
$\siftyV{L}$ at place $L$.

For a $K$ of dimension $d$, we have seen that
\[
\Gamma (\Sigma_{e\leq K}\ni L, \bL_{V(L)} E(L))
\]
is also a commutative ring object. Allowing $K$ to vary, we have
commutative ring objects over the diagram $\Sigma_{d\leq H}$.
\[
\Gamma (\Sigma_{e\leq K}\ni L, \bL_{V(L)} E(L))
\quad \textrm{and} \quad
\bL_{V(K)} \Gamma (\Sigma_{e\leq K}\ni L, \bL_{V(L)} E(L))
\]
We may then define
\[
\Gamma (\Sigma_{e\leq d\leq H}; \bL, E):=
\Gamma (\Sigma_{d\leq H}\ni K;  \bL_{V(K)}
	\Gamma (\Sigma_{e\leq K}\ni L, \bL_{V(L)} E(L)) ).
\]

\begin{defn}
Given a dimension vector  $\bfd=(d_0, d_1, \ldots , d_s)$, and $D$
an $\bN$--spectrum over $\Sigma$,
we may define the iterated localized sections of $D$ at $\bfd$ as
\begin{multline*}
\Gamma (\Sigma_{\bfd}; \bL, D):=
\Gamma(\Sigma_{d_0} \ni H_0;
\bL_{V(H_{0})}
\Gamma(\Sigma_{d_1\subseteq H_0}\ni H_1;
\cdots \\
\cdots  \;
\bL_{V(H_{s-1})}
\Gamma(\Sigma_{d_s\subseteq H_{s-1}}\ni H_s;  \bL_{V(H_s)} D_s(H_s))\cdots ))
\end{multline*}
\end{defn}
We are most interested in the case where $D$ is the $\bN$--spectrum over $\Sigma^c$
given by $D(K):=DE\mcF/K_+$ (recall that $\Sigma^c$ is the poset of
connected subgroups of~$\torus$).

\subsection{Face maps}
For a fixed $D$, we want to turn $\Gamma (\Sigma_{\bfd}; \bL, D)$ into a diagram
over the poset of flags in $\Sigma^c$.
We need to explain how inclusions of dimensional faces $\bfd \lra \bfe$
induce maps of sections
\[
\Gamma(\Sigma_{\bfd}; \bL, D)\lra \Gamma(\Sigma_{\bfe}; \bL, D).
\]
The following two constructions will cover what we need.

\begin{construction}
\label{con:A}
Let $E$ be an $\bN$--spectrum over $\Sigma$ and
$\bfd$ be a dimension vector formed from $\bfe$ by omitting the $i$
vertex and $i$ is not the last vertex.
Using Lemma \ref{lem:combinesections}
and the natural transformation
$\id \to \bL_{ V(\bullet)}$
(coming from the definition of a fibrant replacement and the maps
$S^0 \to \siftyV{\bullet}$)
we have the composite map
\begin{multline*}
\Gamma (\Sigma_{e\leq H}\ni L; E(L))
\lra
\Gamma (\Sigma_{d\leq H}\ni K ; \Gamma (\Sigma_{e\leq K}\ni L; E(L))) \\
\lra
\Gamma (\Sigma_{d\leq H}\ni K ; \bL_{V(K)} \Gamma (\Sigma_{e\leq K}\ni L; E(L))).
\end{multline*}
We note that the first map is a diagonal map.
If $a$ is the dimension preceding $i$ and $b$ is the one after (so that $a>i>b$) then in the domain, if we pick
$H$ of dimension $a$,  each
object $L$ of dimension $b$ contained in $H$ occurs only once, but in the codomain, it occurs once for
each flag $H>K>L$ with $\dim (K)=i$.
\end{construction}

\begin{construction}
\label{con:B}
The second construction will only be used to add on a new final vertex
\[
\bfd = (d_0, d_1, \dots , d_s ) \to (d_0, d_1, \dots , d_s ,d_{s+1}) =\bfe.
\]
In this case we assume that we have a map
of $\bN$--spectra over $\Sigma_{d_{s} \subseteq H_{s-1}}$
\[
\bL_{V(H_s)} D(H_s)
\lra
\bL_{ V(H_s)} \Gamma (\Sigma_{d_{s+1} \subseteq H_s}\ni H_{s+1};
\bL_{ V(H_{s+1})} D(H_{s+1})).
\]
By applying the same sequence of functors to domain and codomain
this gives a map
\[
\Gamma(\Sigma_{\bfd}; \bL, D) \lra \Gamma(\Sigma_{\bfe}; \bL, D).
\]
\end{construction}

\subsection{The \texorpdfstring{$\bN$}{N}--equivariant pullback cube}
We define the cube of interest to the other sections of this paper.
We give the definition first, then verify that this is a cubical diagram of $\bN$--spectra.

For $\bfb$ a point of the cube,
let $\bfd(\bfb) =(d_0, \ldots , d_s)$ to be the dimension vector
of $\bfb$: the set $\{ j \st b_j=1\}$ arranged in decreasing order.

\begin{defn}\label{def:Ncube}
The definition
\[
\RRttop (b_0, \ldots , b_r)=\Gamma (\Sigma^c_{\bfd (\bfb)}; \bL,  DE\mcF/(\bullet)_+)
\]
gives a cubical diagram of commutative ring $\bN$--spectra.
\end{defn}
We know by Lemma \ref{lem:spectrasection}
that each term is a commutative ring $\bN$--spectrum. Moreover,
each term is fibrant, as one operation is to apply a fibrant replacement functor
and the sections functor is a right Quillen functor by Lemma \ref{lem:Grq}.
It is routine to check that this construction lifts the
version for $\torus$--spectra to the level of $\bN$--spectra.

\begin{lem}\label{lem:Ncubeproperties}
Let $i^*$ denote the forgetful from $\bN$--spectra to $\torus$--spectra.
The diagram of $\torus$--spectra $i^* \RRttop$ is objectwise fibrant and is weakly equivalent
to the diagram $\RRti$ of $\torus$--spectra of Section \ref{sec:liftedformalcube}. \qed
\end{lem}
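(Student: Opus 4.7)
The plan is to prove both assertions together by structural induction on the nested applications of $\Gamma$ and $\bL_{V(\bullet)}$ in the definition of $\RRttop$.

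For objectwise fibrancy, I would note that the forgetful functor $i^*: \bN$--spectra $\to \torus$--spectra is a right Quillen functor (its left adjoint is induction $\bN_+ \wedge_{\torus} (-)$, which sends generating cofibrations of $\torus$--spectra to cofibrations of $\bN$--spectra). Each value $\RRttop(\bfb)$ is a fibrant commutative ring $\bN$--spectrum: the construction applies only $\Gamma$ (right Quillen by Lemma \ref{lem:Grq}), smashing with the commutative ring diagram $\siftyV{\bullet}$, and fibrant replacement functors in the model category of Lemma \ref{lem:commdiagram}. Applying $i^*$ preserves fibrancy in the underlying positive stable model structure, giving the first claim.

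For the weak equivalence claim, the central computation is property $\Gamma 2$ of Lemma \ref{lem:spectrasection}: on a transitive orbit $W/W_K$,
\[
\Gamma(W/W_K, D) \cong F_{\Wt_K}[\bN, D(eK)).
\]
Choosing a decomposition $\bN = \coprod_i \Wt_K b_i$ as a left $\Wt_K$--space, the discussion following Corollary \ref{lem:otherfibres} shows that as $\torus$--spectra
\[
i^* F_{\Wt_K}[\bN, D(eK)) \;\cong\; \prod_i b_i^* D(eK) \;\cong\; \prod_{K' \in W\cdot K} D(K').
\]
Thus restriction to $\torus$ converts each $\Gamma$ into a product over a $W$--orbit of subgroups. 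I would iterate this identification through the nested definition of $\RRttop(\bfb)$ corresponding to the dimension vector $\bfd(\bfb)$. At each stage, smashing with $\siftyV{K}$ on the $\bN$--side restricts to smashing with the same $\siftyV{K}$ on the $\torus$--side (the representation $V(K)$ is defined from a complete $\bN$--universe but is used via its underlying $\torus$--representations). Unpacking, the restricted iterated sections assume precisely the form of nested products over chains of connected subgroups $H_0 \supset H_1 \supset \cdots \supset H_s$ with each stage localized by $\siftyV{H_i}$ and terminating in $DE\mcF/H_s{}_+$; this is the very formula defining $\RRti(\bfb)$ in Subsection \ref{subsec:formalTdiagram}.

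The main obstacle will be the treatment of the fibrant replacements. The diagram $\RRttop$ performs fibrant replacements in the right-lifted model structure on $\bN$--spectra over $\Sigma^c$, whereas $\RRti$ does not require such replacements in its original definition. Since the right-lifted model structure is defined orbit by orbit from the positive stable model structures on $\Wt_K$--spectra, and since $i^*$ preserves all weak equivalences of orthogonal spectra, the restriction of each intermediate fibrant replacement is a fibrant replacement of the $\torus$--underlying spectrum up to weak equivalence. Combined with the identification of $\Gamma$ with a product on restriction to $\torus$, this yields the required zigzag of weak equivalences matching $i^* \RRttop(\bfb)$ to $\RRti(\bfb)$, naturally in $\bfb$ so that the identifications assemble into a map of cubical diagrams.
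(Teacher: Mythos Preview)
Your proposal is correct and unpacks precisely what the paper treats as routine: the lemma is stated with a \qed and no proof, the preceding paragraph observing only that fibrancy follows from Lemma~\ref{lem:Grq} together with the fibrant replacement step, and that the construction lifts the $\torus$--version by design. Your inductive argument via $\Gamma 2$, the identification $i^*F_{\Wt_K}[\bN, D(eK))\cong \prod_{K'\in W\cdot K} D(K')$, and the handling of the extra fibrant replacements as objectwise weak equivalences is exactly the intended verification.
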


We must also define the maps of this cubical diagram.
Construction \ref{con:A}  supplies all the face maps which do not involve changing the final vertex.
When adding the final vertex, we may suppose $\dim (H_s)=d_s$, and by
Construction \ref{con:B}  it suffices to observe that there is a map
\[
DE(\mcF/H_s)_+ \lra
\Gamma (\Sigma_{d_{s+1} \subseteq H_s}\ni H_{s+1};
\bL_{V(H_{s+1})} DE(\mcF/H_{s+1})_+).
\]
The basic ingredient is the map
\[
\Delta: DE(\mcF/H_s)_+ \lra
\prod_{\stackrel{H_{s+1}\subset H_s}{\dim (H_{s+1})=d_{s+1}}}  DE(\mcF/H_{s+1})_+
\]
whose components are inflations
\[
DE(\mcF/H_s)_+ \lra  DE(\mcF/H_{s+1})_+.
\]
The map $\Delta$ is composed with the natural transformation
$\id \to \bL_{V(\bullet)}$ in each factor.

We also need to check that it does not matter which order we add vertices in. There are essentially three cases, according to how many of the added
pair of vertices are last. We suppose given dimension vectors $\bfd
\subset \bfe \subset \bff$ where $\bfe$ is formed by adding one vertex
to $\bfd$ and  $\bff$ is formed by adding one vertex to $\bfe$. We suppose $a$ is the lowest dimension  (last vertex) of $\bfd$.
In the first case the added vertices are $i,j$ with $i<j<a$; in the second they are $i,j$ with $i<a<j$; in the third they are $i,j$ with $a<i<j$. We therefore
need to consider the commutativity of the diagrams suggested below.
$$\diagram
a\rto \dto &i,a \dto     & a\rto \dto &i,a \dto                & a\rto \dto &a,i \dto\\
j,a\rto &i,j,a                 & a,j\rto        &i,a,j                     &a,j\rto &a,i,j
\enddiagram$$

In the first and second cases, commutation is clear from the unit axiom and the universal property of the product. In the final case, there is the
additional ingredient that the composite of inflation
maps
\[
DE\mcF/H_+\lra DE\mcF/K_+\lra DE\mcF/L_+
\]
is equal to the direct inflation map $DE\mcF/H_+\lra DE\mcF/L_+$ (where $\dim L=a$, $\dim K=b$
and $\dim H=c$), which we deal with in Subsection
\ref{subsec:inflcomp}.

\subsection{Composites of inflation maps}
\label{subsec:inflcomp}
In this subsction we consider the inflation maps in more detail,
observing that with an appropriate set of details we can ensure
actual functoriality (rather than just up to homotopy).

To start with, note that the construction takes the family
$\mcF/K$ of subgroups of $\torus/K$. This is also a family of subgroups of
$W_\bN(K)=N_\bN(K)/K$, and viewing it in this way we may form the
universal $W_\bN(K)$--space $E\mcF/K$ and then the
$W_\bN(K)$--spectrum $DE\mcF/K_+$. We then inflate this to form the
$N_\bN(K)$--spectrum $\infl DE\mcF/K_+$. For most of the paper  we omit the notation for
inflation, but in this subsection the additional notation is necessary
so that we can be clear (the point is that this spectrum is generally inequivalent to
the spectrum $D\infl E\mcF/K_+$ formed by taking the
$N_\bN(K)$--equivariant dual).

Now suppose that we have an inclusion $L\leq K$ of connected
subgroups of $\torus$.  We consider the induced map
\[
\infl D E\mcF/K_+\lra \infl D E\mcF/L_+.
\]
The first thing to say is that the first spectrum is an
$N_\bN(K)$--spectrum and the second is an $N_\bN(L)$--spectrum, so that the
most we can hope is for a map which is $N_\bN(K)\cap
N_\bN(L)$--equivariant. For brevity, we write
\[
N'=N_\bN(K\supseteq L)=N_\bN(K)\cap N_\bN(L)
\]
for this simultaneous normalizer, and note that it is a finite extension of $\torus$.

Next, we note that if we are more careful about the ambient
equivariance in the dualization we have a rational equivalence
\[
\infl D_{W_\bN(K)}E\mcF/K_+
\simeq \infl D_{N'/K}E\mcF/K_+
\]
of $N'$--spectra, and similarly for $L$. On that basis we may omit
subscripts for dualization in other sections.

Now the $N'$--map
$$\infl D_{W_\bN(K)}E\mcF/K_+\lra \infl D_{W_\bN(L)} E\mcF/L_+$$
is defined as follows. Since $L\leq K$ we may inflate from $N'/K$ to $N'/L$ and then inflate
the whole map to $N'$. This is the inflation of a  map
$$\infl D_{N'/K}E\mcF/K_+\lra  D_{N'/L} E\mcF/L_+, $$
which is the adjunct of
\begin{multline*}
E\mcF/L_+\sm \infl D_{N'/K}E\mcF/K_+\lra E\mcF/K_+\sm \infl
D_{N'/K}E\mcF/K_+ \\
=\infl ( E\mcF/K_+\sm D_{N'/K}E\mcF/K_+)\stackrel{\infl(ev)}
\lra  S^0.
\end{multline*}

Now  suppose we have cotoral inclusions and consider $L\leq K\leq H$.
The map
\[
\infl D_{W_\bN(H)}E\mcF/H_+\lra \infl D_{W_\bN(L)}E\mcF/L_+
\]
for the inclusion $L\leq K$ is $N_\bN(K\supseteq L)$--equivariant, but if
we restrict to $N_\bN(H\supseteq K\supseteq L)$, the defining conditions
on the adjunct are satisfied by the adjunct of
$$\infl D_{W_\bN(H)}E\mcF/H_+\lra \infl D_{W_\bN(K)}E\mcF/K_+ \lra \infl
D_{W_\bN(L)}E\mcF/L_+. $$
Accordingly the inflation maps are functorial as required.

\subsection{Passage to cubical diagrams}
The results of Sections \ref{sec:equivcoind} and
\ref{sec:theNequivcube} allow us to move from a localization of $\bN$--spectra
to a cellularization of modules over our cubical diagram of rings.

\begin{defn}\label{def:cells}
We let $\cKS=\{ \bN/L_+ \sm \RRttop  \mid L \leq \torus \}$ denote the cells of
$\RRttop \modin L_{e_{\torus_\bN}S_{\bQ}}(\NspO)$.
\end{defn}
Note that $\{ \bN/L \mid L \leq \torus \}$
is a set of generators for toral--$\bN$--spectra and it forgets to
a set of generators for $\torus$--spectra. Applying the functor
$(-) \sm \RRttop$ to this set gives $\cKS$.
The objects of $\cKS$ are small in the homotopy category of
$\RRttop \modin L_{e_{\torus_\bN}S_{\bQ}}(\NspO)$
by a similar argument to that above
\cite[Proposition 3.2.5]{BGKSso2}.

\begin{prop}
The adjunction $((-) \sm \RRttop, \lim)$ induces
a Quillen equivalence
$$L_{e_{\torus_\bN}S_{\bQ}}(\NspO)\quillen \cKS \cell \RRttop \modin L_{e_{\torus_\bN}S_{\bQ}}(\NspO).$$
\end{prop}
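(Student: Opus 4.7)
The plan is to apply the Cellularization Principle of \cite{gscell} to the Quillen adjunction
\[
\adjunction{(-) \wedge \RRttop}{L_{e_{\torus_\bN}S_\bQ}(\NspO)}{\RRttop \modin L_{e_{\torus_\bN}S_\bQ}(\NspO)}{\lim}
\]
where the module category carries the diagram-injective model structure. First I would verify that this is a Quillen adjunction: cofibrations and weak equivalences in the diagram-injective structure are created objectwise, so smashing with the pointwise cofibrant commutative ring spectrum $\RRttop$ preserves (acyclic) cofibrations of $\bN$-spectra.

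Next I would confirm the compactness hypotheses of the Cellularization Principle: the set $\{\bN/L_+ \mid L \leq \torus\}$ is a set of homotopically compact generators for $L_{e_{\torus_\bN}S_\bQ}(\NspO)$, since localization at the idempotent $e_{\torus_\bN}$ is smashing and compactness is inherited from rational $\bN$-spectra; the compactness of the derived images $\cKS$ was noted immediately after Definition \ref{def:cells}.

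The heart of the proof is to verify that for each $L \leq \torus$ the derived unit
\[
\eta_L \colon \bN/L_+ \lra \holim\!\bigl((\bN/L_+) \wedge \RRttop\bigr)
\]
is a weak equivalence of toral $\bN$-spectra. Since toral weak equivalences are detected after restriction $i^*$ to rational $\torus$-spectra, and $i^*$ commutes with both $\wedge$ (being strong monoidal) and $\holim$ (having both adjoints), it suffices to prove the corresponding statement for $i^*\eta_L$. By Lemma \ref{lem:Ncubeproperties}, $i^*\RRttop \simeq \RRti$ as diagrams of $\torus$-spectra. Moreover, because $\torus$ is normal in $\bN$ with finite quotient $W$, the restriction $i^*(\bN/L_+)$ splits as a finite wedge of orbit spectra $\torus/L'_+$ with $L' \leq \torus$ a $\bN$-conjugate of $L$. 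Thus the problem reduces to checking that $\torus/K_+ \to \holim(\torus/K_+ \wedge \RRti)$ is a weak equivalence of $\torus$-spectra for each $K \leq \torus$.

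For $K = \torus$ (so $\torus/K_+ = S^0$) this is Corollary \ref{cor:SisPCfpb}, noting that the $\holim$ over the full cube agrees with the $\holim$ over $\PCf$ because the initial vertex of the cube is $\bS$. For a general $K \leq \torus$, each non-initial $\RRti(v)$ is a smashing localization of $\bS$, so smashing with the compact cell $\torus/K_+$ commutes up to weak equivalence with $\holim_{\PCf}$, reducing to the sphere case; this is the torus analogue of the present Proposition, essentially the torus-level result from \cite{tnqcore}. The main obstacle I anticipate is precisely this bookkeeping — establishing cleanly that smashing with compact toral cells commutes with the punctured-cube homotopy limit via the smashing nature of the $\sifty{\bullet}$-localizations — rather than any single deep new step.
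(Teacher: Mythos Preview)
Your proposal is correct and takes essentially the same approach as the paper: reduce the derived unit condition to the $\torus$--level via Lemma \ref{lem:Ncubeproperties} and then invoke the torus result \cite[Corollary 6.3]{tnqcore}. The paper compresses this into a single sentence, observing that weak equivalences on both sides are created by the forgetful functor to $\torus$--spectra, so the known Quillen equivalence at the $\torus$--level lifts directly; you unpack this lifting through the Cellularization Principle and the explicit decomposition of $i^*(\bN/L_+)$.

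Two minor points of bookkeeping. First, the diagram $\RRttop$ is already a $\PCf$--diagram (the initial vertex is not part of it), so your remark comparing $\holim$ over the full cube with $\holim$ over $\PCf$ is unnecessary and slightly misstated: the right adjoint is the (derived) pullback over the punctured cube from the outset. Second, the assertion that ``each non-initial $\RRti(v)$ is a smashing localization of $\bS$'' is not literally true --- the vertices involve infinite products of localizations --- so the commutation of $\torus/K_+ \sm (-)$ with $\holim_{\PCf}$ is not an immediate consequence of smashing localization. You correctly identify this as the point where one must simply cite the torus case from \cite{tnqcore}, which is exactly what the paper does; attempting to re-prove it via smashing properties would be circular.
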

\begin{proof}
As $\RRttop$ forgets to $\RRti$ by Lemma \ref{lem:Ncubeproperties},
and in each category the weak equivalences are defined via a
forgetful functor to a model category constructed from $\torus$--spectra
we may lift the Quillen equivalence of \cite[Corollary 6.3]{tnqcore}
to the level of $\bN$--spectra.
\end{proof}

\section{Passage to torus fixed points}\label{sec:torusfixed}
The purpose of this section is to understand the homotopy groups of the $\torus$--fixed points of
the rings $\RRttop (b_0, \ldots , b_r)$ of Definition \ref{def:Ncube}.

Notice that taking $\torus$--fixed points objectwise to $\RRttop$
will move us from a diagram of
commutative ring $\bN$--spectra to a diagram $\RRtop$ of
commutative ring $W$--spectra, where $W=\bN/\torus$ is a finite group.
Recalling that each term is fibrant, we see that
the answer will be homotopically meaningful. Accordingly, we consider
the diagram
\[
\RRtop (b_0, \ldots , b_r):=\RRttop (b_0, \ldots , b_r)^\torus
\]
of $W$--spectra.  Taking $\torus$--fixed points objectwise gives an adjunction
\[
\adjunction{\Rinf}
{\RRtop \modin L_{e_1S_\bQ}(\WspO)}
{\RRttop \modin L_{e_{\torus_\bN}S_{\bQ}}(\NspO)}
{\Psi^\torus}
\]
by the results of \cite{gsfixed}.

\begin{prop}
The adjunction $(\Rinf , \Psi^\torus)$ induces
a Quillen equivalence
\[
\RRtop \modin L_{e_1S_\bQ}(\WspO)
\quillen
\RRttop \modin L_{e_{\torus_\bN}S_{\bQ}}(\NspO)
\]
and hence an equivalence between the cellularized categories
\[
\cKS^\torus \cell \RRtop \modin L_{e_1S_\bQ}(\WspO)
\quillen
\cKS \cell \RRttop \modin L_{e_{\torus_\bN}S_{\bQ}}(\NspO)
\]
\end{prop}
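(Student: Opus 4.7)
The strategy is to reduce the proposition to a pointwise application of the Greenlees--Shipley fixed-point theorem \cite{gsfixed}, upgraded to accommodate the finite Weyl group $W = \bN/\torus$, and then derive the cellularised statement via the Cellularization Principle of \cite{gscell}. The first equivalence is the real content; the second follows from it by a standard cellularisation argument.

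First I would check that $(\Rinf, \Psi^\torus)$ is a Quillen adjunction on the underlying module categories. Since $\Psi^\torus$ is categorical $\torus$-fixed points applied levelwise, and $\RRttop$ is pointwise fibrant by Lemma \ref{lem:spectrasection} and Lemma \ref{lem:commdiagram}, $\Psi^\torus$ preserves fibrations and weak equivalences of modules over $\RRttop$. Compatibility with the Bousfield localisations is the next point: for a toral $\bN$-spectrum $M$, the geometric isotropy lies in subgroups of $\torus$, so $\Psi^\torus M$ has only trivial geometric isotropy as a $W$-spectrum, matching $L_{e_1 S_\bQ}(\WspO)$; conversely, $\Rinf$ lands in the toral part by construction. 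Thus the adjunction descends to the localised categories.

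The core technical step is that the derived unit and counit are weak equivalences. This is where I would invoke \cite{gsfixed}: each vertex $\RRttop(\bfb)$ is, by Definition \ref{def:Ncube}, built by iterated smashing with $\siftyV{K}$ and functional duals $DE\mcF/K_+$, which is exactly the class of rings for which the $\torus$-fixed-point functor induces a Quillen equivalence of module categories between toral $\torus$-spectra and ordinary spectra. The $\bN$-equivariant upgrade works because the construction of Sections \ref{sec:equivcoind} and \ref{sec:theNequivcube} produces $\RRttop$ as a genuine $\bN$-equivariant diagram whose $\torus$-fixed points are the $W$-equivariant diagram $\RRtop$; the $W$-action on both sides is natural in a way that commutes with the derived unit and counit. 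Since weak equivalences of toral $\bN$-spectra are detected on underlying $\torus$-spectra, pointwise equivalence at each vertex yields the required equivalences on modules.

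For the cellularised statement, I would apply the Cellularization Principle \cite{gscell} directly to the Quillen equivalence just established. The sets $\cKS$ and $\cKS^\torus$ are small generators of their respective cellularised categories, and the derived unit is an equivalence on $\cKS^\torus$ and the derived counit is an equivalence on $\cKS$ because the underlying adjunction is already a Quillen equivalence. The main obstacle, as throughout Sections \ref{sec:equivcoind}--\ref{sec:theNequivcube}, is the correct bookkeeping of the residual $W$-action: \cite{gsfixed} is formulated for $\torus$-spectra, and its promotion to $\bN$-spectra rests entirely on the fact that $\RRttop$ really is $\bN$-equivariant rather than merely a $\torus$-equivariant diagram carrying a $W$-action up to homotopy. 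Given this input, the remaining verifications are formal.
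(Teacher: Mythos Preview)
Your proposal is correct and follows essentially the same approach as the paper. Both arguments rest on the observation that weak equivalences in the localised $\bN$-module category are detected by forgetting to $\torus$-spectra (Lemma \ref{lem:Ncubeproperties}), so the known $\torus$-equivariant result lifts; the paper cites the already-packaged diagram version \cite[Theorem 7.6]{tnqcore}, while you unwind this a step further to the underlying \cite{gsfixed} input, but the logic is the same.
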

\begin{proof}
By Lemma \ref{lem:Ncubeproperties} the cube $\RRttop$ forgets to the cube
$\RRti$, so it follows that $\RRtop$ forgets to the cube
$\RR=\RRti^\torus$.
In each category the weak equivalences are defined via a
forgetful functor to a model category constructed from $\torus$--spectra,
we can lift the $\torus$--spectrum equivalence of \cite[Theorem 7.6]{tnqcore}
to the level of $\bN$--spectra.
\end{proof}

The next step is to replace the underlying category $L_{e_1S_\bQ}(\WspO)$
with $W$--objects in $H \bQ$--modules in symmetric spectra:
$H\bQ \leftmod[W]$.
We do so by a series of strong symmetric monoidal Quillen equivalences,
each of which will create a new cube of commutative ring spectra
by applying either the left or right adjoint as appropriate.
For simplicity we will keep the same notation $\RRtop$
for the cube of ring spectra throughout.

The first step is to use  a combination of changing the model
structure and the universe on $W$--spectra gives a Quillen equivalence between
$L_{e_1S_\bQ}(\WspO)$ and $\spO_\bQ[W]$, $W$-objects
in orthogonal spectra.
The key  is to recognise that in both model categories
the weak equivalences are defined by forgetting to non-equivariant spectra.
Thus the equivalence of categories is in this case also a Quillen equivalence.
Moreover it lifts to a Quillen equivalence between
$\RRtop \modin L_{e_1S_\bQ}(\WspO)$ and
$\RRtop \modin \spO_\bQ[W]$.

The second step is the (zig-zag) of Quillen equivalences between
$\spO_\bQ[W]$ and
$H\bQ \leftmod[W]$ given by forgetting to rational
symmetric spectra and applying $(-) \sm H \bQ$
lifts to give a Quillen equivalence between
$\RRtop \modin \spO_\bQ[W]$
and $\RRtop \modin (H\bQ \leftmod[W])$.
We summarise this in the following result.

\begin{prop}
There is a  zig-zag of strong symmetric monoidal Quillen equivalences
\[
\RRtop \modin L_{e_1S_\bQ}(\WspO)
\quillen
\RRtop \modin (H\bQ \leftmod[W]).
\]
This zig-zag induces a
zig-zag of Quillen equivalences
\[
\cKS^\torus \cell \RRtop \modin L_{e_1S_\bQ}(\WspO)
\quillen
\cKS^\torus \cell \RRtop \modin (H\bQ \leftmod[W]).
\]
\end{prop}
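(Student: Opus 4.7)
The plan is to follow the two-step outline sketched in the paragraphs preceding the proposition: first establish a zig-zag of strong symmetric monoidal Quillen equivalences between the ambient categories $L_{e_1S_\bQ}(\WspO)$ and $H\bQ \leftmod[W]$, then transport this zig-zag to the level of modules over $\RRtop$, and finally invoke the Cellularization Principle of \cite{gscell} to obtain the induced statement on cellularizations.

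For the ambient zig-zag, I would proceed through the intermediate categories $\spO_\bQ[W]$ and $Sp^\Sigma_\bQ[W]$ shown in Figure \ref{fig:main}. The first Quillen equivalence $L_{e_1S_\bQ}(\WspO) \quillen \spO_\bQ[W]$ is essentially a combined change of model structure and universe: because $W$ is a finite group and we work rationally, localizing orthogonal $W$-spectra at $e_1 S_\bQ$ produces a model whose weak equivalences are exactly the underlying non-equivariant weak equivalences of $W$-objects, matching the model structure on $\spO_\bQ[W]$. The second step, $\spO_\bQ[W] \quillen Sp^\Sigma_\bQ[W]$, is obtained by lifting the Mandell--May--Schwede--Shipley Quillen equivalence between orthogonal and symmetric spectra to $W$-objects, which is unproblematic because weak equivalences and fibrations on both sides are created by forgetting the $W$-action. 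The third, $Sp^\Sigma_\bQ[W] \quillen H\bQ \leftmod[W]$, is Shipley's rationalization equivalence, again lifted $W$-equivariantly. Each functor in the zig-zag is strong symmetric monoidal, which is the property that will drive the transport to modules.

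To transfer these equivalences to modules over $\RRtop$, at each stage I would apply the left (or right) adjoint objectwise to the cube $\RRtop$ of commutative ring spectra, producing a cube of commutative ring spectra in the next category, which by mild abuse of notation is still denoted $\RRtop$. Transport of model structures on modules along a strong symmetric monoidal Quillen equivalence between sufficiently nice stable model categories (in the spirit of Schwede--Shipley and as used throughout \cite{tnqcore}) then promotes the zig-zag to one between the respective module categories, yielding the first claim. The non-trivial input at this stage is the flatness/cofibrancy needed to ensure $\RRtop$ is sent to an equivalent commutative ring at each step, which is handled by using the positive stable model structure and the commutative ring model structure of Lemma \ref{lem:commdiagram}.

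For the cellularized statement I would invoke the Cellularization Principle of \cite{gscell} at each step of the zig-zag. The cells in $\cKS^\torus$ are homotopically compact generators of the source category by the argument of \cite[Proposition 3.2.5]{BGKSso2} used in the previous proposition, and under the derived functor of each step they are sent (up to weak equivalence) to the corresponding cells on the other side, because the strong symmetric monoidal structure preserves the cube $\RRtop$ and the underlying generators $W/L_+$ for $L\leq \torus$ are preserved. I expect the main technical obstacle to be verifying that homotopical compactness of the cells is preserved through the passage to symmetric spectra and to $H\bQ$-modules, where careful treatment of positive cofibrant replacements of commutative rings and their modules is required; however, since $W$ is finite and all the non-trivial subtleties here are non-equivariant in nature, the arguments used for $\torus$-spectra in \cite{tnqcore} transfer essentially verbatim.
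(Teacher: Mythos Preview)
Your proposal is correct and follows essentially the same approach as the paper: the paper's argument is contained in the two paragraphs immediately preceding the proposition (change of model structure and universe to reach $\spO_\bQ[W]$, then the zig-zag through symmetric spectra to $H\bQ\leftmod[W]$, transporting $\RRtop$ along strong symmetric monoidal adjoints at each step), with the cellularized statement following from the Cellularization Principle as used throughout. If anything, your write-up is more detailed than the paper's own treatment, which states the result as a summary without a separate proof block.
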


\section{Passage to algebra and formality}
\label{sec:spectratoDGAsformal}

We have shown that the category of $\bN$--spectra is equivalent to the
cellularization of modules over a diagram $\RRtop$ of commutative
ring spectra with $W$--action. Recall that this diagram has a shape of a punctured cube.

There are various choices for which category these commutative
rings (i.e. vertices of $\RRtop$) lie in, but we have arranged the rest of the account so
that they are rings in the category $(H\bQ\leftmod)[W]$ of objects of
$H\bQ\leftmod$ with $W$--action. Recall that $W$ is a finite group and
by naturality Shipley's work
\cite{shiHZ} extends to give a symmetric monoidal Quillen equivalence between
$H\bQ\leftmod[W]$ and chain complexes of $\bQ[W]$--modules, denoted $\bQ[W] \leftdmod$.
Moreover, this result implies that a diagram of rational commutative ring
spectra with a $W$--action $\RRtop$ gives rise to a diagram of rational commutative DGAs
$\RRt=\Theta \RRtop$ with $W$--action (due to the functoriality of $\Theta$, which is the derived functor from Shipley's theorem \cite{shiHZ} from topology to algebra).

\begin{prop}\label{prop:alg-qe}
There is a Quillen equivalence
\[
\RRtop \modin (H\bQ\leftmod[W]) \quillen
\RRt \modin \bQ[W] \leftdmod
\]
between the category of module spectra $\RRtopmod$
and the category of differential graded modules $\RRtmod$.
Furthermore, there is a zig-zag of Quillen equivalences
\[
\cKS^\torus \cell \RRtop \modin (H\bQ\leftmod[W]) \quillen
\cKS_t \cell \RRt \modin \bQ[W] \leftdmod
\]
between the cellularizations of these model categories.
\end{prop}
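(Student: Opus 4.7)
The plan is to apply Shipley's symmetric monoidal Quillen equivalence $\Theta \colon H\bQ\leftmod \quillen \bQ\leftdmod$ from \cite{shiHZ} in a structured way: first promote it to $W$--objects, then apply it vertex by vertex to the punctured cube $\RRtop$, then lift it to module categories over the transported cube of rings, and finally verify that the cellularization is transported correctly. Because the preceding proposition has already reduced us to working in $(H\bQ\leftmod)[W]$, everything to be done is purely formal naturality, and no new equivariant spectrum-level input is needed.

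For the first step, since $W$ is a finite group, the zig-zag realising $\Theta$ consists of strong symmetric monoidal Quillen equivalences whose functors commute with the formation of $W$--objects, yielding a zig-zag
\[
H\bQ\leftmod[W] \quillen \bQ[W]\leftdmod.
\]
Applying this zig-zag vertex by vertex to the diagram of commutative rings $\RRtop$ produces a punctured cube $\RRt = \Theta \RRtop$ of commutative DGAs with $W$--action; the structure maps of $\RRt$ are the images of the structure maps of $\RRtop$ up to the intermediate cofibrant/fibrant replacements used in the zig-zag. For the second step, I would lift this to the level of modules over diagrams of rings by the standard Schwede--Shipley machinery (adapted to diagrams of rings as in \cite{tnqcore}): at each stage of the zig-zag of $\Theta$, a Quillen equivalence of monoidal model categories induces a Quillen equivalence on modules over a suitably cofibrant ring (here applied objectwise to each vertex of the cube), and these assemble into a Quillen equivalence on the diagram-injective model structures on modules, since both the weak equivalences and the (acyclic) fibrations are detected vertexwise.

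To pass to the cellularized statement, recall that by Definition \ref{def:cells} and its analogue for $\RRt$, the set $\cKS_t$ is the derived image of $\cKS^\torus$ along the derived composite of left adjoints in the zig-zag. The objects of $\cKS^\torus$ are compact in the homotopy category of $\RRtop \modin (H\bQ\leftmod[W])$ by the argument of \cite[Proposition 3.2.5]{BGKSso2}. Since a Quillen equivalence between stable model categories carrying a set of homotopically compact generators to a set of homotopically compact generators restricts to a Quillen equivalence on the corresponding cellularizations (this is precisely the Cellularization Principle of \cite{gscell}), the second Quillen equivalence follows directly from the first.

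The main obstacle will be handling the zig-zag character of $\Theta$ cleanly in the presence of both the $W$--action and the diagram structure. Shipley's equivalence factors through several intermediate model categories (symmetric spectra in simplicial modules and the like), and at each intermediate stage one must ensure that the $W$--equivariance is preserved strictly enough to remain inside a model category of $W$--equivariant commutative rings, and that the cube $\RRtop$ transports to an honest cube of rings in the intermediate category rather than just a diagram up to homotopy. This is essentially a naturality bookkeeping argument, parallel to the non-equivariant treatment in \cite{tnqcore}, but executing it requires carefully tracking derived and point-set functors and matching up cofibrant/fibrant replacements of rings at each intermediate step so that the module category comparison at each stage is genuinely a Quillen equivalence and the resulting diagram of DGAs $\RRt$ is well defined up to the expected zig-zag.
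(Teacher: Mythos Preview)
Your proposal is correct and matches the paper's approach essentially line for line: extend Shipley's monoidal equivalence to $W$--objects by naturality, transport the diagram of commutative rings $\RRtop$ to $\RRt=\Theta\RRtop$, lift to module categories over the diagram, and invoke the Cellularization Principle of \cite{gscell} for the second statement. The paper's own treatment is in fact terser than yours---it relegates the first statement to the setup paragraph preceding the proposition and dispatches the second in a single sentence citing \cite[Corollary 2.8]{gscell}---so the bookkeeping concerns you flag at the end are exactly the points the paper leaves implicit.
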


The second statement in the proposition uses the Cellularization Principle,
\cite[Corollary 2.8]{gscell} with $\cKS_t$ the (derived) image of $\cKS^\torus$ under the Quillen
equivalences of the first part of the statement.

We have shown that the category of rational $\bN$--spectra is equivalent
to the cellularization of modules over a suitable diagram of
commutative DGAs in $\bQ [W]$--modules. On the other hand, we know very little about the
diagram except its homology and that the terms are commutative. The
purpose of this section is to show that this determines the diagram up
to equivalence.

The structure of the argument is precisely the same  as in
\cite[Section 9]{tnqcore}, but  we need to ensure  that the maps may be taken
to be $W$--equivariant.

All the homologies are constructed from cohomology rings $H^*(BK)$ for
$K\subseteq T$. As a ring this is polynomial,  but it has an
action of the subgroup $W_K$ of $W$ fixing $K$.  Taking both
structures into account, it is the symmetric algebra on a finite
dimensional rational  representation $U_K$ of $W_K$:
$$H^*(BK)=\mathrm{Symm}(U_K). $$
Whenever $H^*(BK)$ occurs in $\RRa:=H_*(\RRt)=H_*(\Theta \RRtop)$ it comes in
a product over subgroups of that dimension, and in that product
all conjugates $H^*(BK^w)$ also occur. Taking into account the $W$
action this gives the $\Q W$--algebra
$$\Hom_{W_K}(W, H^*(BK))=\Hom_{W_K}(W, \mathrm{Symm (U_K)}). $$

\begin{lem}\label{lem:eqform}
The $W$-twisted commutative ring $\Hom_{W_K}(W, \mathrm{Symm (U_K)})$ is
 strongly intrinsically formal in the sense that if $A$ is another
 such DGA with this cohomology, then there is a homology
 isomorphism
$$\Hom_{W_K}(W, \mathrm{Symm (U_K)}) \stackrel{\simeq}\lra A. $$

\end{lem}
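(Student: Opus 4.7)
The plan follows the classical intrinsic formality argument for a polynomial algebra, carried out so that every step is natural in the $W$--action; the key enabling fact is that $\bQ[W]$ is semisimple (Maschke's theorem), which lets us choose all lifts and sections equivariantly.

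First I would reduce to the case where $A$ is concentrated in nonnegative cohomological degrees. Since the cohomology ring $R:=\Hom_{W_K}(W,\mathrm{Symm}(U_K))$ is nonnegatively graded, we may replace $A$ by its connective cover $\tau_{\geq 0}A$ (a $W$--equivariant quasi-isomorphic sub-cDGA). Under this assumption the degree--zero part $A^0$ coincides with $Z^0(A)=H^0(A)=\bQ[W/W_K]$, since there are no boundaries in degree zero. Consequently the canonical orthogonal system of idempotents $\{e_\gamma\}_{\gamma\in W/W_K}\subset H^0(A)$ lifts automatically to a $W$--equivariant orthogonal system of idempotents inside $A^0$ itself, avoiding any Newton--iteration gymnastics.

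Second, these idempotents decompose $A$ as a finite product $\prod_{\gamma\in W/W_K} A_\gamma$ of commutative DGAs, on which $W$ acts by permuting the factors transitively; the stabilizer $W_K$ of the basepoint factor acts on $A_e$, making $A_e$ a $W_K$--equivariant cDGA whose cohomology is $\mathrm{Symm}(U_K)$. It thus suffices to construct a $W_K$--equivariant quasi-isomorphism $\phi_e:\mathrm{Symm}(U_K)\to A_e$ and then spread it over the remaining factors by translation.

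Third, the surjection $Z^{*}(A_e)\twoheadrightarrow H^{*}(A_e)$ is a surjection of $\bQ[W_K]$--modules; by Maschke's theorem it admits a $W_K$--equivariant section over the generating subspace $U_K\subset H^{*}(A_e)$. The resulting $W_K$--equivariant cycle lift $\widetilde{U_K}\subset A_e$ lies in a single even cohomological degree, so by the universal property of the symmetric algebra in the category of $W_K$--equivariant commutative DGAs it extends uniquely to a $W_K$--equivariant cDGA morphism $\phi_e:\mathrm{Symm}(U_K)\to A_e$. By construction $\phi_e$ induces the identity on polynomial generators, hence an isomorphism on cohomology. Transporting $\phi_e$ by chosen coset representatives of $W/W_K$ and taking the product produces the required $W$--equivariant homology isomorphism $R\to A$.

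The substantive point, and the only place where the argument differs from the non-equivariant formality of a polynomial ring as in \cite[Section 9]{tnqcore}, is the simultaneous equivariance of the lifts in the previous step. This is the main obstacle, and it is handled cleanly by the semisimplicity of $\bQ[W_K]$; this is also the only use of the characteristic--zero hypothesis in the proof.
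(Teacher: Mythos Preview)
Your proof is correct and follows the same core idea as the paper: use Maschke's theorem to lift generators equivariantly into the cycles, then extend via the universal property of the symmetric algebra. The paper's argument is terser: it lifts the whole $W$--module $\Hom_{W_K}(W,U_K)$ into $Z(A)$ by a single Maschke splitting of $Z(A)\twoheadrightarrow H_*(A)\twoheadrightarrow \Hom_{W_K}(W,U_K)$, restricts to a $W_K$--map $U_K\to Z(A)$, extends to a $W_K$--algebra map from $\mathrm{Symm}(U_K)$, and then asserts the extension to a $\bQ[W]$--algebra map from the coinduced ring. Your route unpacks this last extension: you first lift the degree--zero idempotents (after replacing $A$ by a connective model) to split $A$ as a $W$--permuted product, build the map on the identity factor, and then transport. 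This makes the final ``extend to a $\bQ[W]$--algebra map'' step completely transparent, at the cost of the extra reduction; the paper leaves the idempotent issue implicit, which is harmless in context since the DGAs arising there come from connective ring spectra.

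One small technical point: with the cohomological grading you use, the naive truncation $(\tau_{\geq 0}A)^n=A^n$ for $n\geq 0$ is \emph{not} a quasi-isomorphic sub-cDGA (one gets $H^0(\tau_{\geq 0}A)=Z^0(A)$ rather than $Z^0(A)/B^0(A)$). The sub-cDGA truncation that does work is the homological one the paper implicitly uses, namely $(\tau_{\geq 0}A)_n=A_n$ for $n>0$, $(\tau_{\geq 0}A)_0=Z_0(A)$, and zero below; this is closed under products and gives the right $H_0$. With that adjustment your argument goes through as written.
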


\begin{proof}
First note that $H_*(A)=\Hom_{W_K}(W, \mathrm{Symm (U_K)})$ has a quotient
module $\Hom_{W_K}(W, U_K)$, and hence there is an epimorphism
$$Z(A)\lra Z(A)/B(A)=H_*(A)\lra \Hom_{W_K}(W, U_K). $$
By Maschke's Theorem this is split and we may choose a submodule
$$\Hom_{W_K}(W, U_K)\subseteq Z(A).  $$
The $\bQ W_K$--module map $U_K\lra Z(A)$ extends to a  map
$$\mathrm{Symm}(U_K) \lra Z(A)$$
of commutative $W_K$--algebras which we may extend to $\bQ [W]$--algebra map
$$\Hom_{W_K}(W, \mathrm{Symm (U_K)}) \lra Z(A)$$
inducing a homology isomorphism as required.
\end{proof}

After this lemma, the rest of the strategy of \cite{tnqcore} can be implemented in the
same way. There are two points worth commenting on. Firstly,
that whenever a factor $H^*(BK)$ occurs, so do all
$H^*(BK^w)$. Secondly, whenever a set $\mcE_K$  of Euler classes
is inverted, we may use representations fixed by $W_K$ (for example by inducing
the representations $V$ of $T$ with $V^K=0$ to $\Wt_K$).
We illustrate the method with the two smallest examples.

\begin{example}[The torus of rank 1]

Note that in this case the only possible Weyl group $W$ is of order
2. This necessarily normalizes every subgroup, so that every term in the product
$\prod_nH^*(B\torus/C_n)$ is invariant. The only change brought about
by the action is that $W$ negates the polynomial generator in the
terms $H^*(B\torus/C_n)$.

After that, the argument proceeds precisely as in the torus case from \cite{tnqcore}. We start with the cofibrant diagram $\RRt$ of commutative rings as in the top row. Extending along the top left hand
vertical we form the second row. The upward maps from the two outer vertices
of $\RRa$ on the bottom row
can then be defined. The Euler classes are defined by the image of $\RRa(0,1)$, and those are
inverted to form the third row, after which the middle vertical can be
filled in.
$$\diagram
\RRt \dto  &\RRt  (0,1) \rto \dto & \RRt (1,1) \dto & \RRt(1,0) \lto
\dto\\
\RRhat_1 \dto &\prod_i\RRt (0,1)_i \rto \dto_= & \prod_i \RRt(0,1)_i
\tensor_{\RRt (0,1)}\RRt (1,1) \dto &\RRt (1,0) \lto \dto^=\\
\RRhat_2& \prod_i\RRt (0,1)_i \rto  & \cEi \prod_i \RRt(0,1)_i
\tensor_{\RRt (0,1)}\RRt (1,1)  &\RRt (1,0) \lto \\
\RRa \uto &\cOcF \rto \uto & \cEi \cOcF \uto &\bQ \lto \uto\\
&&&\\
\RRtop& (D\efp)^\torus\rto &(\siftyV{\torus}\sm D\efp)^\torus&(\siftyV{\torus})^\torus\lto\\
\enddiagram$$
\end{example}

\begin{example}[The torus of rank 2]

In this case there are various possible Weyl groups, and most of them
permute both the finite subgroups and those of dimension 1. For
example if $G=SU(3)$, the Weyl group $W\cong \Sigma_3$ acts on $L\torus$ as
the reduced natural representation. Amongst subgroups of the maximal
torus, the only proper non-trivial subgroup fixed by $W$ is the central
subgroup of order 3, and in most cases, where $W_K\neq1$ it acts non-trivially on
$H^*(B\torus/K)$. Nonetheless, this additional structure is entirely
compatible with the formality argument from \cite{tnqcore}.

It is too typographically complicated to display the full argument in
the way we did for rank 1, but it still seems worth displaying $\RRa$
and $\RRttop$ (recall that $\RRtop=\RRttop^\torus$).
In the process of proving formality we will need to change individual rings in the diagram. Such a change at a given place $i$ of a diagram affects rings in places of the diagram that receive a map from $i$.
For example a change of the ring at the top vertex only affects the three other points not on the bottom face, and then the change of the ring at the middle vertex on the bottom face only affects the central vertex.

\resizebox{0.95\textwidth}{!}{
$$
\diagram
&&\prod_F\bQ[c,d] \drto \dlto&&\\
&\prod_H \cEi_H\prod_F\bQ[c,d] \drto && \cEi_G\prod_F\bQ[c,d] \dlto &\\
&&\cEi_G \prod_H \cEi_H\prod_F\bQ[c,d] &&\\
\prod_H\prod_{\tilde{H}}\bQ[c] \uurto \rrto &&
\cEi_G \prod_H\prod_{\tilde{H}}\bQ [c] \uto &&\bQ \llto \uulto
\enddiagram
$$
}

\vspace{2ex}
The subgroups  $F$ run through finite subgroups, the subgroups $H$ run
through circle subgroups, and the subgroups $\tilde{H}$ run through
subgroups with identity component $H$. The polynomial rings $\bQ [c,d]$
are the cohomology rings of $B(G/F)$ (all different but isomorphic),
and the polynomial rings $\bQ [c]$ are the cohomology rings of
$B(G/\tilde{H})$. The polynomial ring $\bQ$ is the cohomology ring of
$B(G/G)$.

The above diagram is obtained by taking homotopy groups of the
following diagram $\RRttop$  of ring $G$--spectra which we display below.

\vspace{2ex}

\resizebox{0.95\textwidth}{!}{

$$
\diagram
&&D\efp \drto \dlto&&\\
&\prod_H \siftyV{H}\sm D\efp \drto &&      \siftyV{G}\sm D\efp \dlto &\\
&&\siftyV{G}\sm \prod_H \siftyV{H}\sm D\efp &&\\
\prod_H\siftyV{H}\sm D\efhp \uurto \rrto &&
\siftyV{G}\sm \prod_H\siftyV{H}\sm D\efhp \uto &&\siftyV{G} \llto \uulto
\enddiagram
$$
}
\vspace{3ex}
\end{example}

In the case of toral $\bN$--spectra we follow the above strategy using
Lemma \ref{lem:eqform} where necessary.

\begin{prop}\label{prop:alg-formality}
The formality argument above gives a zig-zag of
maps of diagrams of commutative rings with $W$--action,
that are objectwise homology equivalences, between
\[
\RRa =\pi^\torus_*(\RRttop)=\pi_*(\RRtop)=H_*(\RRt)
\]
and $\RRt$.
Hence there is a zig-zag of Quillen equivalences
\[
\RRt \modin \bQ[W] \leftdmod \quillen
\RRa \modin \bQ[W] \leftdmod.
\]
\end{prop}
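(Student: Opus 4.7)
The plan is to run the formality argument from \cite[Section 9]{tnqcore} verbatim, but at every step where one chooses a splitting of a module, a lift of an algebra map, or a set of Euler classes, insist on working inside $\bQ[W] \leftdmod$ (respectively, commutative $\bQ[W]$--algebras). The key new input that makes this upgrade automatic is Lemma \ref{lem:eqform}, which provides $W_K$--equivariant formality for each building-block algebra $\mathrm{Symm}(U_K)$, and hence (by applying $\Hom_{W_K}(W, -)$) $W$--equivariant formality for each factor appearing in $\RRa$.

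First I would replace $\RRt$ by a cofibrant diagram of commutative rings in $\bQ[W] \leftdmod$, and then proceed vertex by vertex through the punctured cube $\PCf$, starting from the terminal vertex where no Euler classes are inverted and ending at the vertices involving all successive inversions. At each stage one has an intermediate diagram of commutative DGAs together with objectwise quasi-isomorphisms from and to it relating $\RRt$ and $\RRa$ so far, and one must extend the zig-zag to the next vertex. The extension uses three ingredients: (i) $W$--orbits of subgroups of a given dimension, handled uniformly by $\Hom_{W_K}(W, -)$; (ii) inflation maps $DE\mcF/K_+ \to DE\mcF/L_+$ which on homology are just the canonical maps $\mathrm{Symm}(U_K) \to \mathrm{Symm}(U_L)$ and are manifestly $W$--equivariant over $W_K \cap W_L$ by the discussion in Subsection \ref{subsec:inflcomp}; and (iii) localization at a set $\mcE_K$ of Euler classes chosen $W_K$--equivariantly (e.g.\ coming from representations $V$ with $V^K=0$ induced up to $\Wt_K$), which commutes with homology and preserves quasi-isomorphisms of cofibrant commutative DGAs.

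The main obstacle will be ensuring that the Maschke splittings used in Lemma \ref{lem:eqform} can be chosen compatibly across the cube, so that the face maps of the graded diagram $\RRa$ lift to face maps of an intermediate DGA diagram. As in the torus case this is handled by the standard staging device illustrated in the rank 1 example: one first fixes the splittings, and hence the algebra maps $\mathrm{Symm}(U_K) \to Z(A)$, at the vertices of $\RRt$ with no Euler classes inverted; then one defines the Euler classes in the inverted rings as the images of the already-fixed generators under the zig-zag; and only then fills in the central vertex. Because the face maps from the top face to the bottom face of the cube are inflations followed by localizations, and both operations are natural in the $W$--action, no new choices are forced on the bottom face, and the rank 2 example displayed in the excerpt shows the pattern extends without modification.

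Once the objectwise homology equivalence between $\RRt$ and $\RRa$ as diagrams of commutative rings in $\bQ[W] \leftdmod$ is in hand, the Quillen equivalence
\[
\RRt \modin \bQ[W] \leftdmod \quillen \RRa \modin \bQ[W] \leftdmod
\]
follows from the standard fact that a zig-zag of quasi-isomorphisms between cofibrant diagrams of DGAs induces a zig-zag of Quillen equivalences between their categories of module diagrams (with model structure created from the underlying category of $\bQ[W]$--chain complexes). This step is formal given the vertex-by-vertex work above, and no additional equivariant subtlety arises because the restriction of scalars functors along our quasi-isomorphisms are already $\bQ[W]$--linear by construction.
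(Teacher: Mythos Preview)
Your proposal is correct and follows essentially the same approach as the paper: both rely on importing the staged formality argument of \cite[Section 9]{tnqcore} with Lemma~\ref{lem:eqform} supplying the $W$--equivariant splittings, and both single out the two places where $W$--equivariance needs care, namely grouping $W$--orbits of subgroups via $\Hom_{W_K}(W,-)$ and choosing the inverted Euler classes to come from $W_K$--invariant representations. One small slip: the vertex with no Euler classes inverted (e.g.\ $(0,\ldots,0,1)$, corresponding to $D\efp$) is an \emph{initial} vertex of the punctured cube, not the terminal one; the staging begins there and propagates toward the terminal vertex $(1,\ldots,1)$ where all localizations have been performed, as the rank~1 display makes clear.
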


Once again we can apply the derived functors of the above zig-zag
of Quillen equivalences to obtain a set of cells $\cKS_a$ and we get the following
corollary.

\begin{cor}
There is a zig-zag of Quillen equivalences
\[
\cKS_t \cell \RRt \modin \bQ[W] \leftdmod \quillen
\cKS_a \cell \RRa \modin \bQ[W] \leftdmod.
\]
between the cellularizations of the model categories
in Proposition \ref{prop:alg-formality}.
\end{cor}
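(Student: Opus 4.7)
The plan is to apply the Cellularization Principle of \cite{gscell} (specifically \cite[Corollary 2.8]{gscell}), exactly as was done in Proposition \ref{prop:alg-qe} to pass from the underlying Quillen equivalence to the cellularized one. Starting from the zig-zag of Quillen equivalences
\[
\RRt \modin \bQ[W] \leftdmod \quillen \RRa \modin \bQ[W] \leftdmod
\]
provided by Proposition \ref{prop:alg-formality}, one defines $\cKS_a$ to be the (derived) image of $\cKS_t$ under the composite of left derived functors of this zig-zag. With this definition, each individual Quillen equivalence in the zig-zag, restricted to cellularizations at the matching set of cells on each side, becomes a Quillen equivalence by the Cellularization Principle.

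The steps I would carry out, in order: first, spell out the (finite) zig-zag of Proposition \ref{prop:alg-formality} and fix, at each intermediate model category, the derived image of $\cKS_t$ along the relevant composite. Second, verify the hypotheses of the Cellularization Principle for each Quillen pair in the zig-zag: namely that the cells used are homotopically small (compact) in the homotopy categories of both source and target, and that the (derived) unit (or counit, depending on the direction of each leg) is a weak equivalence on the cells. The first condition is automatic: $\cKS_t$ is obtained by successive application of strong monoidal left Quillen functors starting from the generating $\bN$--cells, and homotopical compactness is preserved under such operations in module categories over (diagrams of) rings, and passes through homology isomorphisms between commutative rings. The second condition is built into the fact that we began with a Quillen equivalence between the full module categories. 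Third, assemble the individual cellularized Quillen equivalences into a single zig-zag, yielding the desired statement.

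The only mildly delicate point, which I expect to be the main obstacle, is bookkeeping rather than substance: at each step of the zig-zag one must be sure to transport $\cKS_t$ using the correct (left or right) derived functor so that the set of cells on each side genuinely corresponds under the underlying equivalence of homotopy categories, and so that compactness is preserved. Once this is handled — and given that the zig-zag of Proposition \ref{prop:alg-formality} is built from objectwise quasi-isomorphisms of diagrams of commutative rings, so that extension and restriction of scalars along each leg are already known to form Quillen equivalences of module categories — the Cellularization Principle applies verbatim and produces the stated zig-zag of Quillen equivalences between $\cKS_t \cell \RRt \modin \bQ[W] \leftdmod$ and $\cKS_a \cell \RRa \modin \bQ[W] \leftdmod$.
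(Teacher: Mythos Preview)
Your proposal is correct and follows exactly the approach the paper takes: it defines $\cKS_a$ as the derived image of $\cKS_t$ along the zig-zag of Proposition \ref{prop:alg-formality} and then invokes the Cellularization Principle \cite[Corollary 2.8]{gscell}, just as was done in Proposition \ref{prop:alg-qe}. If anything, you have spelled out more detail than the paper does; the paper treats this corollary as an immediate consequence and gives no separate proof beyond the one-line remark preceding the statement.
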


\section{The algebraic models}

The remainder of the paper focuses on simplifying the category
$\cKS_a \cell \RRa \modin \bQ[W] \leftdmod$. We begin by introducing
the algebraic structures needed for this simplification.
More details can be found in
\cite{tnqcore}, \cite{tnq3} and \cite{AGtoral}.

Over the years different, but equivalent algebraic models were defined
for the category of rational torus spectra. The point of this section
is not to provide details on them, as that was already done in
\cite{tnqcore} and \cite{tnq3}, but to give some intuition on how they
are built.  In summary this section states that all the constructions
of \cite{tnqcore} are well behaved and compatible with the additional Weyl group
actions. Although this is true and rather elementary, it takes some
time to and notation to explain. Accordingly, we have cut this section to the minimum,
leaving only enough for the reader to be able to cross-check with
\cite{tnq3} and  verify claims made elsewhere in this paper.

The important thing to keep in mind is that an algebraic model for
rational torus spectra is a \textbf{special} full subcategory of the
category of modules over a diagram of rings. Variations on the
algebraic model come from different choices of the shape of this
diagram (and corresponding changes in the rings) and we will discuss some of the possibilities below.
All the equivalences of categories linking different choices of the algebraic models are given by exact functors and thus having an injective model structure on one of the choices of the algebraic model gives Quillen equivalent injective model structures on all the others, using the left-lifting technique of \cite{HKRS} and the fact that cofibrations are exactly the monomorphisms and weak equivalences are exactly homology isomorphisms.

When we are interested in the algebraic model for rational toral
$\bN$--spectra (where the maximal torus $\torus$ is normal in $\bN$
with $W=\bN/\torus$) then all different choices of the algebraic
models for the torus can be ``upgraded'' to include the $W$--action
and model rational toral $\bN$--spectra (see \cite{AGtoral}). Some of
these models are easier to construct than the others and that has to
do with the fact that $W$ in some cases might act on the diagram of
rings itself. However our approach is to start from the easiest case,
where $W$ fixes the objects of the diagram and acts on the ring at each point of the diagram. This category is denoted by $d \acal_d^f(\bN, toral)$ (here the subscript and superscript indicate how the diagram of rings is built, as we explain below).

As in the case of a torus, the injective model structure again exists on all equivalent categories. It is in fact left-lifted from the case of the torus using the forgetful functor forgetting the $W$--action and landing in the algebraic model for a torus. These forgetful functors all preserve limits and colimits and since all the categories discussed here are locally presentable they have both adjoints, thus they can be used to left-lift the model structures.

Unfortunately, the situation gets much more complicated when one is interested in the algebraic model for rational toral $G$--spectra, where the maximal torus $\torus$ is not necessary normal in $G$. The paper \cite{AGtoral} presents such a model, but this time only one option for the diagram of rings is available, due to the complications coming from different Weyl group actions acting in different places of a diagram. This algebraic model is denoted by $d \acal_a^f(G, toral)$.

The strategy is to give an algebraic model for rational toral $\bN$--spectra and then use the final passage of restriction and coinduction both in topology and in algebra to provide an algebraic model for rational toral $G$--spectra. To do that step in algebra however, both categories have to be built using the same shape of diagram of rings, because only then the comparison functors are defined in \cite{AGtoral}. Thus we need to use a different (but equivalent and Quillen equivalent) algebraic model for rational toral $\bN$--spectra, to the one mentioned above, namely  $d \acal_a^f(\bN, toral)$.  For this reason we recall below several different algebraic models for rational $\torus$--spectra and rational toral $\bN$--spectra and adjoint pairs between them. We begin with the case of a torus.

\subsection{Models for the torus}
Following \cite{tnq3}, we introduce a number of terms, leading to the algebraic model for the torus.
The basic idea is that we are considering categories of modules over diagrams of rings.
We consider a number of diagram shapes.
\begin{itemize}
\item $\Sigma_c$ the diagram of connected subgroups of $\bT$.
\item $\Sigma_d$ the poset $\{ 0,1, \dots, r \}$.
\item $\Sigma_a$ the category of all subgroups of $\bT$ with cotoral inclusions, i.e. the inclusions $K\lra H$ such that $H/K$ is a torus.
\end{itemize}

The subscript ``c'' goes for connected, ``d'' for dimension and ``a'' for all subgroups.

We make contravariant diagrams of rings on these categories.
\begin{itemize}
\item $R_a$ given by $R_a(G/K) = H^*(B \bT/K)$.
\item $R_c$ given by $R_c(G/K) = \prod_{\bar{K} \in \mathcal{F}/K}  H^*(B \bT/\bar{K})$.
\item $R_d$ given by $R_d(m) = \prod_{\dim(\bar{K})= m} H^*(B \bT/\bar{K})$.
\end{itemize}
The maps in these diagrams are all built from the maps
$H^*(B\torus/K) \to H^*(B\torus/L)$ for $L \subseteq K \subseteq \torus$.

For each of the above posets, we have an associated poset of flags:
finite length sequences of elements in decreasing order.
We denote these by $\Sigma_c^f$,  $\Sigma_d^f$ and $\Sigma_a^f$.
We also have poset diagrams of pairs (flags of length 2)
$\Sigma_c^p$,  $\Sigma_d^p$ and $\Sigma_a^p$.
We may also extend our diagrams of rings to flag posets (and pair posets), we refer the reader to \cite{tnq3} for details.

In the earlier sections our diagram was always
a punctured cube of dimension $d$, we can now recognise this as
$\Sigma^f_d$, the poset of flags on non-empty subsets
of $\{0,1, \dots, r \}$ and the diagram of rings $\RRa$ as
$R^f_d$.

To obtain the algebraic model, we add restrictions on to the kinds of modules that we consider.
The starting point is that modules must be \textbf{q}uasi-\textbf{c}oherent and
\textbf{e}xtended (qce) and
$\mathcal{F}$-continuous. These are substantial restrictions on the
type of objects that can arise. The precise definitions of these terms
are given in \cite{tnq3}, but understanding these terms is not required
for the current paper.

\begin{rem}Once we fix a diagram of rings, say  $\Sigma^f_a$, the algebraic model for the rational $\torus$--spectra is given by the full subcategory on $\mathcal{F}$-continuous qce differential modules over the diagram of rings $R^f_a$. We denote it by $d \acal_a^f(\bT)$. Thus we have
\[
d \acal_a^f(\bT) = qce \endash R_a^f \leftdmod
\]
\end{rem}

Similarly, we can define other algebraic models for the case of the torus.
We summarise the various categories that will appear in later sections. For more details (including what ``pqce'' means below) see \cite{tnq3}.
\begin{itemize}
\item $d \acal_a^p(\bT)$ is the category of $\mathcal{F}$-continuous qce
differential modules over $R_a^p$, where the diagram is pairs in $\Sigma_a$.
\item $d \acal_a^f(\bT)$ is the category of $\mathcal{F}$-continuous qce
differential modules over $R_a^f$, where the diagram is flags in $\Sigma_a$.
\item $d \acal_c^p(\bT)$ is the category of qce
differential modules over $R_c^p$, where the diagram is pairs in $\Sigma_c$.
\item $d \acal_c^f(\bT)$ is the category of qce
differential modules over $R_c^f$, where the diagram is flags in $\Sigma_c$.
\item $d \acal_d^f(\bT)$ is the category of pqce
differential modules over $R_d^f$, where the diagram is flags in $\Sigma_d$.
\end{itemize}

As discussed at the start of the section, we need to know that the
algebraic models for the torus are locally presentable.

\begin{lem}\label{lem:locPresT} All algebraic models for the torus are locally presentable categories.
\end{lem}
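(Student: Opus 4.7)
The plan is to show that each of the five listed algebraic models is a Grothendieck abelian category, and then invoke the standard fact that every Grothendieck abelian category is locally presentable (see, e.g., the Gabriel--Popescu theorem together with the Adámek--Rosický characterisation via accessibility).

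First I would observe that the \emph{unrestricted} category of differential modules over any of the diagrams of rings $R_a^p$, $R_a^f$, $R_c^p$, $R_c^f$, or $R_d^f$ is a Grothendieck abelian category: modules over a diagram of rings form a category of additive presheaves on a ringoid, and chain complexes therein constitute a Grothendieck category with small generators given by (shifted) representable modules at each vertex of the diagram. Next one must show that the full subcategory cut out by the qce (resp.\ pqce, resp.\ $\mathcal{F}$-continuous qce) condition is itself Grothendieck abelian. Each such restriction is of ``sheaf type'': either the structure map at a vertex exhibits the value there as an extension of scalars, or the value at a subgroup is recovered as a limit over cotoral neighbours. These conditions are closed under kernels, cokernels, arbitrary coproducts and sufficiently filtered colimits, because the relevant localisations are exact and the $\mathcal{F}$-continuous limits commute with filtered colimits beyond a fixed cardinal. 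Generators in each case are supplied by the (qce-ified or continuous-completed) images of the representable diagrams, as described in \cite{tnq3}.

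A more economical route, which I would take in practice, is to use the equivalences of abelian categories between the five models that are established in \cite{tnq3}. It then suffices to prove local presentability for one of them — the simplest choice being $d\acal_d^f(\bT)$, whose diagram shape is the finite poset $\{0,1,\dots,r\}$ — and to transport the property across the equivalences. For $d\acal_d^f(\bT)$ the underlying diagram is finite, and the pqce restriction on $R_d^f$--modules can be verified directly to be an accessibly embedded, reflective subcategory of the ambient differential module category, giving local presentability at once.

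The main obstacle is the accessibility of the defining restrictions: namely, that qce, pqce, and $\mathcal{F}$-continuity define subcategories closed under $\kappa$-filtered colimits in the ambient category of differential modules for a suitable regular cardinal $\kappa$. For the algebraic conditions (qce and pqce) this is immediate because they are expressed by equalities of certain canonical maps that are preserved by all colimits. For the $\mathcal{F}$-continuity condition one needs that limits along the (countable) poset $\mathcal{F}$ commute with $\kappa$-filtered colimits for $\kappa > \aleph_0$, which is a standard property of such limit conditions.
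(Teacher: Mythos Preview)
Your approach is genuinely different from the paper's. The paper works concretely with $\mcA_c^p(\bT)$: since the category is already known to be abelian with exact filtered colimits, the entire content is producing a \emph{set} of generators. This is done by showing that every element $x\in X(1)$ is contained in a subobject that is ``$\phi$-pointwise countably generated'' (i.e.\ $\phi^K$ of it is countably generated over $\cOcFK$ for each connected $K$), and that such objects form a set. The construction is delicate: one first chooses finitely many elements at each stage of each complete flag to capture the images under the structure maps $\beta$, and then enlarges by further elements $y(F,i,j)$ to restore quasi-coherence. The other models are then handled via the exact equivalences. The paper even remarks afterward that it is unknown whether $\phi$-\emph{finitely} generated subobjects suffice in rank $\geq 2$, which indicates the question is not a formality.

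Your abstract route via Grothendieck categories is sound in outline, but the step you treat as routine is precisely the one the paper works for. Closure of the qce (or pqce) subcategory under colimits is fine --- the conditions are phrased via localisations and tensor products, which are cocontinuous --- but this alone does not yield accessibility or a generator. Your phrase ``generators are supplied by the qce-ified images of the representable diagrams'' presupposes either a reflection (left adjoint to the inclusion) or an explicit supply of small qce objects, neither of which you establish. The suggestion to reduce to $d\mcA_d^f(\bT)$ because $\Sigma_d^f$ is a finite poset does not obviously help: the ``p'' in pqce encodes conditions on infinite products over the (infinitely many) subgroups at each dimension, so the finiteness of the diagram shape does not make the subcategory condition finitary. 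In short, your framework is correct but you have deferred rather than discharged the one nontrivial obligation --- exhibiting a set of generators --- which is exactly what the paper's Claim~2 supplies.
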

\begin{proof} 
We prove that $ \acal_c^p(\bT)$ is locally presentable. 
This category is abelian by \cite{tnq3} so it suffices to demonstrate that the
category has a {\em set} of generators
$T_i$. That is to say, given two objects $X,Y$ and two maps $f, g: X\lra Y$ with $f\neq g$
there is an $i$ and a map  $T_i\lra X$ so that the composite with the two maps $f $ and $g$ are distinct.

By quasi-coherence, maps are determined by the part at the trivial group $1$. In fact we will show that
there is a set of modules $T_i$ so that any element $x\in X(1)$ lies in a submodule isomorphic to some $T_i$.
In fact we will show that $x$ lies in a submodule $T$ of $X$ which is {\em $\phi$-pointwise countably generated} in the sense that for each connected subgroup $K$ of $\torus$  that $\phi^KT$ is a finitely generated
module over $\cOcFK$. Because of quasi-coherence, this involves a little care, and we return to it after
making an estimate. We will not attempt to be economical in our estimates!

\vspace{2ex}
\noindent\textbf{Claim 1:}  The collection of $\phi$-pointwise countably generated objects of $ \acal_c^p(\bT)$ up to isomorphism forms a set.
\vspace{2ex}

If $R$ is any ring (such as $\cOcFK$) the collection of modules $M$ generated by $n$-elements is a set, because there is a short exact sequence
$$\bigoplus_{i\in I} R\stackrel{p}\lra \bigoplus_1^nR \lra M\lra 0. $$
Since $R$ is a set, the number of submodules is a set, and so we may use subsets of this as our indexing sets $I$. The collection of possible maps is $\prod_iR$, and hence  a set.
Taking the union of these sets as $n$ runs through the natural numbers and a countable set we find a set of countably generated $R$-modules.

At each point $K$ in the diagram this argument applies to show there is a set of countably generated $\cOcFK$-modules $\phi^K T$. There are countably many subgroups $K$ so a choice for a $\phi^K T$ at each $K$ is still a set. Finally, these are related. In fact if  $L\subseteq K$ there is a $\cOcFL$-map
$$\phi^L T\lra \cEi_{K/L}\cOcFL\tensor_{\cOcFK}\phi^KT. $$
There is only a set of such maps. Of course only a subset will satisfy the quasi-coherence and extendedness condition, but we just needed an estimate.

\vspace{2ex}
\noindent\textbf{Claim 2:}  Given any object $X$, element $x \in X(1)$ lies in a $\phi$-pointwise countably generated subobject of $X$.
\vspace{2ex}

We start with $x\in X(1)$ and we will construct a $\phi$-pointwise finitely generated submodule
$Z\subseteq X$ containing $x\in X(1)$. For each connected subgroup $K$ we have
$$\beta_1^K(x) =\Sigma_{i=1}^{n_K}\lambda^K_i\tensor x^K_i$$
for suitable numbers $n_K$ and  elements $\lambda^K_i \in \cEi_K\cOcF, x^K_i \in \phi^KX$. Roughly speaking we want to use the $x^K_i$ as our first guess for the generators of $\phi^KZ$, but to allow for the possibility of some cancellation, we do something a little more elaborate.
In fact we will get possibly different sets of generators for each complete flag $F=(1=K_0\subset K_1 \subset \cdots \subset K_s=K)$ ending at $K$. Note that by completeness,  $K_i$ is of dimension $i$.
Now we proceed one step at a time with
$$\beta_1^{K_1}(x) =\Sigma_{i=1}^{n_{F,1}}\lambda^{F,1}_i\tensor x^{F,1}_i$$
and for each term $x^{F,1}_i$
$$\beta_{K_1}^{K_2}(x^{F,1}_i) =\Sigma_{j=1}^{n_{F,2}}\lambda^{F,2,i}_j\tensor x^{F,2}_j, $$
where $n_{F,2}$ is taken large enough to deal with all the finite number of terms $x^{F,1}_i$.  The remaining steps are similar. The first  approximation to $\phi^KZ$ is the countably generated $\cOcFK$-submodule
$$\phi^KZ'=\langle x^{F,i}_j\st F, i,j\rangle \subseteq \phi^KX,  $$
where $F$ runs through all flags from $1$ to $K$, $1\leq i \leq dim K$ and
$1\leq j \leq n_{F,i}$. To obtain something extended we take $Z'(K)=\cEi_K\cOcF \tensor_{\cOcFK}\phi^KZ'$. There are maps between these modules, but the result is not generally quasi-coherent, so we will add some more generators.

By quasi-coherence of $X$, note that for any $x \in \phi^KX$ and any $L\subseteq K$, there is a representation
$V=V(y,L,K)$ of  $\torus/L$ so that $V^K=0$ and $e(V)x$ is in the image of
$$\beta_L^K: \phi^LX \lra \cEi_{K/L}\cOcFL \tensor_{\cOcFK}\phi^KX. $$
We may therefore choose $y_L^K(V)\in \phi^L X$ with $\beta_L^Ky_L^K=e(V) x$. We use this to increase
the size of the modules $Z'(L)$.

In our case, for each $F,i,j$ we choose $V(F,i,j)$ and $y(F,i,j)$ with
\[
\beta_{K_i}^{K_{i+1}}(y(F,i,j))=e(V(F,i,j))x^{F, i+1}_{j}.
\]
Note that adding the generators $y(F, i,j)$ to $\phi^{K_i}Z'$ does not increase the size of $\cEi_{K/L}Z'(K_i)$ since $x^{F,{i+1}}_{j}$ is already present. However it does ensure that  $\beta_L^K$ becomes surjective after the inversion of Euler classes.

We then take the countably generated $\cOcFL$-module
$$Z(L)=Z'(L)+\langle y(F,i,j)\st 1\leq i\leq n_L, K\geq L, 1\leq j\leq n_{F,i}\rangle. $$
The subobject $Z$ is now quasi-coherent, $\phi$-countably generated and contains $x\in Z(1)$.

This completes the proof of Claim 2, and hence shows $ \acal_c^p(\bT)$ is locally presentable.
Applying the functor $D^1 \otimes (-)$ to the generators of $ \acal_c^p(\bT)$
gives a set of generators for $d \acal_c^p(\bT)$.
As all other models are equivalent via exact equivalences, the result holds. 
\end{proof}

\begin{rem}
In the case of rank 1, \cite{s1q} constructed so-called `wide
spheres', which have the property that given $x\in X(1)$ there is a
wide sphere $WS$ and a map $WS\lra X$ so that $x$ is contained in the
image. Wide spheres are visibly $\phi$-{\em finitely} generated, and give a
fairly explicit set of generators. We do not know if the set of
generators can be taken to be $\phi$-finitely generated in rank $\geq 2$.

The combinatorics of the footprint of an element $x\in X(1)$ is sufficiently complicated, that even enumerating the possible shapes of wide spheres in general is rather daunting.
\end{rem}



\subsection{Models for the normalizer}

The material in this section is a summary of \cite[Section 6]{AGtoral}.
From now on, we will use $\square$ to indicate any of the possible subscripts and superscripts
described in the previous section.

When working in the case of $\bN$, the Weyl group $W=\bN/ \torus$ acts on the diagrams $\Sigma_\square$ of the previous section,
with the trivial action on $\Sigma_d$.
The diagram of rings comes with an action of $W$ that is compatible with this action on the poset. That is, given a diagram of rings $\mcR$ over a poset $\Sigma_{\square}$ and $w \in W$, there is a diagram of rings $w^*\mcR $ defined by $w^*\mcR (E)  = \mcR(E^w)$ and maps of diagrams
$w_{\mcR} : \mcR \to w^* \mcR$ satisfying the expected rules on composition and units.

Note that as each object of $\Sigma_d$ is fixed by $W$, the value of a
diagram of rings $\mcR$ at some element will be a
ring-object in differential graded $\bQ$--modules
with an action of $W$. Whereas on $\Sigma_a$ at each point of a diagram indexed by a subgroup $K$ one will
only have an action of the subgroup of $W$ that stabilises $K$.

We may then define the category of modules over these diagrams of rings,
denoted $R_\square^\square \leftmod$.  As with rings we may ask that the modules are
$W$--equivariant so there are maps $w_M : M \to w^* M$, where
$w^* M (E) = M(E^w)$ but with the action of $R_\square^\square(E)$ given by
$r \cdot m = (r^w)m$. As usual, the maps $w_M$ should compose
appropriately with   the identity $1_M$.
We denote this category
$R_\square^\square \leftmod ^*[W]$
and describe it as $W$--equivariant modules over $R_\square^\square$.
We may then consider such modules with differentials (they are naturally graded
as the rings are all graded), denoted by
$R_\square^\square \leftdmod$ and
$R_\square^\square \leftdmod ^*[W]$.
A more thorough description of the categories with $W$ action is given in
\cite[Section 4.A]{AGtoral}.

We now wish to describe the category $\RRa \modin \bQ[W] \leftdmod$
in terms of these equivariant diagrams.
This will make it easier to compare our work with the case of a torus.

\begin{lem}
The category of modules over $R^f_d=\RRa$ in $\bQ[W] \leftdmod$
is equivalent to the category of
$W$--equivariant diagrams in $R^f_d \leftdmod$
\[
\RRa \modin \bQ[W] \leftdmod
=
R^f_d \modin \bQ[W] \leftdmod
\cong
R^f_d \leftdmod^*[W].
\]
\end{lem}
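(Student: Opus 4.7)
The plan is to unwind both sides of the claimed equivalence and check that the structural data match on the nose, with the key observation being that $W$ acts trivially on the poset $\Sigma_d$ (since $W$ preserves dimension), and hence trivially on the flag poset $\Sigma_d^f$ indexing the cube.

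First, I would describe an object $M$ of $\RRa \modin \bQ[W] \leftdmod$ concretely: it is a $\Sigma_d^f$-diagram of DG $\bQ[W]$-modules, i.e.\ for each vertex $E$ a DG $\bQ$-module $M(E)$ carrying a $W$-action, together with $W$-equivariant structure maps $M(E) \to M(E')$ for each arrow $E \to E'$ in $\Sigma_d^f$, and compatible multiplication maps $\RRa(E) \otimes M(E) \to M(E)$ satisfying the usual associativity and unit axioms. Because $\RRa$ itself is a ring in $\bQ[W] \leftdmod$, the multiplication is automatically $W$-equivariant in the sense $w(rm) = (wr)(wm)$.

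Second, I would unpack the definition of $R^f_d \leftdmod^*[W]$ from \cite[Section 4.A]{AGtoral} in this particular case. A $W$-equivariant module over $R^f_d$ is an $R^f_d$-module $M$ together with maps $w_M : M \to w^*M$ satisfying $(vw)_M = (w^*v_M) \circ w_M$ and $1_M = \id$. At vertex $E$, we have $(w^*M)(E) = M(E^w)$; since $W$ acts trivially on $\Sigma_d^f$, this gives $w_M|_E : M(E) \to M(E)$. The compatibility with the $R^f_d$-action built into the definition of $w^*$ translates into $w_M(rm) = w_R(r)\cdot w_M(m)$, which is exactly the compatibility required for a $W$-action on the DG module $M(E)$ over the ring $R^f_d(E)$ equipped with its own $W$-action. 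The naturality of the $w_M$ with respect to the arrows of $\Sigma_d^f$ is equivalent to the structure maps of the underlying diagram being $W$-equivariant.

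Third, I would define the functor in both directions explicitly. From left to right, given $M \in \RRa \modin \bQ[W] \leftdmod$, set $\Phi(M)$ to be $M$ as a diagram of DG modules over $R^f_d$, with $w_M|_E$ defined by the given $W$-action on $M(E)$. From right to left, given $N \in R^f_d \leftdmod^*[W]$, the collection of $w_N|_E$ assembles to a $W$-action making each $N(E)$ a DG $\bQ[W]$-module, and the structure maps and multiplications become $W$-equivariant by the axioms. Checking that these functors are mutually inverse is a direct comparison of structure maps, and passing through the differential grading adds nothing beyond bookkeeping.

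The only mildly delicate point — and this is where I would be most careful — is confirming that the two notions of ``compatibility between the $W$-action on $M$ and the $W$-action on the ring'' really coincide, rather than differing by some twist or conjugation. Since $W$ acts trivially on $\Sigma_d$, no such twist arises and the identification is literally tautological; the same proof would be more substantial for the $\Sigma_a$ and $\Sigma_c$ versions, where $W$ permutes the vertices non-trivially. Because the resulting functors are inverse equivalences of categories, they automatically respect the monomorphisms and homology isomorphisms that define the injective model structure, so the equivalence is compatible with the model-categorical framework used later.
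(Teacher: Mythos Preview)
Your proof is correct and follows essentially the same route as the paper's, just more explicitly unwound. The paper's proof is a one-paragraph abstraction: for any ring $R$ in a category $\bC$ with $W$-action, an $R$-module $M$ equipped with maps $w_m:M\to w^*M$ (where $w^*M$ has the twisted $R$-action $r(w^*m)=w^*[(r^{w^{-1}})m]$) is literally the same data as an $R$-module in the category of $W$-objects in $\bC$, and the crucial remark is that ``$W$ acts objectwise''---which is precisely your observation that $W$ fixes $\Sigma_d^f$ pointwise, so no vertex-permutation twist enters.
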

\begin{proof}
If $R$ is a ring in a category $\bC$ with a $W$ action, the category
$R\leftmod^*[W]$ has objects $R$--modules $M$ with the additional structure of maps
\[
w_m: M\lra w^*M
\]
where the $R$--module $w^*M$ has the $R$--action
$r(w^*m)= w^*[(r^{w^{-1}})m]$.
This is exactly the same as saying that $M$ is an $R$--module in the category
of $W$--objects in $\bC$. It is important here that $W$ acts objectwise on objects in the right hand side category.
\end{proof}

\begin{rem}\label{rem:differentWmodelsForN} All additional conditions
  which go into defining algebraic models as full subcategories on
  special $R$--modules (like pqce) are
compatible with the  action of $W$ described above and thus one can define $W$--equivariant versions of the algebraic models for the rational torus spectra. This is done in detail in \cite{AGtoral} and we list the categories we will use below.

\begin{itemize}
\item $d \acal_a^p(\bN, toral)=d \acal_a^p(\bT)^*[W]$ is the category of $W$--equivariant $\mathcal{F}$-continuous qce
differential modules over $R_a^p$, where the diagram is pairs in $\Sigma_a$.
\item $d \acal_c^f(\bN, toral)=d \acal_c^f(\bT)^*[W],$ is the category of $W$--equivariant  qce
differential modules over $R_c^f$, where the diagram is flags in $\Sigma_c$.
\item $d \acal_d^f(\bN, toral)=d \acal_d^f(\bT)^*[W]$ is the category of $W$--equivariant pqce
differential modules over $R_d^f$, where the diagram is flags in $\Sigma_d$.
\end{itemize}
\end{rem}

\begin{lem}
The forgetful functor
\[
U \colon
R_\square^\square \leftdmod ^*[W]
\longrightarrow
R_\square^\square \leftdmod
\]
is faithful and has both adjoints.
\end{lem}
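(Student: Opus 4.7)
Faithfulness is immediate from the definition: a morphism in $R_\square^\square \leftdmod^*[W]$ is by construction a morphism $f \colon M \to N$ of the underlying $R_\square^\square$--modules satisfying the compatibility $w_N \circ f = (w^*f) \circ w_M$ for every $w \in W$. Two parallel morphisms that agree after applying $U$ are therefore already equal in the equivariant category, so $U$ is injective on hom-sets.

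For the existence of adjoints, my plan is to construct both explicitly as ``induction'' and ``coinduction'' along $W$. Given an $R_\square^\square$--module $N$, define
\[
F(N)(E) := \bigoplus_{w \in W} N(E^w), \qquad G(N)(E) := \prod_{w \in W} N(E^w),
\]
where at each vertex $E$ the $w$-th summand/factor $N(E^w)$ is viewed as an $R_\square^\square(E)$--module via the ring isomorphism $R_\square^\square(E) \xrightarrow{\sim} R_\square^\square(E^w)$ supplied by the $W$--equivariance of $R_\square^\square$. The diagram structure maps of $F(N)$ and $G(N)$ are assembled from those of $N$ together with the structure maps $w_{R_\square^\square}$, and the $W$--action permutes summands (resp.\ factors) in the evident way. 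Associativity of the $W$--action yields the cocycle condition $v_m w_m = (vw)_m$, so $F(N), G(N) \in R_\square^\square \leftdmod^*[W]$. Since $W$ is finite, the canonical map $F(N) \to G(N)$ is an isomorphism, reflecting the fact that induction and coinduction agree for finite groups; so in fact the two adjoints are canonically identified up to the switch of $W$--action.

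The adjunction bijections are checked by a direct universal-property argument: an equivariant map $F(N) \to M$ is determined by its restriction to the $w = e$ summand (which forgets to an $R_\square^\square$--map $N \to U(M)$), and conversely any $R_\square^\square$--map $N \to U(M)$ extends uniquely to an equivariant map out of $F(N)$ by applying the $W$--action on $M$ to translate to the other summands; the dual argument handles $G$. The main obstacle is purely notational bookkeeping --- keeping track of the twists $E \mapsto E^w$, the ring isomorphisms $R_\square^\square(E) \to R_\square^\square(E^w)$, and which summand is acted on by which element of $W$. A clean alternative that sidesteps the explicit formulas is to invoke the adjoint functor theorem: $U$ manifestly preserves all limits and colimits (these are computed objectwise after forgetting the $W$--structure), and both categories are locally presentable, inheriting the property from Lemma \ref{lem:locPresT}.
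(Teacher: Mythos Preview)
Your argument is correct. The paper's own proof of this lemma is purely the abstract one you mention at the end: it observes that $U$ forgets structure (hence is faithful), preserves all limits and colimits, and lands between locally presentable categories, so both adjoints exist by the adjoint functor theorem. Your explicit construction of $\oplus_W$ as simultaneous left and right adjoint is not given here but appears verbatim in the paper's next subsection (``Forgetful functors''), where the concrete description is actually needed; so you have effectively anticipated that step.

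One small imprecision: your appeal to Lemma~\ref{lem:locPresT} for local presentability is misplaced, since that lemma treats the qce subcategories $\mathcal{A}_\square^\square(\bT)$, not the ambient module categories $R_\square^\square\leftdmod$ and $R_\square^\square\leftdmod^*[W]$. Local presentability of the latter is more elementary (modules over a small diagram of rings, and then objects with a group action in such a category), and the paper handles the $d^f$ case separately in a later proposition; you should cite that kind of argument rather than Lemma~\ref{lem:locPresT}. This does not affect correctness, only the justification.
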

\begin{proof}
An object on the left hand side is an object on the right hand side with additional structure
given by the action maps for all $w\in W$. The functor $U$ simply forgets this additional structure.
The forgetful functor has both adjoints since it is a functor between locally presentable categories which preserves all limits and colimits.
\end{proof}

\begin{lem}\label{lem:ForgetWactionNmodels} The forgetful functor
\[
U \colon
d\mcA_\square^\square (\bN, toral)
=
d\mcA_\square^\square (\bT)^*[W]
\longrightarrow
d\mcA_\square^\square (\torus)
\]
is faithful and has both adjoints, where the category $d \acal_\square^\square(\bN, toral)$ stands for any of the categories described in Remark \ref{rem:differentWmodelsForN} and $d\mcA_\square^\square (\torus)$ is its non-equivariant, torus version.
\end{lem}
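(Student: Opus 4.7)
The plan is to argue by direct parallel with the preceding lemma on the ring diagrams $R_\square^\square$, invoking the adjoint functor theorem for locally presentable categories. Faithfulness of $U$ is immediate from the definitions: a morphism in $d\mcA_\square^\square(\bT)^*[W]$ is an underlying morphism of $R_\square^\square$-modules (in the subcategory $d\mcA_\square^\square(\bT)$) that intertwines the action maps $w_M$, so two such morphisms that agree after applying $U$ are equal as equivariant morphisms.

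For the existence of both adjoints, I would first verify that the source and the target are both locally presentable. The target $d\mcA_\square^\square(\bT)$ is locally presentable by Lemma \ref{lem:locPresT}. For the source $d\mcA_\square^\square(\bT)^*[W]$, I would use the general fact that if $\bC$ is a locally presentable category carrying an action of a finite group $W$ (acting here via pullback along the $W$-action on the underlying poset and rings), then the category $\bC^*[W]$ of $W$-equivariant objects is again locally presentable. One can see this either $2$-categorically, by expressing $\bC^*[W]$ as an appropriate limit in the $2$-category of locally presentable categories and limit-preserving functors, or explicitly, by taking as generators the images under the free $W$-equivariant functor of the generators of $d\mcA_\square^\square(\bT)$ produced in the proof of Lemma \ref{lem:locPresT}.

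Next I would check that $U$ preserves all limits and all colimits. The essential observation is that these are created in the underlying (non-equivariant) category: given a diagram in $d\mcA_\square^\square(\bT)^*[W]$, its limit or colimit computed on underlying modules inherits canonical $W$-action maps $w_M$ assembled pointwise from the $w_M$ of the diagram, and this is the limit or colimit in the equivariant category. The full-subcategory conditions (qce, $\mathcal{F}$-continuity, pqce) are known to be stable under the relevant limits and colimits already in the non-equivariant torus model, and thus persist after adding the $W$-action. Given that $U$ is a limit- and colimit-preserving functor between locally presentable categories, the adjoint functor theorem supplies both a left and a right adjoint.

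The one place where genuine verification is needed is the local presentability of the equivariant algebraic model; once that is granted, the preservation of limits and colimits by a forgetful functor of this shape is essentially formal, and the argument concludes exactly as in the proof of the preceding lemma for diagrams of rings.
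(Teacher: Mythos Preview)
Your argument is correct, but the paper takes a shorter and more concrete route. Rather than re-running the adjoint functor theorem at the level of the algebraic models, the paper simply invokes the previous lemma (which already produced both adjoints for $U$ on the ambient module categories $R_\square^\square\leftdmod^*[W]\to R_\square^\square\leftdmod$) and then observes that these adjoints preserve the defining conditions (qce, pqce, $\mathcal{F}$-continuous), so they restrict and corestrict to the full subcategories $d\mcA_\square^\square$. This buys an explicit description of the adjoints (they are both $\oplus_W$, as spelled out later in Subsection~\ref{subsec:forgetful}), which the paper actually uses downstream, for instance in the retraction of Remark~\ref{rmk:retract} and in the model-categorical arguments.

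One caution about your version: you propose to establish local presentability of the source either $2$-categorically or by pushing the generators of $d\mcA_\square^\square(\bT)$ forward along the ``free $W$-equivariant functor.'' The latter is the left adjoint you are trying to construct, so as written this is circular; you would need to first build $\oplus_W$ by hand (which is easy and is what the paper does later) before using it to produce generators. In the paper's logical order, local presentability of $d\mcA_\square^\square(\bN,toral)$ is in fact deduced \emph{from} the present lemma in the very next result, so your reordering would require supplying that ingredient independently. The $2$-categorical route avoids this, but is less elementary than simply checking that the explicit adjoint $\oplus_W$ preserves the relevant conditions.
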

\begin{proof} This follows from the previous result and the fact that both left and right adjoints preserve additional conditions on objects (like pqce or  $\mathcal{F}$-continuous) thus they restrict and corestrict to the algebraic models on both sides.
\end{proof}

\begin{lem} All categories $d \acal_\square^\square(\bN, toral)$ described in Remark \ref{rem:differentWmodelsForN} are locally presentable categories.
\end{lem}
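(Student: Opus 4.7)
The plan is to transfer local presentability from the non-equivariant models $d\mcA_\square^\square(\torus)$, which are locally presentable by Lemma \ref{lem:locPresT}, using the faithful forgetful functor
\[
U\colon d\mcA_\square^\square(\bN, toral) = d\mcA_\square^\square(\torus)^*[W] \lra d\mcA_\square^\square(\torus)
\]
of Lemma \ref{lem:ForgetWactionNmodels}. Recall that $U$ has a left adjoint $L$ and a right adjoint $R$, and that each $w\in W$ acts on $d\mcA_\square^\square(\torus)$ as an auto-equivalence $w^*$; an object of $d\mcA_\square^\square(\bN, toral)$ is an object $M$ of $d\mcA_\square^\square(\torus)$ together with coherent isomorphisms $w_M\colon M\to w^*M$.

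The key step will be to show that $U$ is monadic and that the associated monad $T := UL$ on $d\mcA_\square^\square(\torus)$ is $\omega$-accessible. Monadicity follows from Beck's theorem: $U$ is faithful by Lemma \ref{lem:ForgetWactionNmodels}; it reflects isomorphisms because the underlying inverse of an equivariant map that becomes invertible after $U$ is automatically equivariant; and it creates coequalizers of $U$-split pairs, since each auto-equivalence $w^*$ preserves colimits, so any coequalizer computed in $d\mcA_\square^\square(\torus)$ carries a canonical equivariant structure. The monad $T$ is then naturally isomorphic to the induction functor $M\mapsto \bigoplus_{w\in W} w^*M$, which is $\omega$-accessible because $W$ is finite and each $w^*$ preserves filtered colimits.

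Combining these facts with the standard result that algebras over an accessible monad on a locally presentable category again form a locally presentable category, we conclude that $d\mcA_\square^\square(\bN, toral)\simeq T\textnormal{-Alg}$ is locally presentable for each variant described in Remark \ref{rem:differentWmodelsForN}.

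The main obstacle is the concrete identification of $L$ (and hence $T$) with the induction construction $\bigoplus_{w\in W} w^*(-)$, and verifying that this construction preserves the side conditions (qce, extended, $\mcF$-continuous, pqce) that cut out the respective full subcategories. These checks are essentially routine unwindings of the definitions in \cite{AGtoral}, using that each side condition is stable under the auto-equivalences $w^*$ and under finite direct sums.
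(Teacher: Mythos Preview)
Your proof is correct but takes a different route from the paper. The paper's argument is much shorter: it observes that each $d\mcA_\square^\square(\bN,toral)$ is abelian with exact filtered colimits, and then produces a categorical generator by applying the left adjoint of $U$ (from Lemma~\ref{lem:ForgetWactionNmodels}) to a generator of $d\mcA_\square^\square(\torus)$ (from Lemma~\ref{lem:locPresT}); faithfulness of $U$ makes this a generator upstairs. One then invokes the fact that Grothendieck abelian categories are locally presentable.

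Your approach via Beck monadicity and the accessible-monad theorem is more general in that it does not use the abelian structure at all, and it makes explicit the shape of the left adjoint $L\cong\bigoplus_{w\in W}w^*(-)$. The paper's approach, by contrast, leverages the abelian context already in hand and avoids verifying the hypotheses of Beck's theorem. Your monadicity check could be streamlined: since $w^*$ preserves all colimits, $U$ in fact creates \emph{all} colimits, which immediately gives the $U$-split coequalizer condition. Also note that the side-condition check you flag as an obstacle is already absorbed into Lemma~\ref{lem:ForgetWactionNmodels}, which asserts the adjoints exist at the level of the algebraic models themselves, not merely on the ambient module categories.
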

\begin{proof} These are  abelian categories where filtered colimits are exact. One uses Lemma \ref{lem:ForgetWactionNmodels} and Lemma \ref{lem:locPresT} to show that there exists a categorical generator in $d \acal_\square^\square(\bN, toral)$.
\end{proof}

\subsection{Relations between the models}\label{subsec:relations}

Since we mentioned different algebraic models for rational $\torus$--spectra it is time to describe the relationships between them. We present here only the sketch of equivalences between these categories, more details can be found in \cite[Section 10.A and 10.B]{tnq3}.
In effect, Subsections \ref{subsec:relations}, \ref{subsec:forgetful} and \ref{subsec:torsion} amount to observing that the constructions there are
compatible with the $W$--action.
\[
\xymatrix@C+0.3cm{
d \acal_a^f(\bT)
\cong
d \acal_a^p(\bT)
\ar@<-1ex>[r]_-{\Gamma q_!^d}^-{\cong}
&
\ar@<-1ex>[l]_-{e'}
d \acal_c^p(\bT)
\ar@<-1ex>[r]_-{p}^-{\cong}
&
\ar@<-1ex>[l]_-{f}
d \acal_c^f(\bT)
\ar@<-1ex>[r]_-{d_!^e}^-{\cong}
&
\ar@<-1ex>[l]_-{e}
d \acal_d^f(\bT)
}
\]

We briefly introduce the functors $d_!^e$ and $e$ from the diagram above 
and another functor $i$.
Given an object $M \in d \acal_d^f(\bT)$, we can define
a object $eM$ of $d \acal_c^f(\bT)$ as follows.
Let $E=(K_0 \supset \dots \supset K_s) \in \Sigma_c^f$ with dimension vector
$\underline{d} = (d_0 > \dots > d_s) \in \Sigma_d^f$. Then
\[
eM(K_0 \supset \dots \supset K_s) =
e_E M(d_0 > \dots > d_s)
\]
where $e_E$ is the idempotent given by the projection
$R_d^f (\underline{d}) \to R_c^f (E)$
as the first term is an iterated product which has a particular
factor defined by $E$.
The functor
\[
i : d \acal_c^f(\bT) \to R_c^f \leftdmod
\]
is simply the inclusion and the functor
\[
d_!^e : d \acal_c^f(\bT) \to d \acal_d^f(\bT)
\]
is a subfunctor of
\[
\begin{array}{c}
d_* : R_c^f \leftdmod \to R_d^f \leftdmod \\
d_* (N) : \underline{d} \mapsto
\underset{F \in \Sigma_c^f, \dim(F) = \underline{d}}{\prod}  N(F).
\end{array}
\]

\begin{rem} One of the most important features of all the functors above is that they are exact. Combining this with the fact that they are equivalences of categories ensures that they all preserve and create both monomorphisms and homology isomorphisms. This will be crucial for establishing model structures later in the paper.
\end{rem}

\begin{rem}
It is clear from the definition that $d_!^e$, $i$ and $e$ commute with the $W$--action and thus extend to the level of models for rational toral $\bN$--spectra.

\[
\xymatrix@C+0.3cm{
d \acal_a^f(\bN, toral)
\ar@<-1ex>[r]_{p \Gamma q_!^d}
&
\ar@<-1ex>[l]_{e' f}
d \acal_c^f(\bN, toral)
\ar@<-1ex>[r]_{d_!^e}
&
\ar@<-1ex>[l]_e
d \acal_d^f(\bN, toral)
}
\]

\end{rem}

Note that we avoid using diagrams
based on pairs $\Sigma_\square^p$ for $\bN$, as these do not
work correctly in the toral case (normalizers are not functorial in
subgroups, whereas they are functorial in flags).

\subsection{Forgetful functors}\label{subsec:forgetful}

We consider the forgetful functors
from toral models of $\bN$--spectra to models for the torus.
\[
\xymatrix@C+0.3cm{
d \acal_a^f(\bN, toral)
\ar@<-1ex>[r]_{p \Gamma q_!^d}
\ar[d]|U
&
\ar@<-1ex>[l]_{ e'f}
d \acal_c^f(\bN, toral)
\ar@<-1ex>[r]_{d_!^e}
\ar[d]|U
&
\ar@<-1ex>[l]_e
d \acal_d^f(\bN, toral)
\ar[d]|U
\\
d \acal_a^f(\bT)
\ar@<-1ex>[r]_{p \Gamma q_!^d}
&
\ar@<-1ex>[l]_{e'f}
d \acal_c^f(\bT)
\ar@<-1ex>[r]_{d_!^e}
&
\ar@<-1ex>[l]_e
d \acal_d^f(\bT)
}
\]
We define $U$ to be the functor which forgets the action of the Weyl group:
\[
\adjunction{U}{R_d^f \leftdmod^*[W]}{R_d^f \leftdmod}{\oplus_W}.
\]
This functor has a right adjoint (analogous to induction) which sends a module $M$ to the equivariant module which at a flag $F$ takes a value of a skewed product of $W$ with $M$,
$\oplus_W M(F) = \oplus_{w \in W} (w^* M)(F)$, where  $w^* M(F)$ is $M(F^w)$ but with
the action given by $\mu \circ (w \otimes 1) : \R_d^f(F) \otimes M(F^w) \to M(F^w)$.
The functor $\oplus_W$ is also left adjoint to $U$ as finite
coproducts and products coincide (just as induction and coinduction
agree for finite groups).
These functors preserve various structures like p, qce and $\mcF$-continuous
and hence pass from categories of modules to the algebraic models.

In particular, an object $M$ of category $R_\square^f \leftdmod^*[W]$
can be described as an object $UM$ of $R_\square^f \leftdmod$ with extra structure:
maps $UM \to w^* UM$ in the category $R_\square^f \leftdmod$
for each $w \in W$ (such that these maps should be unital and compatible with composition).

\begin{rem}\label{rmk:retract}
The fact that $\oplus_W$ is both the left and right adjoint to the forgetful functor $U$
allows us to obtain a retraction using the counit and unit:
\[
M
\to
\oplus_W  U M
\to
M.
\]
This is a more complicated version of the following retraction for modules over $\bQ[W]$:
\[
\xymatrix@R=1pc{
A \ar[r] &
\hom(\bQ[W], UA) \ar[r] &
\oplus_W UA \ar[r] &
A \\
a \ar@{|->}[r]  &
(w \mapsto w a)
&
(w,m)_{w \in W}
\ar@{|->}[r]  &
\sum_{w \in W} \frac{1}{|W|} wm\\
&
f \ar@{|->}[r]  &
(w, f(w^{-1}))_{w \in W}
}
\]
where $\oplus_W M= \bQ[W] \otimes M$ is the direct sum of $|W|$--copies of $M$, with $W$--action
permuting the factors.
\end{rem}

\subsection{The torsion functor}\label{subsec:torsion}

The aim of this section is to complete the following diagram
by constructing the adjunction marked with dashed arrows so that $U$ commutes with vertical left and right adjoints.
\[
\xymatrix@C+1cm{
R_d^f \leftdmod^*[W]
\ar@<+0ex>[r]|{U}
\ar@<+1ex>@{-->}[d]^{\Gamma^f_d}
&
R_d^f \leftdmod
\ar@<+1ex>[d]^{\Gamma^f_d}
\\
\ar@<+1ex>@{-->}[u]^{d_* i e}
d \acal_d^f(\bN, toral)
\ar@<+0ex>[r]|{U}
&
\ar@<+1ex>[u]^{d_* i e}
d \acal_d^f(\bT)
}
\]

To do this, we first refer to the construction of this adjunction at the torus level (no $W$ action) and then lift it from there.
We present a diagram that contains the various categories and functors we will need. The back square describes the situation without $W$ action.
\[
\xymatrix@C+0.1cm@R+0.1cm{
d \acal^f_c(\bT)
\ar@<-1ex>[rr]_{d^e_!}
\ar@<-1ex>[d]_(0.55){i}
& &
d \acal^f_d(\bT)
\ar@<-1ex>[ll]_{e}
\\
R^f_c \leftdmod
\ar@<+1ex>[rr]^{d_*}
\ar@<-1ex>[u]_(0.45){\Gamma_c^f}
& &
R^f_d \leftdmod
\ar@<+1ex>[ll]^{e}
\\
&
d \acal^f_c(\bT)^*[W]
\ar@<-1ex>[rr]_{d^e_!}
\ar@<-1ex>@{-->}[d]_{i}
\ar[uul] |(0.7)U
& &
d \acal^f_d(\bT)^*[W]
\ar@<-1ex>[ll]_{e}
\ar[uul] |(0.7)U
\\
&
R^f_c \leftdmod^*[W]
\ar@<+1ex>[rr]^{d_*}
\ar@<-1ex>@{-->}[u]_{\Gamma_c^f}
\ar[uul] |(0.7)U
& &
R^f_d \leftdmod^*[W]
\ar@<+1ex>[ll]^{e}
\ar[uul] |(0.7)U
}
\]

The starting place is the top--left adjunction.
\begin{lem}\cite[Theorem 11.1]{AGtoral} 
There is an adjoint pair \[\adjunction{i}{d \acal_c^f(\bT)}{R_c^f\leftdmod}{\Gamma_c^f}\] 
where $i$ is the inclusion functor and $\Gamma_c^f$ is its right adjoint. 
\end{lem}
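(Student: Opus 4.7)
The plan is to exhibit $d\acal_c^f(\bT)$ as a coreflective subcategory of $R_c^f\leftdmod$, producing $\Gamma_c^f$ as the counit/unit-based right adjoint to the fully faithful inclusion $i$. My first move would be to argue abstractly: both categories are locally presentable (the second by the local presentability result proved for the algebraic models earlier in the paper, the first because it is a module category over a small diagram of rings), so by the adjoint functor theorem for locally presentable categories it suffices to check that $i$ preserves all small colimits. Since $i$ is a full inclusion of a subcategory defined by a list of equational conditions (extendedness) and localization-type conditions (quasi-coherence), this reduces to observing that the functors $\phi^K$ and the Euler-class localizations $\mathcal{E}_K^{-1}(-)$ used in the definition of qce all commute with filtered colimits and with coequalizers, and that the qce conditions are preserved under arbitrary direct sums.

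Next I would give an explicit construction of $\Gamma_c^f$ to match the description in \cite{AGtoral}. Given $M \in R_c^f\leftdmod$, I would define $\Gamma_c^f(M)$ flag by flag: first form, at each connected subgroup $K$, a canonical submodule $\phi^K \Gamma_c^f(M) \subseteq \phi^K M$ picking out the elements which are extended and compatible with the transition maps down the diagram; then define $\Gamma_c^f(M)(F)$ for a flag $F = (K_0 \supset \cdots \supset K_s)$ by forming the extension $\mathcal{E}_{K_s}^{-1}\cOcFK \otimes_{\cOcFK[K_s]} \phi^{K_s}\Gamma_c^f(M)$ and intersecting inside $M(F)$ with the requirement that it maps compatibly into all the other values via the diagram maps of $M$. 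The result is by construction extended, and the intersection along all flags ensures quasi-coherence.

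With this in hand, the universal property of $\Gamma_c^f$ is almost automatic: given a qce module $N$ and a map $f \colon iN \to M$ in $R_c^f\leftdmod$, the qce conditions on $N$ force the image $f(N)(F)$ to lie inside the submodule $\Gamma_c^f(M)(F)$ at every flag, giving a unique factorization $N \to \Gamma_c^f(M)$. One then checks this factorization is natural and that $\Gamma_c^f$ is functorial — both are formal consequences of the construction.

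The main obstacle I anticipate is verifying that the explicit $\Gamma_c^f(M)$ defined vertex-wise is actually an object of $d\acal_c^f(\bT)$, i.e.\ that the various local truncations glue into a genuinely quasi-coherent, extended, $\mathcal{F}$-continuous module. The subtlety is that restricting $\phi^K M$ independently at each $K$ can destroy the compatibility between different stalks under the transition $\phi^L M \to \mathcal{E}_{K/L}^{-1}\cOcFL \otimes_{\cOcFK} \phi^K M$; one has to choose the submodules coherently. This bookkeeping is precisely the content of \cite[Theorem 11.1]{AGtoral}, and in the written proof I would cite that result rather than repeat the argument, restricting myself to emphasising that the construction is manifestly compatible with the later extension to the $W$-equivariant setting needed for $\bN$--spectra.
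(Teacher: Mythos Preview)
The paper provides no proof of this lemma whatsoever: it is stated purely as a citation of \cite[Theorem 11.1]{AGtoral}, with no argument given. Your proposal therefore goes considerably beyond what the paper does, sketching both an abstract adjoint-functor-theorem route and an explicit torsion-functor construction of $\Gamma_c^f$, and you are right that in the end one simply cites the reference.

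Two small points on the sketch itself. First, you have the local-presentability justifications swapped: $R_c^f\leftdmod$ is the diagram-module category (locally presentable for standard reasons), while $d\acal_c^f(\bT)$ is the qce subcategory whose local presentability is the content of Lemma~\ref{lem:locPresT}. Second, the abstract argument rests on the inclusion $i$ preserving \emph{all} small colimits in $R_c^f\leftdmod$; this is true here because the qce and extendedness conditions are expressed via tensor products and Euler-class localizations, both of which commute with colimits, but it is worth saying so rather than leaving it implicit. Your explicit description of $\Gamma_c^f$ is in the right spirit (and indeed the section of the paper is titled ``The torsion functor''), though the actual construction in the references proceeds more directly by identifying the maximal qce subobject rather than by the intersection procedure you outline.
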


The adjunction $(d_* i e,\Gamma_d^f)$ was defined in \cite[Section 11.C]{tnq3} going around the solid arrows of the back square. To be more precise,  $\Gamma_d^f$ at the $\bT$
level was defined by passing round the diagram $\Gamma_d^f = d_!^e \Gamma_c^f e$.
With this definition, $\Gamma_d^f$ has a left adjoint $d_* i e$.

We plan to do the same in the front square describing the situation with the $W$ action. To do that, we need to show that the dashed functors exist. It is clear that
the dashed functor
\[
i : d \acal^f_c(\bT) \to R_c^f \leftdmod
\]
(which forgets the pqce structure)
commutes precisely with the functors $w^*$.
It follows that $\Gamma_c^f$ commutes with $w^*$ up to a natural isomorphism. In fact, checking the definition from \cite[Section 11]{tnq3} one can show that it commutes precisely.

Now we may extend $\Gamma_c^f$ to $R_c^f \leftdmod^*[W]$ as follows.
Let $M$ be an object of $R_c^f \leftdmod^*[W]$, we define an element of
$d \acal^f_c(\bT)^*[W]$ as follows.
At $E$ we take $(\Gamma_c^f UM)(E)$ with (skewed) $W$ action given by
\[
\Gamma_d^f U (w_M) (E): (\Gamma_d^f UM)(E)
\longrightarrow
(\Gamma_d^f U w^*M)(E)
=
w^*(\Gamma_d^f UM)(E).
\]
It is routine to check that this defines a skewed $W$--action and hence
we get an object of $d \acal^f_c(\bT)^*[W]$.
We then define $\Gamma_d^f$ at the level of diagrams with $W$--action
by passing round the diagram $\Gamma_d^f = d_!^e \Gamma_c^f e$.
This has adjoint $d_* i e$.

\section{Model structures and Quillen equivalences}
Collecting results of the previous section we have a commuting diagram, where the two bottom vertical adjunctions on both sides are
equivalences of categories. The horizontal functors $U$ commute with both left and right vertical functors. Recall that $\RRa$ from Section \ref{sec:spectratoDGAsformal} is the
diagram of rings $R_d^f$.
\begin{figure}[!ht]
\caption{Diagram of model categories}\label{fig:algebradiagram}
\[
\xymatrix@C+1cm{
R_d^f \leftdmod^*[W]
\ar@<+1ex>[r]|{U}
\ar@<+1ex>[d]^{\Gamma^f_d}
&
\ar@<+1ex>[l]
\ar@<-3ex>[l]
R_d^f \leftdmod_{ii}
\ar@<+1ex>[d]^{\Gamma^f_d}
\\
\ar@<+1ex>[u]^{d_* i e}
d \acal_d^f(\bN, toral)
\ar@<+1ex>[r]|{U}
\ar@<-1ex>[d]_e
&
\ar@<+1ex>[u]^{d_* i e}
\ar@<+1ex>[l]
\ar@<-3ex>[l]
d \acal_d^f(\bT)_{i}
\ar@<-1ex>[d]_e
\\
\ar@<-1ex>[u]_{d_!^e}
d \acal_c^f(\bN, toral)
\ar@<+1ex>[r]|{U}
\ar@<-1ex>[d]_{e'f}
&
\ar@<-1ex>[u]_{d_!^e}
\ar@<+1ex>[l]
\ar@<-3ex>[l]
d \acal_c^f(\bT)
\ar@<-1ex>[d]_{e'f}
\\
\ar@<-1ex>[u]_{p \Gamma q_!^d}
d \acal_a^f(\bN, toral)
\ar@<+1ex>[r]|{U}
&
\ar@<-1ex>[u]_{p \Gamma q_!^d}
\ar@<+1ex>[l]
\ar@<-3ex>[l]
d \acal_a^f(\bT)
\\
}
\]
\end{figure}
The aim of this section is to establish model categories on the left hand side of this diagram.
We will keep adding information about model structures and Quillen equivalences to this diagram as we proceed. At the starting point there are model structure only on
$R_d^f \leftdmod_{ii}$ and $d \acal_c^f(\bT)_{i}$ which is indicated by subscripts $ii$ and $i$. The model structures on both categories are injective model structures, where cofibrations are objectwise monomorphisms and weak equivalences are objectwise homology isomorphisms.

We refer to $R_d^f \leftdmod_{ii}$ as having the doubly--injective model structure
as this model structure is the diagram--injective model structure, 
see \cite[Section 3]{gsmodules}, on diagrams of 
categories of chain complexes each equipped with the injective model structure.

A key input to our approach is that the functors in the diagram above are exact,
with the exception of the two functors labelled $\Gamma^f_d$.

\subsection{First row of the diagram}

\begin{lem}\label{lem:right-proper-doubly-inj}
Any category of generalised diagrams of $R$--modules indexed on a punctured cube with the 
doubly--injective model structure is right proper.
\end{lem}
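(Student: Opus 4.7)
The plan is to reduce right properness to a vertex-by-vertex check. In the doubly-injective model structure, weak equivalences (objectwise quasi-isomorphisms) and pullbacks are both computed objectwise, so given a pullback square
\[
\xymatrix{
P \ar[r] \ar[d] & A \ar[d]^{p} \\
B \ar[r]^{f} & C
}
\]
with $p$ a fibration and $f$ a weak equivalence, it suffices to show that at each vertex $v$ of the punctured cube, $P_v \to A_v$ is a quasi-isomorphism of chain complexes of $R(v)$--modules.

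The key input is that fibrations in the doubly-injective structure are objectwise epimorphisms. To see this, consider the diagram-projective model structure on the same underlying category (using the injective model structure on chain complexes at each vertex), which exists since the vertex-wise injective model structures are cofibrantly generated. Its generating cofibrations, being obtained by applying left adjoints to evaluation functors, are in particular objectwise monomorphisms, hence the projective cofibrations are contained in the doubly-injective cofibrations. Consequently the doubly-injective fibrations are contained in the projective fibrations, which are objectwise fibrations in the injective model structure on chain complexes of $R(v)$--modules. Such fibrations are in particular epimorphisms (e.g.\ by the lifting property against the acyclic monomorphism $0 \hookrightarrow D^n$ in each degree $n$).

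Given objectwise surjectivity of $p$, right properness at each vertex follows from a standard five-lemma argument. Setting $K_v = \ker(p_v)$, the pullback yields compatible short exact sequences
\[
0 \to K_v \to P_v \to B_v \to 0 \quad \text{and} \quad 0 \to K_v \to A_v \to C_v \to 0,
\]
linked by the natural map (identity on $K_v$, the map $P_v \to A_v$ in the middle, and $f_v$ on the right). The induced morphism of long exact sequences in homology is the identity on the $K_v$-columns and an isomorphism on the $B_v \to C_v$ column (since $f_v$ is a quasi-isomorphism), so the five-lemma forces $H_*(P_v) \to H_*(A_v)$ to be an isomorphism. The main obstacle is establishing the objectwise epimorphism property for doubly-injective fibrations; once that is in hand, the vertex-wise homological algebra is routine.
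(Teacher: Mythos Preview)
Your argument is correct and follows essentially the same strategy as the paper, though packaged a little differently. The paper compares with the \emph{doubly-projective} model structure (projective on chain complexes at each vertex, then diagram-projective): this has objectwise fibrations, objectwise weak equivalences, and objectwise pullbacks, hence is right proper, and every doubly-injective fibration is in particular a doubly-projective fibration. You instead extract the concrete consequence (doubly-injective fibrations are objectwise epimorphisms) and run the five-lemma by hand; that five-lemma argument is of course exactly why the projective model structure on chain complexes is right proper, so the underlying content is the same.

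One small caution: you route through the diagram-projective model structure built on the \emph{injective} vertex model structures and assert that its generating cofibrations $L_v(i)$ are objectwise monomorphisms. Since $L_v$ involves extension of scalars along the ring maps $R(v)\to R(w)$, this is not automatic for an arbitrary monomorphism $i$. It \emph{is} automatic when $i$ is degreewise split, which is why the doubly-projective comparison (as in the paper) goes through without fuss. Alternatively, your parenthetical already contains the direct fix: the diagram map $0 \to L_v(D^n)$ is visibly an objectwise acyclic monomorphism (the domain is zero and $R(w)\otimes_{R(v)}D^n$ is contractible), and lifting against it forces $p_v$ to be surjective, bypassing the intermediate model structure entirely.
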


\begin{proof}
The projective model structure on $R$--modules is right proper and there is a doubly--projective model structure on generalised diagrams of $R$--modules. The doubly--projective 
model structure has fibrations and weak equivalences defined objectwise.
As pullbacks are constructed objectwise,  it is right proper. 
Every fibration in the doubly--injective model structure is in particular a fibration in the doubly--projective model structure.
\end{proof}
It follows from the lemma above that the category of $R_d^f \leftdmod_{ii}$ is right proper.

\begin{prop} Both categories $R_d^f \leftdmod^*[W]$ and $R_d^f \leftdmod$ are locally presentable.
\end{prop}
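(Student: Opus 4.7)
The plan is to identify each of these two categories as a Grothendieck abelian category, after which local presentability follows from the standard result that every Grothendieck abelian category is locally presentable.

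First I would handle $R_d^f \leftdmod$. This is the category of differential graded modules over a diagram of DGAs indexed by the (finite) punctured cube of flags in $\Sigma_d$, and it can be equivalently described as the category of DG-enriched presheaves over a small DG-enriched category. I would verify the three Grothendieck axioms directly: cocompleteness (limits and colimits are computed objectwise and degreewise in the ambient categories of chain complexes); exactness of filtered colimits (inherited objectwise from chain complexes over each $R_d^f(v)$); and the existence of a small generating family, obtained by left Kan-extending the free $R_d^f(v)$-modules (appropriately shifted in each degree) along the inclusion of each vertex $v$ of the punctured cube.

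Next I would pass to $R_d^f \leftdmod^*[W]$. An object is an object of $R_d^f \leftdmod$ together with a compatible action of the finite group $W$, and the forgetful functor $U$ in Figure \ref{fig:algebradiagram} admits both left and right adjoints given by the induction functor $\oplus_W$. Taking $W$-objects in a Grothendieck abelian category again yields a Grothendieck abelian category: filtered colimits are computed in the underlying category $R_d^f \leftdmod$ equipped with the pointwise $W$-action, and hence remain exact, while a small generating family is obtained by applying $\oplus_W$ to the generators of $R_d^f \leftdmod$ constructed in the previous paragraph. Combining the two observations, both categories are Grothendieck abelian, and therefore locally presentable.

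No serious obstacle is expected in carrying out this plan; the argument is a routine verification of the Grothendieck axioms, with the only mild care required being the explicit description of a small generating set in each case (and, in the equivariant case, checking that $\oplus_W$ does produce a generating family, which is immediate from the unit--counit retraction of Remark \ref{rmk:retract} applied componentwise).
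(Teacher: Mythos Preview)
Your proposal is correct and follows essentially the same approach as the paper. Both arguments construct an explicit categorical generator: for $R_d^f\leftdmod$ by applying the left adjoint $K_x$ of evaluation at each vertex $x$ to the free module $R_x$ (your ``left Kan extension of free modules''), and for $R_d^f\leftdmod^*[W]$ by further applying $\oplus_W$; the paper is simply terser and does not spell out the Grothendieck-abelian framework or the verification that filtered colimits are exact.
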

\begin{proof} We show that $R_d^f \leftdmod^*[W]$ is locally presentable, the other follows similarly.
It is clearly an abelian category. Recall that the forgetful functor $R_d^f \leftdmod^*[W] \lra R_d^f \leftdmod$ is faithful and $R_x$ is a categorical generator for $R_x \leftmod$. Using the notation $K_x$ for the left adjoint to the evaluation functor at $x$, $\textrm{ev}_x: R_\bullet \leftmod \lra R_x\leftmod$ and $\oplus_{W}(-)$ the left adjoint to $U$ the categorical generator is given by $\oplus_{x}K_x\oplus_{W}R_x$.

\end{proof}

\begin{lem}There exists a left-induced model structure on the category $R_d^f \leftdmod^*[W]$ from the doubly--injective model structure $R_d^f \leftdmod_{ii}$ using the adjunction
\[
\xymatrix@C+1cm{
R_d^f \leftdmod^*[W]
\ar@<+1ex>[r]^{U}
&
\ar@<+1ex>[l]
R_d^f \leftdmod_{ii}.}
\]
We use the notation $R_d^f \leftdmod^*[W]_{ii}$ for this new model structure.
The adjunction above becomes a Quillen pair and the left adjoint preserves all fibrations, and thus fibrant objects.
\end{lem}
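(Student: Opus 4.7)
The plan is to apply the left-induction theorem of \cite{HKRS}. The hypotheses to verify are that both categories are locally presentable and that the target model structure is accessible (combinatorial), together with the acyclicity condition. Local presentability of both sides was established in the preceding proposition, and $R_d^f \leftdmod_{ii}$ is combinatorial because the injective model structure on chain complexes over a ring (and more generally on chain complexes in a Grothendieck abelian category) is combinatorial, and the doubly--injective structure on the punctured cube diagram inherits this. Under these hypotheses the HKRS theorem reduces existence of the left-induced model structure to the acyclicity condition for the would-be trivial fibrations.

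To verify acyclicity, my plan is to use the path object argument. Starting from the standard DG interval $\Lambda = \bQ\langle 0,1,e_{01}\rangle$ with $d(e_{01}) = 1-0$, path objects in $R_d^f \leftdmod_{ii}$ are built objectwise as $C \otimes \Lambda$. Because $\Lambda$ carries no $W$--action, this construction is naturally $W$--equivariant and yields good path objects in $R_d^f \leftdmod^*[W]$, verifying the HKRS criterion. (An alternative would be to transfer fibrant replacements along the retraction $M \to \oplus_W UM \to M$ of Remark~\ref{rmk:retract}.) The resulting model structure has as cofibrations the maps $f$ with $Uf$ an objectwise monomorphism and as weak equivalences the maps $f$ with $Uf$ an objectwise homology isomorphism.

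The Quillen pair statement is automatic: by construction $U$ sends cofibrations to cofibrations and weak equivalences to weak equivalences, so $(U, \oplus_W)$ is a Quillen pair with $U$ the left Quillen functor.

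It remains to show that $U$ preserves all fibrations, and for this I would exploit the ambidexterity of the adjunction. By the forgetful-functor lemma above, $\oplus_W$ is simultaneously the left and the right adjoint of $U$, so $U$ itself has a left adjoint, namely $\oplus_W$. It therefore suffices to prove that this $\oplus_W$ is a left Quillen functor. This is immediate: for any $f$ in $R_d^f \leftdmod^*[W]$ we have $U(\oplus_W f) \cong \bigoplus_{w\in W} Uf$, since $U$ just forgets the action and $U(w^*M) = UM$. A finite direct sum of an objectwise monomorphism (respectively an objectwise homology isomorphism) in $R_d^f \leftdmod_{ii}$ is again of the same type, so $\oplus_W$ preserves cofibrations and trivial cofibrations. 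Hence $U$ is also a right Quillen functor, and in particular preserves all fibrations and fibrant objects. The main obstacle is really the acyclicity step; once the path object argument is set up, the remaining verifications are formal.
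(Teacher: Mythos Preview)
Your approach is the same as the paper's: invoke the HKRS left-induction theorem via the DG interval, then use the ambidexterity of $\oplus_W$ to show that $U$ is also right Quillen. Two small slips are worth flagging.

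First, for \emph{left}-induced model structures the HKRS acyclicity criterion asks for good \emph{cylinder} objects, not path objects (path objects are what one checks for right-induction). Fortunately your actual construction $C \otimes \Lambda$ is a cylinder object---it factors the fold map $C \oplus C \to C$, not the diagonal---so the argument is correct and only the terminology is reversed. This is exactly what the paper does.

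Second, in the final paragraph the direction is muddled: $\oplus_W$ viewed as the \emph{left} adjoint of $U$ goes from $R_d^f \leftdmod_{ii}$ to $R_d^f \leftdmod^*[W]$, so you should take $f$ in $R_d^f \leftdmod_{ii}$ (not in $R_d^f \leftdmod^*[W]$) and compute $U(\oplus_W f) \cong \bigoplus_{|W|} f$, with no $U$ on the right. With that correction your argument is identical to the paper's.
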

\begin{proof}This follows from \cite[Theorem 2.2.1]{HKRS}. Since both categories are locally presentable and  $R_d^f \leftdmod_{ii}$ is a cofibrantly generated model structure with all objects cofibant we just need to check that there is a good cylinder object for any object in $R_d^f \leftdmod^*[W]$.

The cylinder object in $R_d^f \leftdmod_{ii}$ is given by tensoring with $\bQ\oplus \bQ\lra Cyl(\bQ) \lra \bQ$ (Notice that the tensoring is done objectwise). The same construction gives a cylinder object in $R_d^f \leftdmod^*[W]_{ii}$.

Notice that the lifted model structure on $R_d^f \leftdmod^*[W]$ is the doubly--injective model structure, because $U$ forgets the $W$--action so it creates monomorphisms and homology isomorphisms.

It is clear that the left adjoint $U$ is a left Quillen functor. What we need to show is that it also preserves all fibrations. To do that we will show that its left adjoint is a left Quillen functor. Take a cofibration $f$ in $R_d^f \leftdmod_{ii}$. Applying left adjoint $L$ and then $U$ sends $f$ to $\oplus_{|W|}f$ which is a cofibration. Thus $L(f)$ is a cofibration by the definition of lifted model structure on $R_d^f \leftdmod^*[W]$. Same argument works for an acyclic cofibration. Thus $L$ is a left Quillen functor and $U$ preserves all fibrations.
\end{proof}

\begin{prop}
The doubly--injective model structure on $R_d^f \leftdmod^*[W]_{ii}$ is right proper.
\end{prop}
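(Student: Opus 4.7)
The plan is to deduce right properness directly from the right properness of $R_d^f \leftdmod_{ii}$ established in Lemma \ref{lem:right-proper-doubly-inj}, by transporting a pullback square across the forgetful functor $U$. The key observation is that $U$ has excellent preservation properties on all three classes of maps: being a left adjoint to $\oplus_W$, it preserves all limits (hence pullbacks); by the construction of the left-lifted model structure, a map $f$ in $R_d^f \leftdmod^*[W]_{ii}$ is a weak equivalence if and only if $U(f)$ is a homology isomorphism (so weak equivalences are both preserved and reflected); and by the preceding lemma, $U$ also preserves fibrations.

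Given this, the proof will proceed by taking a pullback square
\[
\xymatrix{
A \times_B X \ar[r] \ar[d]_{\bar p} & X \ar[d]^p \\
A \ar[r]_f & B
}
\]
in $R_d^f \leftdmod^*[W]_{ii}$ with $f$ a weak equivalence and $p$ a fibration, and applying $U$ to obtain the corresponding pullback square in $R_d^f \leftdmod_{ii}$, where $U(f)$ is a weak equivalence and $U(p)$ is a fibration. Right properness of the doubly--injective model structure on $R_d^f \leftdmod_{ii}$ (Lemma \ref{lem:right-proper-doubly-inj}) forces $U(\bar p)$ to be a weak equivalence, and since weak equivalences are reflected by $U$, this shows $\bar p$ is itself a weak equivalence, establishing right properness. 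No delicate argument should be required; the only point worth a line of care is the verification that $U$ preserves fibrations, which was already provided in the previous lemma when showing that the adjunction $(U, \oplus_W)$ (viewed in the other direction) is Quillen.
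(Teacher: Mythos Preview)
Your argument is correct and is essentially the same as the paper's: the paper phrases it as comparing with the \emph{right}-lifted model structure on $R_d^f \leftdmod^*[W]$ (which is right proper since right properness transfers along right-lifting, and whose fibrations contain those of the left-lifted structure precisely because $U$ preserves fibrations), but unwinding that packaging yields exactly your direct transport of the pullback square across $U$.

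One small slip to fix: you write ``being a left adjoint to $\oplus_W$, it preserves all limits'', but left adjoints preserve \emph{colimits}; the correct justification is that $U$ is also a \emph{right} adjoint (indeed $\oplus_W$ is both left and right adjoint to $U$), or more simply that $U$ is a forgetful functor and pullbacks in $R_d^f \leftdmod^*[W]$ are computed on underlying objects.
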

\begin{proof}
The right lifted model structure on $R_d^f \leftdmod^*[W]$ from $R_d^f \leftdmod_{ii}$ also exists, and is right proper as
$R_d^f \leftdmod_{ii}$ is right proper.  In this case every fibration in the left-lifted model structure is in particular a fibration in the right lifted model structure, which finishes the proof.
\end{proof}

Because both model structures  $R_d^f \leftdmod_{ii}$ and $R_d^f \leftdmod^*[W]_{ii}$ are right proper we can cellularise both categories. We choose to cellularise the first one at the derived images of the cells $\bN/K_+$, where $K\leq T$ in $G$ and the right hand side at the set of cells $\torus/K_+$, for every $K\leq \torus$. We call these sets
$\bN/\Tsub_+=\cKS_a$ and $\torus/\Tsub_+$ respectively.
Recall that $\RRa$ from Section \ref{sec:spectratoDGAsformal} is the
diagram of rings $R_d^f$.

\begin{lem}\label{lem:preservefibs}
The adjunction
\[
\xymatrix@C+1cm{
\bN/\Tsub_+ \cell R_d^f \leftdmod^*[W]_{ii}
\ar@<+1ex>[r]^{U}
&
\ar@<+1ex>[l]^{\oplus_W}
\torus/\Tsub_+ \cell R_d^f \leftdmod_{ii}.}
\]
is a Quillen pair, where both functors
preserve all cofibrations, fibrations
and weak equivalences.
Moreover $U$ reflects all weak equivalences.
\end{lem}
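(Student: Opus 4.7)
The plan is to apply the Cellularization Principle of \cite{gscell} to the uncellularised Quillen pair $U \dashv \oplus_W$ (and the opposite Quillen pair $\oplus_W \dashv U$) established in the previous lemma. That lemma shows that in the doubly--injective structures every object is cofibrant and both $U$ and $\oplus_W$ preserve cofibrations, fibrations, and weak equivalences; this is the starting input.

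The crux is cell-compatibility under the forgetful. For $K \leq \torus$, double-coset analysis in $\bN$ (using that $\torus$ is normal in $\bN$ and contains $K$) gives $\bN/K \cong \coprod_{w\in W}\torus/K^w$ as $\torus$--spaces, which transports through the chain of Quillen equivalences to an identity
\[
U(\bN/K_+) \simeq \bigvee_{w\in W}\torus/K^w_+
\]
in $R_d^f \leftdmod$, a wedge of objects of $\torus/\Tsub_+$ (since $K^w \leq \torus$ whenever $K \leq \torus$). Dually, $\oplus_W(\torus/K_+)$ lies in $\cKS_a$, so the generators on the two sides generate the same cellular structure after applying the adjoints. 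More importantly, because $K \leq \torus$, the universal property of the cell $\bN/K_+$ as the representing object for $K$--fixed points only depends on the underlying $\torus$--action, which gives the natural equivalence of derived mapping complexes
\[
\mathrm{RHom}(\bN/K_+, M) \;\simeq\; \mathrm{RHom}(\torus/K_+, UM)
\]
for each $K \leq \torus$ and each equivariant $M$.

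These ingredients fit together as follows. The Cellularization Principle, applied to either of the uncellularised Quillen adjunctions together with the cell compatibility above, yields the Quillen pair between the cellularised categories, with cofibrations preserved automatically (since they are unchanged by cellularisation). The $\mathrm{RHom}$ identity above shows that $U$ both preserves and reflects cellular equivalences: $f$ is a cellular equivalence on the equivariant side iff $\mathrm{RHom}(\bN/K_+, f)$ is an equivalence for all $K \leq \torus$, iff $\mathrm{RHom}(\torus/K_+, Uf)$ is an equivalence for all such $K$, iff $Uf$ is a cellular equivalence on the $\torus$ side. A parallel adjunction argument, using $\mathrm{RHom}(\bN/K_+, \oplus_W N) \simeq \prod_{w\in W}\mathrm{RHom}(\torus/K^w_+, N)$, shows $\oplus_W$ preserves cellular equivalences. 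Preservation of fibrations then follows by adjointness in each of the two Quillen pairs at the cellularised level. The main obstacle is confirming the two structural identities rigorously in the algebraic model --- verifying that both the wedge decomposition of $U(\bN/K_+)$ and the fixed-point $\mathrm{RHom}$ identity survive the chain of Quillen equivalences from rational $\bN$--spectra through $\RRttop$, $\RRtop$, $\RRt$ to $\RRa$; once this is in hand, the rest of the argument is formal.
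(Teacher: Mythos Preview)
Your proposal is correct and follows essentially the same route as the paper: both invoke the Cellularization Principle together with the decomposition of $U(\bN/K_+)$ as a finite wedge of $\torus$-cells, use the adjunction identity $[\bN/K_+, A]^W \cong [\torus/K_+, UA]$ (your $\mathrm{RHom}$ identity) to get preservation and reflection of cellular equivalences by $U$, and then exploit that $\oplus_W$ is both left and right adjoint to $U$ to obtain preservation of fibrations on both sides. Your closing ``main obstacle'' is not really an obstacle: once one knows $\bN/K_+ = \oplus_W(\torus/K_+)$ in the algebraic model (which is immediate from the definition of $\oplus_W$ and the identification of cells), the $\mathrm{RHom}$ identity is just the adjunction isomorphism, and no separate transport argument through the zig-zag is needed.
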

\begin{proof}
To show that $U$ is a left Quillen functor we will use the Cellularization Principle \cite[Theorem 2.7]{gscell} and an observation that the model structures
 \[
\torus/\Tsub_+ \cell R_d^f \leftdmod_{ii}
\quad \quad
U (\bN/\Tsub_+) \cell R_d^f \leftdmod_{ii}
\]
are the same. This follows from the fact that for $K \leq \torus$,
$U(\bN/K_+)=\oplus_{|W|} \torus/K_+$, where $\bN/K_+$ and $\torus/K_+$ denote the derived images of these cells in the algebraic model and $W=\bN/T$ is a finite group.
In fact, $\bN/K_+=\oplus_{W} \torus/K_+$, hence
\[
[\bN/K_+ ,A]^W \cong [\torus/K_+ ,UA]
\]
where the left hand side denotes maps in the homotopy category of
$R_d^f \leftdmod^*[W]_{ii}$ and the right hand side is
maps in the homotopy category of $R_d^f \leftdmod_{ii}$.
It follows immediately that $U$ preserves and reflects all weak equivalences.

The left adjoint of $U$ and the right adjoint of $U$ are the same:
a direct sum of finitely many copies of the input with a suitable $W$--action.
This functor preserves all fibrations, cofibrations
and weak equivalences as $U$ does.
\end{proof}

\subsection{Right hand-side of the diagram}
By \cite[Proposition 11.3]{tnqcore} there is an injective model structure on the category
$d \acal_c^f(\bT)_{i}$ with weak equivalences the homology isomorphisms and cofibrations the monomorphisms. (The category used in the reference is $\acal_c^p(\bT)$,
which is equivalent to $\acal_d^f(\bT)$ by \cite[Corollary 10.1]{tnq3},
it is routine to check that the equivalences are exact.)

The two adjoint pairs linking $d \acal_c^f(\bT)$ with $d \acal_a^f(\bT)$ and $d \acal_d^f(\bT)$ are equivalences of categories where all the functors are exact.
Using these functors we can transfer the injective model structure $d \acal_c^f(\bT)_{i}$ to both $d \acal_a^f(\bT)$ and $d \acal_d^f(\bT)$.
Since all functors are exact they preserve monomorphisms and homology isomorphisms, moreover they are equivalences of categories so they create both of these classes.
It follows that all three of these categories have injective model structures: ones where the weak equivalences are the homology isomorphisms and cofibrations are the monomorphisms.

Furthermore, the adjunctions linking $d \acal_c^f(\bT)$ with $d \acal_a^f(\bT)$ and $d \acal_d^f(\bT)$ are Quillen equivalences.

The fact that $e$ on the right hand side is also a right Quillen functor (since $d_!^e$ is exact) and  \cite[Proposition 11.5]{tnqcore} implies that the remaining
adjunction $(d_* i e, \Gamma^f_d)$ on the right hand side of the diagram is also a Quillen equivalence:

\[
\xymatrix@C+1cm@R+0.5cm{
\bN/\Tsub_+ \cell R_d^f \leftdmod^*[W]_{ii}
\ar@<+1ex>[r]|{U}
\ar@<+1ex>@{-->}[d]^{\Gamma^f_d}
&
\ar@<+1ex>[l]
\torus/\Tsub_+ \cell R_d^f \leftdmod_{ii}
\ar@<+1ex>[d]^{\Gamma^f_d}^{ \ \ \ \ \ \ \ \  QE}
\\
\ar@<+1ex>@{-->}[u]^{d_* i e}
d \acal_d^f(\bN, toral)
\ar@<+1ex>[r]|{U}
\ar@<-1ex>[d]_e
&
\ar@<+1ex>[u]^{d_* i e}
\ar@<+1ex>[l]
d \acal_d^f(\bT)_{i}
\ar@<-1ex>[d]_e
\\
\ar@<-1ex>[u]_{d_!^e}
d \acal_c^f(\bN, toral)
\ar@<+1ex>[r]|{U}
\ar@<-1ex>[d]_{e' f}
&
\ar@<-1ex>[u]_{d_!^e}_{ \ \ \ \ \ \ \ \  QE}
\ar@<+1ex>[l]
d \acal_c^f(\bT)_{i}
\ar@<-1ex>[d]_{e' f}
\\
\ar@<-1ex>[u]_{p \Gamma q_!^d}
d \acal_a^f(\bN, toral)
\ar@<+1ex>[r]|{U}
&
\ar@<-1ex>[u]_{ \ \ \ \ \ \ \ \  QE}_{p \Gamma q_!^d}
\ar@<+1ex>[l]
d \acal_a^f(\bT)_{i}
\\
}
\]

\subsection{Left hand-side of the diagram}

By the same argument as in \cite[Section 11]{tnqcore} there exists an injective model structure on the category
$d \acal_c^f(\bN,toral)$ with weak equivalences the homology isomorphisms,
cofibrations the monomorphisms and fibrations the surjections with injective kernel.

The same argument as in the previous subsection shows that there exist injective model structures on the categories $d \acal_d^f(\bN, toral)$ and $d \acal_a^f(\bN, toral)$. With these model structures the adjunctions between them are Quillen equivalences.

With these model structures on the left hand side of the diagram, the horizontal adjunctions are Quillen pairs.
To check this, it is enough to note that the forgetful functors $U$ preserve all (acyclic) cofibrations. In fact, the forgetful functors $U$ in the second, third and fourth rows create all weak equivalences and cofibrations in the model structures on the left hand side of the diagram.

It remains to show the following.
\begin{prop} The adjunction
\[
\xymatrix@C+1cm{
d \acal_d^f(\bN, toral)_i
\ar@<+1ex>[r]^(.35){d_*ie}
&
\ar@<+1ex>[l]^(.65){\Gamma_d^f}
\bN/\Tsub_+ \cell R_d^f \leftdmod^*[W]_{ii}}
\]
is a Quillen equivalence.
\end{prop}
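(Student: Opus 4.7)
The plan is to deduce this Quillen equivalence from the already-established torus version (the right-hand vertical adjunction $(d_*ie, \Gamma^f_d)$ in Figure~\ref{fig:algebradiagram}), by exploiting that the horizontal forgetful functors $U$ commute with both $d_*ie$ and $\Gamma^f_d$ and reflect weak equivalences.

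First I would verify that $(d_*ie, \Gamma^f_d)$ is a Quillen pair. The functor $d_*ie$ is a composite of exact functors, so it preserves monomorphisms and homology isomorphisms. In the injective model structure on $d \acal_d^f(\bN, toral)_i$ the cofibrations are monomorphisms and the weak equivalences are homology isomorphisms, and passing to the cellularization of the target only enlarges the class of weak equivalences while leaving the cofibrations unchanged; hence $d_*ie$ sends (acyclic) cofibrations to (acyclic) cofibrations.

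Next I would record the naturality properties of $U$. By construction of the equivariant $\Gamma^f_d$ (defined by applying the torus functor to $UM$ and re-equipping with a skewed $W$--action), there are natural isomorphisms $U \circ d_*ie \cong d_*ie \circ U$ and $U \circ \Gamma^f_d \cong \Gamma^f_d \circ U$. At the module-level top row of Figure~\ref{fig:algebradiagram}, $U$ preserves fibrations, cofibrations, weak equivalences, and reflects weak equivalences by Lemma~\ref{lem:preservefibs}. At the algebraic-model second row, $U$ creates weak equivalences and cofibrations by the discussion preceding this proposition, so in particular reflects weak equivalences, and it is a left Quillen functor since $\oplus_W$ is both left and right adjoint to $U$ (Remark~\ref{rmk:retract}); in particular it preserves cofibrant replacements.

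With this in hand, the Quillen equivalence follows by chasing the derived unit and counit through $U$. For a cofibrant $M \in d \acal_d^f(\bN, toral)_i$, write the derived unit as $\eta_M \colon M \to \Gamma^f_d R(d_*ie M)$, where $R$ is a fibrant replacement in the cellularized module category. Applying $U$ yields $UM \to \Gamma^f_d (U R(d_*ie M))$; since the module-level $U$ preserves fibrations and weak equivalences, $U R(d_*ie M)$ is a fibrant replacement of $d_*ie(UM)$. Ken Brown's lemma applied to the right Quillen functor $\Gamma^f_d$ then identifies $U \eta_M$, up to weak equivalence, with the derived unit of the torus adjunction at $UM$, which is a weak equivalence by the torus Quillen equivalence. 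Since the algebraic-model $U$ reflects weak equivalences, $\eta_M$ is itself a weak equivalence. A symmetric argument for the derived counit at a fibrant object, using that the algebraic-model $U$ preserves cofibrant replacements and the module-level $U$ reflects weak equivalences, completes the proof. The main subtlety is that $U$ preserves fibrant and cofibrant replacements only up to weak equivalence, which is precisely where Ken Brown's lemma is invoked.
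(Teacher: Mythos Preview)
Your proposal is correct and follows the same overall strategy as the paper: reduce to the torus Quillen equivalence via the forgetful functor $U$, using that $U$ commutes with $d_*ie$ and $\Gamma^f_d$ and reflects weak equivalences. The execution differs in two minor respects. For the Quillen pair, you argue directly that $d_*ie$ is exact and hence preserves monomorphisms and homology isomorphisms; the paper instead shows $d_*ie(f)$ is a cofibration by exhibiting it as a retract of $\oplus_W U(d_*ie(f))$ via Remark~\ref{rmk:retract}, which avoids appealing to exactness of $d_*ie$ (though the paper does assert this exactness just before Figure~\ref{fig:algebradiagram}). For the Quillen equivalence, you verify the derived unit and counit are weak equivalences, invoking Ken Brown to compare fibrant replacements; the paper uses the equivalent but slightly cleaner characterization that $f\colon d_*ie(X)\to Y$ is a weak equivalence iff its adjoint $f^\flat\colon X\to\Gamma^f_d(Y)$ is, which sidesteps the replacement bookkeeping since one can just apply $U$ to $f$ and to $f^\flat$ directly and use $U(f)^\flat=U(f^\flat)$. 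Both routes are standard and rely on the same inputs (Lemma~\ref{lem:preservefibs} for preservation of fibrant objects at the module level, and that the algebraic-model $U$ creates weak equivalences).
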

\begin{proof} First we need to show the adjunction is a Quillen pair.
Notice that the forgetful functors commute with both adjoints $d_*ie$ and $\Gamma_d^f$ in the following square.
\[
\xymatrix@C+1cm@R+0.5cm{
\bN/\Tsub_+ \cell R_d^f \leftdmod^*[W]_{ii}
\ar@<+1ex>[r]|{U}
\ar@<+1ex>[d]^{\Gamma^f_d}
&
\torus/\Tsub_+ \cell R_d^f \leftdmod_{ii}
\ar@<+1ex>[d]^{\Gamma^f_d}
\\
\ar@<+1ex>[u]^{d_* i e}
d \acal_d^f(\bN, toral)_{i}
\ar@<+1ex>[r]|{U}
&
\ar@<+1ex>[u]^{d_* i e}
d \acal_d^f(\bT)_{i}}
\]

Take a cofibration (a monomorphism) $f \in d \acal_d^f(\bN, toral)_i$.
As $(d_*ie)\circ U$ is a left Quillen functor, we know that
$U(d_*ie(f))=d_*ie(U(f))$ is a cofibration.
Hence $\oplus_W U(d_*ie(f))$ is a cofibration in $d \acal_d^f(\bN, toral)_i$.
Since $d_*ie(f)$ is a retract of $\oplus_W U(d_*ie(f))$ by Remark \ref{rmk:retract}, it follows that
$d_*ie(f)$ is a cofibration.
For the acyclic cofibrations we again use the relation
$U \circ d_*ie=d_*ie \circ U$ along with the fact that $U$
preserves and reflects weak equivalences at each level (at the top level it follows from Lemma \ref{lem:preservefibs}).
Thus $d_*ie$ is a left Quillen functor.

To show that the adjunction is a Quillen equivalence we need to show that for any $X$ in
$d \acal_d^f(\bN, toral)_i$ and fibrant  $Y$ in $\bN/\Tsub_+\cell R_d^f \leftdmod^*[W]_{ii}$
a map $f: d_*ie(X) \lra Y$ is a weak equivalence in $\bN/\Tsub_+\cell R_d^f \leftdmod^*[W]_{ii}$
if and only if its adjoint $f^\flat: X \lra \Gamma^f_d(Y)$ is a weak equivalence in $d \acal_d^f(\bN, toral)_i$.

A map $f: d_*ie(X) \lra Y$ is a weak equivalence in $\bN/\Tsub_+\cell R_d^f \leftdmod^*[W]_{ii}$
if and only if
\[
U(f): U(d_*ie(X))=d_*ie(U(X)) \lra U(Y)
\]
is a weak equivalence in $d \acal_d^f(\bT)_{i}$.
Since $U$ preserves fibrant objects
by Lemma \ref{lem:preservefibs},
$U(Y)$ is fibrant.
So $U(f)$ is a weak equivalence in $d \acal_d^f(\bT)_{i}$ if and only if its adjoint
\[
U(f)^\flat: U(X) \lra \Gamma^f_d(U(Y))
\]
is a weak equivalence in $\torus/\Tsub_+ \cell R_d^f \leftdmod_{ii}$
because the adjunction $(d_*ie, \Gamma^f_d)$ is a Quillen equivalence on the right hand side of the diagram.
Since $U(f)^\flat=U(f^\flat)$ this happens if and only if $f^\flat$ is a weak equivalence in $d \acal_d^f(\bN, toral)_i$ which finishes the proof.
\end{proof}

Collecting all results from above we obtain the algebraic model for rational toral $\bN$--spectra. In fact, we obtain three equivalent algebraic models in this situation, depending on the choice of the indexing diagram category, which we so far indicated by a subscript and superscript in the notation. We omit this indication in the following summary.

\begin{thm} There is a zig-zag of Quillen equivalences between rational toral $\bN$--spectra and an algebraic model $d  \acal (\bN, toral)$ with the injective model structure:
\[
L_{e_{\torus_\bN}S_{\bQ}}(\NspO) \quillen d \acal (\bN, toral).
\]
 \end{thm}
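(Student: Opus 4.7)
The theorem is a summary statement that assembles all of the Quillen equivalences accumulated through Sections \ref{sec:liftedformalcube} through the present one, arranged along the right-hand column of Figure \ref{fig:main}. My plan is simply to compose these equivalences in order, since the genuine technical work has been done in the intervening sections.

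Starting from $L_{e_{\torus_\bN}S_{\bQ}}(\NspO)$, I would first apply the cellularization Quillen equivalence from the end of Section \ref{sec:theNequivcube} (built on the cube $\RRttop$ of Definition \ref{def:Ncube} and Lemma \ref{lem:Ncubeproperties}) to pass to $\cKS \cell \RRttop \modin L_{e_{\torus_\bN}S_{\bQ}}(\NspO)$. Next, the $\torus$-fixed-points adjunction $(\Rinf, \Psi^\torus)$ of Section \ref{sec:torusfixed} yields the Quillen equivalence with $\cKS^\torus \cell \RRtop \modin L_{e_1 S_\bQ}(\WspO)$. The change-of-model-structure-and-universe zig-zag, followed by the passage to $H\bQ$-modules, produces $\cKS^\torus \cell \RRtop \modin (H\bQ \leftmod[W])$. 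Shipleyfication, namely Proposition \ref{prop:alg-qe}, brings us into algebra at $\cKS_t \cell \RRt \modin \bQ[W] \leftdmod$, and the $W$-equivariant formality statement of Proposition \ref{prop:alg-formality} (underpinned by Lemma \ref{lem:eqform}) then gives a zig-zag of Quillen equivalences with $\cKS_a \cell \RRa \modin \bQ[W] \leftdmod$.

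The final and newest step is the algebraic simplification of Figure \ref{fig:algebradiagram}: on the left-hand column the two lower adjunctions are equivalences of categories given by exact functors (which therefore lift to Quillen equivalences of the injective model structures), while the top adjunction $(d_* i e, \Gamma_d^f)$ was just established as a Quillen equivalence using Lemma \ref{lem:preservefibs} and the corresponding equivalence on the $\torus$ side. Composing this column upgrades $\cKS_a \cell \RRa \modin \bQ[W] \leftdmod = \bN/\Tsub_+ \cell R_d^f \leftdmod^*[W]_{ii}$ into $d\acal(\bN, toral)$ with its injective model structure. Chaining all seven stages produces the claimed zig-zag.

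I expect no new obstacle at this assembly step: the genuine difficulties are already absorbed in earlier sections, namely lifting the formal pullback cube from $\torus$-spectra to $\bN$-spectra (Sections \ref{sec:equivcoind}--\ref{sec:theNequivcube}), verifying that formality survives the Weyl-group action (Lemma \ref{lem:eqform}), and showing that the doubly--injective model structure left-lifts well along $U$ while remaining right proper so that the subsequent cellularizations make sense (Lemma \ref{lem:preservefibs} and Lemma \ref{lem:right-proper-doubly-inj}). Given these ingredients, the theorem follows by functorial composition of the displayed Quillen equivalences, and any of the three models listed in Remark \ref{rem:differentWmodelsForN} may be taken as $d\acal(\bN, toral)$.
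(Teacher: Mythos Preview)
Your proposal is correct and takes exactly the approach of the paper: the theorem is stated there with no separate proof beyond the sentence ``Collecting all results from above we obtain the algebraic model for rational toral $\bN$--spectra,'' and your write-up simply makes that collection explicit by tracing the right-hand column of Figure \ref{fig:main} step by step. Your identification $\cKS_a \cell \RRa \modin \bQ[W]\leftdmod = \bN/\Tsub_+ \cell R_d^f \leftdmod^*[W]_{ii}$ and your invocation of the left column of Figure \ref{fig:algebradiagram} for the final algebraic simplification are precisely what the paper intends.
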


Now we are ready to pass to the model for rational toral
$G$--spectra. Notice that to obtain an algebraic model for rational
toral $\bN$--spectra it was not necessary to discuss other models than the one indexed on dimension and flags $d \acal_d^f (\bN, toral)$, since that is the one we can relate directly to the passage from topology. However, the indexing diagram of this sort for rational toral $G$--spectra has not been established.
The only model for that discussed in \cite{AGtoral} is indexed on flags and all (toral) subgroups. Thus to link the model for $\bN$--spectra to the model for $G$--spectra using  \cite{AGtoral} we had to consider $d\acal_a^f(\bN, toral)$.

\section{Passage to the algebraic model for \texorpdfstring{$G$}{G}--spectra}
In this section we complete the proof of our main theorem. 

\begin{thm}
\label{thm:toralmodel}
 There is a zig-zag of Quillen equivalences between rational toral $G$--spectra  and an algebraic model $d  \acal_a^f (G, toral)$ with the injective model structure: 
\[
L_{e_{\torus_G}S_{\bQ}}(\GspO) \quillen d \acal_a^f (G, toral), 
\]
where  $ \acal_a^f (G, toral)$ was defined in \cite{AGtoral}.
 \end{thm}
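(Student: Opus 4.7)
The plan is to concatenate the three Quillen equivalences assembled in the course of the paper, as laid out in the left column of Diagram \ref{fig:main}. First I would invoke Theorem \ref{thm:toralGspectraisGcelltoralNspectra} to pass from toral $G$--spectra to the cellularisation of toral $\bN$--spectra at $G/\Tsub_+$ via the restriction--coinduction adjunction. Second, I would take the zig-zag of Quillen equivalences
\[
L_{e_{\torus_\bN}S_{\bQ}}(\NspO) \quillen d \acal_a^f (\bN, toral)
\]
assembled in the previous sections and apply the Cellularization Principle of \cite{gscell} level-by-level, cellularising each intermediate category at the (derived) images of $G/\Tsub_+$. This is permitted because the images of $G/\Tsub_+$ remain homotopically compact throughout: cellularisation at an idempotent is smashing, and each Quillen equivalence in the zig-zag sends compact generators to compact generators.

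Combining these two inputs yields a zig-zag
\[
L_{e_{\torus_G}S_{\bQ}}(\GspO) \quillen G/\Tsub_+ \cell d \acal_a^f(\bN, toral).
\]
The final step is to identify the right-hand cellularisation with $d \acal_a^f(G, toral)$. For this I would use the algebraic restriction--coinduction adjunction between $d \acal_a^f(\bN, toral)$ and $d \acal_a^f(G, toral)$ constructed in \cite{AGtoral}, verifying that it is a Quillen adjunction with respect to the injective model structures (the left adjoint is exact, so preserves the monomorphisms and homology isomorphisms that define the injective structure). A final application of the Cellularization Principle in algebra, together with \cite[Corollary 7.11]{AGtoral} --- which states that the unit of the derived algebraic restriction--coinduction adjunction is an isomorphism on the generators $G/K_+$ for $K\subseteq \torus$ --- produces the required Quillen equivalence and completes the zig-zag.

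The step I expect to be most delicate is the second one: propagating the cellularisation uniformly along the entire zig-zag for $\bN$--spectra. At each intermediate stage one must verify that the (derived) image of $G/\Tsub_+$ is small in the appropriate homotopy category and that the cellularised Quillen adjunctions remain Quillen equivalences. The critical transitions are those crossing the change of model/universe, the Shipleyfication, and the formality zig-zag, where the underlying categories and ring objects change shape. Once it is checked that each connecting functor preserves the set of generators up to the expected derived equivalence, the conclusion follows formally by stacking Cellularization Principle applications on top of the already-established $\bN$--classification.
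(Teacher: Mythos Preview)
Your overall strategy matches the paper's: reduce to $\bN$ via Theorem~\ref{thm:toralGspectraisGcelltoralNspectra}, cellularise the $\bN$--zig-zag at $G/\Tsub_+$, then use the algebraic $(\theta_*,\Psi)$ adjunction to identify the result with $d\mcA_a^f(G,toral)$. However, two genuine ingredients are missing from your final step, and the concern you flag as ``most delicate'' is not where the real work lies.

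First, you do not address \emph{formality of the cells} $G/K_+$ in $d\mcA(\bN,toral)$. The objects you use to cellularise are the derived images of the topological $G/K_+$ pushed through the entire zig-zag; the objects appearing in the algebraic adjunction are $\theta_*$ applied to the algebraic $G/K_+\in d\mcA(G,toral)$. There is no a~priori reason these coincide. The paper establishes this by proving (via the Adams spectral sequence of \cite{AGtoral} and a vanishing-line argument) that the relevant objects are intrinsically formal, hence determined by their homology; since both candidates have the same homology, they agree. Without this, the two cellularisations need not match and the final Cellularization Principle application does not connect to the rest of the zig-zag.

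Second, applying the Cellularization Principle to $(\theta_*,\Psi)$ with the unit isomorphism from \cite[Corollary~7.11]{AGtoral} only gives
\[
G/\Tsub_+\cell d\mcA(G,toral)\quillen \theta_*(G/\Tsub_+)\cell d\mcA(\bN,toral).
\]
To conclude, you must show that the cellularisation on the $G$ side is vacuous, i.e.\ that $\{G/K_+\mid K\leq\torus\}$ already generates $d\mcA(G,toral)$ in the injective model structure. This is Lemma~\ref{lem:generators_for_G} in the paper and uses the derived induction functor $\theta^!$ of \cite{AGtoral} together with the known generation statement for $\mcA(\torus)$; it is not automatic. By contrast, propagating the cellularisation through the $\bN$--zig-zag (your stated worry) is routine once compactness is noted, and the paper treats it without further comment.
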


The principal remaining ingredient is the formality of the objects we
use to cellularise, which is proved using the Adams spectral sequence
of \cite{AGtoral}, but there are also some model categorical
foundations before we begin.

\subsection{Proper preliminaries}
Before we proceed, we need to establish the formal framework. 

\begin{lem} The model categories $d \acal_c^f(\bN, toral)_{i}$, $d \acal_a^f(\bN, toral)_i$ and $d \acal_d^f(\bN, toral)_i$ are right proper.
\end{lem}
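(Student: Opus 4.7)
The plan is to verify right properness via a direct homological argument in the ambient abelian setting. Since the exact equivalences of categories from Subsection \ref{subsec:relations} preserve and reflect both monomorphisms and homology isomorphisms, they transport the injective model structure between the three categories, so it suffices to establish right properness for just one of them; I would work with $d \acal_c^f(\bN, toral)_i$.

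The first step is to observe that every fibration $f$ in the injective model structure is an objectwise epimorphism. By Lemma \ref{lem:ForgetWactionNmodels}, the forgetful functor $U$ down to $d \acal_c^f(\bT)_i$ preserves fibrations, so this reduces to the corresponding fact for the torus, which follows by testing the right lifting property against the standard acyclic cofibrations of the form $0 \hookrightarrow D^n$ evaluated objectwise. Given a pullback square
\[
\xymatrix@R=1.5pc{
P \ar[r]^{p_A} \ar[d]_{p_B} & A \ar[d]^f \\
B \ar[r]^g & C
}
\]
with $f$ a fibration and $g$ a weak equivalence, pullbacks are computed in the ambient $W$-equivariant diagram category, so $p_B$ is an objectwise epimorphism with $\ker(p_B) \cong \ker(f)$. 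Comparing the two objectwise short exact sequences
\[
0 \to \ker(f) \to P \to B \to 0, \qquad 0 \to \ker(f) \to A \to C \to 0
\]
(the map is the identity on kernels and $g$ on quotients), the five lemma applied to the objectwise long exact sequences in homology forces $p_A$ to be an objectwise homology isomorphism, hence a weak equivalence.

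The main technical care needed is to ensure that the defining conditions on the algebraic model (quasi-coherence, extendedness, $\mcF$-continuity and $W$-equivariance) are preserved by the formation of kernels and pullbacks, so that the five-lemma step takes place inside $d \acal_c^f(\bN, toral)$ rather than in some larger category of diagrams. Since each of these conditions is imposed via a limit diagram in the ambient module-over-diagram category, they are automatically closed under kernels and pullbacks and the argument proceeds entirely inside the algebraic model. There is no deeper obstacle beyond this bookkeeping.
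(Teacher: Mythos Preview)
Your approach is correct and essentially the same as the paper's: both reduce to one of the three categories via the exact equivalences, and both rest on the fact that fibrations are objectwise surjections so that the pullback-along-surjection-preserves-quasi-isomorphisms argument applies. The paper phrases this as first showing $d\acal_c^f(\bT)_i$ is right proper (fibrations are objectwise surjections, pullbacks are computed objectwise) and then lifting to $d\acal_c^f(\bN,toral)_i$ via the left-lifted model structure, whereas you work directly in the $\bN$-category and make the five-lemma step explicit; the underlying content is identical.

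One small correction: Lemma~\ref{lem:ForgetWactionNmodels} only asserts that $U$ is faithful and has both adjoints, not that it preserves fibrations. What you actually need is that the left adjoint $\oplus_W$ of $U$ preserves (acyclic) cofibrations, which is immediate since $U\circ\oplus_W$ is a finite direct sum and $U$ creates cofibrations and weak equivalences; this makes $U$ right Quillen and hence fibration-preserving. With that adjustment your argument goes through.
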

\begin{proof} In the model structure $d \acal_c^f(\bT)_{i}$ fibrations are in particular objectwise surjections, by construction of the model structure in \cite[Proposition 11.3]{tnqcore} and pullbacks in $d \acal_c^f(\bT)$ are calculated objectwise. Thus $d \acal_c^f(\bT)_{i}$ is right proper and since  $d \acal_c^f(\bN, toral)_{i}$ is the left lifted model structure from $d \acal_c^f(\bT)_{i}$ it is also right proper.
It is clear that right-properness is preserved by equivalences of
categories which are also Quillen equivalences between $d \acal_c^f(\bN, toral)_{i}$ and both $d \acal_a^f(\bN, toral)_i$ and $d \acal_d^f(\bN, toral)_i$. Thus all three categories are right proper.
\end{proof}

Now that we know that every model category on the left hand side of Figure 
\ref{fig:algebradiagram} is right proper, we can cellularise them. To obtain an algebraic model of rational toral $G$--spectra we need to cellularise the algebraic model $d \acal_a^f(\bN, toral)_i$ at the set of derived images in $d \acal_a^f(\bN, toral)_i$ of cells $G/K_+$ where $K\leq \torus$, see Figure \ref{fig:main} in the introduction.

The last step of the comparison is the simplification of this non-explicit algebraic model given by  $G/\Tsub_+ \cell d \acal_a^f(\bN, toral)_i$ to obtain a model $d \acal_a^f(G, toral)$ from \cite{AGtoral} with the injective model structure.

This happens in two stages. The first one is to recognise that the cells given by the derived images of $G/K_+$ where $K\leq \torus$ are formal in $d \acal_a^f(\bN, toral)_i$. That is, each cell is weakly equivalent to its own homology in
$d \acal_a^f(\bN, toral)_i$. We can thus keep track of  the objects we use
to cellularise, since they are identified by their homology alone. Below we will use notation $G/K_+$ for the derived image in $d\acal(\bN, toral)$ or $d\acal(G, toral)$ of topological $G/K_+$.
The second step is recognising that this cellularisation of the algebraic model for toral $\bN$--spectra is Quillen equivalent to the injective model structure on $d \acal_a^f(G, toral)$.

\subsection{Formality of \texorpdfstring{$G/K_+$}{G/K+}}

We want to show that the cells $G/K_+$ are formal in $d \acal_a^f(\bN, toral)_i$. It is a bit simpler to first show that the coinduced `cells' $F_K(G_+, S^0)$ are formal in $d \acal_a^f(G, toral)_i$. We will come back to the original problem in Lemma \ref{lem:GKformal}.

We consider the context in which we have a homology functor $\piA_*$
on a triangulated category, and a convergent Adams spectral sequence
$$\Ext^{s,t}_{\mcA}(\piA_*(X), \piA_*(Y))\Rightarrow [X,Y]_{t-s}. $$
As in \cite{tnq1} the observation is that if $\Ext^{s,t}_{\mcA}(M,M)$ has a
vanishing line of slope 1 (in the sense that it is zero for $s>
t-s$), then $M$ is intrinsically formal. Indeed, if $\piA_*(X)\cong
\piA_*(X')\cong M$
in the Adams spectral sequence
\[
\Ext^{s,t}(\piA_*(X), \piA_*(X'))\Rightarrow [X,X']_{t-s}
\]
the identity map in $\Ext^{0,0}$ is an infinite cycle and hence is
represented by a map $f:X\lra X'$. Since $f_*$ is the identity, it is
an isomorphism and  $f$ is an equivalence by the Whitehead theorem.

We are considering the $G$--spectrum $X=F_K(G_+, S^0)$
where $K\subseteq \torus$. This is coinduced from $\torus$:
\[
F_K(G_+,S^0)\simeq F_\torus(G_+,
F_K(\torus_+,S^0))\simeq F_\torus(G_+,\Sigma^{L(\torus,K)}\torus/K_+)
\]
where $L(\torus,K)=T_{eK}\torus/K$ is the tangent space to $\torus/K$ at the
identity coset.  In  algebra, which is to say in the abelian models
$\mcA (G,toral)$ and $\mcA (\bN,toral)=\mcA(\torus)^*[W]$, we have the
corresponding statement. Indeed induction and corestriction correspond
to the adjunction
$$\adjunction{\theta_*}{\mcA (G,toral)}{\mcA (\bN,toral)}{\Psi}$$
as in \cite[Corollary 7.11]{AGtoral}. That paper also describes both homology functors $\piAG_*$ and $\piAN_*$ which we use below.

\begin{lem} \cite[Proposition 11.12]{AGtoral}
$$\piAG_* (F_K(G_+,S^0))=\Psi \piAN_*(F_K(\bN_+,S^0)). $$
\end{lem}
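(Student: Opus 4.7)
The plan is to combine two ingredients: transitivity of coinduction along the chain $K \subseteq \bN \subseteq G$, and the fact that $\Psi$ is the algebraic incarnation of the topological coinduction functor $F_\bN(G_+, -)$.

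First, standard adjunction arguments give a natural equivalence of $G$--spectra
\[
F_K(G_+, S^0) \simeq F_\bN(G_+, F_K(\bN_+, S^0)),
\]
since coinduction factors through the intermediate group $\bN$. This is a purely formal consequence of the associativity of internal hom applied to the factorisation $K \subseteq \bN \subseteq G$.

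Second, the adjoint pair $(\theta_*, \Psi)$ between $\mcA(G, toral)$ and $\mcA(\bN, toral)$ was constructed in \cite{AGtoral} as the algebraic counterpart of the restriction--coinduction adjunction between toral $G$-- and $\bN$--spectra. In particular, for every toral $\bN$--spectrum $Y$ there is a natural isomorphism
\[
\piAG_*(F_\bN(G_+, Y)) \cong \Psi(\piAN_*(Y)).
\]
Specialising to $Y = F_K(\bN_+, S^0)$ and composing with the equivalence from the first step gives the required identification.

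The main obstacle is the second step: verifying that $\piAG_*$ genuinely intertwines $F_\bN(G_+, -)$ with $\Psi$. Although this compatibility is baked into the construction of $\Psi$ and is already implicit in \cite[Corollary 7.11]{AGtoral} (which handles the unit of the derived adjunction), making it explicit requires unpacking the sheaf-like structure of the abelian models. Objects of $\mcA(G, toral)$ are determined by their values over the $G$--orbits of connected toral subgroups together with Weyl-group descent data, and on a coinduced $G$--spectrum these values reorganise, via Mackey-style decompositions indexed by $G/\bN$, into precisely the data that $\Psi$ extracts from $\piAN_*(Y)$. Once this matching of data is recorded, the lemma follows formally from the two steps above.
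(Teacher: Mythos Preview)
The paper does not supply its own proof of this lemma; it is quoted directly from \cite[Proposition 11.12]{AGtoral}. Your outline is the natural way to see the statement, and indeed the paper itself records the transitivity step $F_K(G_+,S^0)\simeq F_\bN(G_+,F_K(\bN_+,S^0))$ (in fact through $\torus$) just before stating the lemma, so your first ingredient is exactly what is being used. The second ingredient, that $\piAG_*\circ F_\bN(G_+,-)\cong \Psi\circ \piAN_*$, is precisely the content of the cited result from \cite{AGtoral}; your sketch of how this is verified (matching sheaf data over $G$--orbits of toral subgroups with the data $\Psi$ extracts) is along the right lines, though the details live in \cite{AGtoral} rather than here.
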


For brevity, we simplify notation and write
$$M_G(K)=\piAG_* (F_K(G_+, S^0)), $$
so that the lemma states
$$M_G(K)=\Psi M_\bN(K). $$
Recall that  $\piAN_*(X)$ is the same as $\piAT_*(X)$, but with the action of
$W$ remembered.

\begin{prop}
The object $M_G(K)$ has a vanishing line of slope 1, and hence is
intrinsically formal.
\end{prop}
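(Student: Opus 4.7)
The plan is to reduce the Ext computation to the case of a torus, where the abelian model has finite injective dimension and the relevant modules are well understood. The reduction happens in two stages: first from $G$ to $\bN$, then from $\bN$ to $\torus$.

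First I would invoke the adjunction $\theta_* \dashv \Psi$ from \cite[Corollary 7.11]{AGtoral}. Since $\theta_*$ is exact (corresponding to restriction on the topological side), the right adjoint $\Psi$ preserves injectives, and the identity $M_G(K) = \Psi M_\bN(K)$ combined with the adjunction gives
\[
\Ext^{s,t}_{\mcA(G,toral)}(M_G(K), M_G(K))
\cong
\Ext^{s,t}_{\mcA(\bN,toral)}(\theta_*\Psi M_\bN(K), M_\bN(K)).
\]
The composite $\theta_*\Psi$ admits a Mackey-style decomposition coming from the double coset set $\bN \backslash G / \bN$, so $\theta_*\Psi M_\bN(K)$ splits as a direct sum of shifted modules of the form $M_\bN(K^g)$ for $g$ running over a set of double coset representatives.

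Next, using the forgetful functor $U \colon \mcA(\bN,toral) \to \mcA(\torus)$ and its biadjoint $\oplus_W$ described in Subsection \ref{subsec:forgetful}, the problem reduces to a computation in the torus model: each Ext group in $\mcA(\bN,toral)$ becomes a $W$-invariant part of a sum of Ext groups of the form $\Ext^{s,t}_{\mcA(\torus)}(M_\torus(K'), M_\torus(K))$, where $K'$ runs over the $W$-conjugates of the relevant subgroups.

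Finally, the required slope-$1$ vanishing line for these torus Ext groups follows from the analysis already present in \cite{tnqcore}. The essential inputs are the finite injective dimension of $\mcA(\torus)$ (equal to the rank $r$ of $\torus$) and the concentration of each $M_\torus(K)$ in a range of internal degrees controlled by $\dim(\torus/K)$; cross-indexing these bounds produces the slope-$1$ vanishing. Intrinsic formality then follows from the Whitehead-style Adams spectral sequence argument sketched immediately before the statement: the identity in $\Ext^{0,0}$ is a permanent cycle, so the Adams SS for $[F_K(G_+,S^0), F_K(G_+,S^0)]^G$ produces an actual equivalence. The main obstacle is bookkeeping: one must track the internal-degree shifts in the Mackey decomposition of $\theta_*\Psi M_\bN(K)$ precisely enough to ensure the correct slope is preserved through both reduction steps. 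This is handled using the explicit formulae for inflation, coinduction, and $\piA_*$ developed in \cite{AGtoral}.
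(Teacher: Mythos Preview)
Your overall reduction strategy (pass from $\mcA(G,toral)$ to $\mcA(\bN,toral)$ via the $(\theta_*,\Psi)$ adjunction, then forget to $\mcA(\torus)$ and take $W$-invariants) matches the paper's. The divergence is in how you handle the first variable $\theta_*\Psi M_\bN(K)$ once you are over $\torus$.

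You propose to decompose $\theta_*\Psi M_\bN(K)$ by a Mackey-style double coset formula and then track the degree shifts summand by summand. This is the step you yourself flag as ``the main obstacle'', and you do not actually establish it: a Mackey decomposition of $\res^G_\bN F_\bN(G_+,-)$ in the toral algebraic model is not available off the shelf (the intersections $\bN^g\cap\bN$ are not controlled in any simple way), and even granting a decomposition you give no argument that the shifts all lie on the correct side of the line. As written this is a gap rather than a bookkeeping exercise.

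The paper sidesteps this entirely. Its point is that one does not need to know anything about $\theta_*\Psi M_\bN(K)$ beyond the fact that it is $(-1)$-connected. The explicit injective resolution of $M_\torus(K)$ from \cite[Corollary~3.39]{tnq1} has its $s$th term a sum of injectives of the form $\Sigma^{2s}f_K(H_*(B\torus/K))$; consequently, for \emph{any} $(-1)$-connected $N$ one has $\Ext^{s,t}_{\mcA(\torus)}(N,M_\torus(K))=0$ for $t<2s$, which is exactly the slope-$1$ vanishing line. Applying this with $N=\theta_*M_G(K)$ gives the result immediately, with no decomposition of the source required. (Minor point: the vanishing line for the torus is in \cite{tnq1}, not \cite{tnqcore}.)
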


\begin{proof} First we note that by \cite[Lemma 14.2]{AGtoral}, we have
$$\piAN_*(F_\torus(\bN_+, A))\cong \piAT_*(A)[W]. $$
Since $M_\torus(K)$ has a vanishing line of slope 1 by \cite[Corollary
3.39]{tnq1}, it follows that $M_\bN(K)$ has a vanishing line of slope 1.

Considering the proof, the  injective resolution of $M_\torus(K)$ described in \cite{tnq1} has
the property that the $s$th term is a sum of injectives
$\Sigma^{2s}f_K(H_*(B\torus /K))$. It follows that if $N$ is $(-1)$-connected, then
$\Ext^{s,t}(N, M_\torus(K))$ has a vanishing line of slope 1. This applies
with  $N=M_G(K)$.
\end{proof}

Recall from \cite{AGtoral} that $\Psi $ is an exact right adjoint so the
adjunction passes to Ext.
\begin{lem} There are isomorphisms
\[
\Ext_{\mcA(G,toral)}^{s,t}(L,\Psi M) \cong
\Ext_{\mcA(\bN,toral)}^{s,t}(\theta_* L, M) \cong \Ext_{\mcA(\torus)}^{s,t}(\theta_* L, M)^W.
\]
\end{lem}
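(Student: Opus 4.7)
The plan is to prove the two isomorphisms independently, each via an injective resolution argument combined with a natural identification of Hom-spaces. I start by fixing an injective resolution $M \to I^\bullet$ in $\mcA(\bN, toral)$.

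For the first isomorphism, the crucial input is that $\Psi$ preserves injectives, which follows from its left adjoint $\theta_*$ being exact. Exactness of $\theta_*$ is what we borrow from \cite{AGtoral}; combined with exactness of $\Psi$ itself (also recalled from that paper), this makes $\Psi M \to \Psi I^\bullet$ an injective resolution of $\Psi M$ in $\mcA(G, toral)$. Applying the $(\theta_*, \Psi)$-adjunction degreewise and passing to cohomology yields
\[
\Ext^{s,t}_{\mcA(G, toral)}(L, \Psi M) = H^s\Hom_{\mcA(G, toral)}(L, \Psi I^\bullet) \cong H^s\Hom_{\mcA(\bN, toral)}(\theta_* L, I^\bullet) = \Ext^{s,t}_{\mcA(\bN, toral)}(\theta_* L, M).
\]

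For the second isomorphism, I use that by definition $\mcA(\bN, toral) = \mcA(\torus)^*[W]$, so the natural Hom-identity
\[
\Hom_{\mcA(\bN, toral)}(A, B) = \Hom_{\mcA(\torus)}(UA, UB)^W
\]
holds, where $U$ is the forgetful functor. By Lemma \ref{lem:ForgetWactionNmodels} and the surrounding discussion, $U$ is exact with both adjoints given by the exact functor $\oplus_W$; in particular $U$ preserves injectives, so $UM \to UI^\bullet$ is an injective resolution in $\mcA(\torus)$. Since $|W|$ is finite and we work over $\bQ$, Maschke's theorem renders $(-)^W$ an exact functor on $\bQ[W]$-modules, so it commutes with $H^s$. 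Thus
\[
\Ext^{s,t}_{\mcA(\bN, toral)}(\theta_* L, M) = H^s\bigl(\Hom_{\mcA(\torus)}(U\theta_* L, UI^\bullet)^W\bigr) = \bigl(\Ext^{s,t}_{\mcA(\torus)}(\theta_* L, M)\bigr)^W.
\]

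The only nontrivial obstacle is the exactness of $\theta_*$ needed to guarantee that $\Psi$ preserves injectives; once this is in hand from \cite{AGtoral}, both isomorphisms follow from standard abelian-category manipulations together with the rational invariants argument.
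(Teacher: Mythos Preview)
Your proof is correct and follows essentially the same approach as the paper: the paper's proof reduces to the observation that $\Psi$ takes injective resolutions to injective resolutions (being an exact right adjoint with exact left adjoint), which is exactly your argument for the first isomorphism. Your treatment is in fact more careful, since you make explicit that preservation of injectives requires exactness of $\theta_*$ rather than merely $\Psi$ being a right adjoint, and you supply the Maschke-theorem argument for the second isomorphism, which the paper leaves entirely implicit.
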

\begin{proof} Since $\Psi $ is a right adjoint, it
 takes injectives to injectives. Since it is exact,  it takes injective
 resolutions to injective resolutions.
\end{proof}

Applying this lemma to the case in hand, we conclude
$$\Ext_{\mcA(G,toral)}^{s,t}(M_G(K),M_G(K)) \cong
\Ext_{\mcA(\torus)}^{s,t}(\theta_* \Psi M_\bN(K), M_\bN(K))^W. $$

\begin{rem}
To show formality for cells of the form $G/K_+$ in $d \mcA(G,toral)_i$, we start with
$G/K_+=F_\bN(G_+, \Sigma^{L(G,\bN)} \bN/K_+)$ 
(with $L(G, \bN)$ the tangent space representation) and proceed as before. We now
work with
\[
M_\bN(K)=\piAN_*(\Sigma^{L(G,\bN)} \bN/K_+) =\piAT_*(\Sigma^{L(G,\bN)} \bN/K_+)[W]. 
\]
This has vanishing line of slope 1 over $\torus$ because suspension
preserves this property: indeed, we may simply suspend the resolution
and note that the shifts in the suspensions are the same in domain and
codomain and hence preserve the vanishing line.
\end{rem}

Now we come back to formality in $d \mcA (\bN,toral)_i$.

\begin{lem}\label{lem:GKformal} The cells $G/K_+$ are intrinsically formal in $d\mcA (\bN,toral)_i$.
\end{lem}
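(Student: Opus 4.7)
The plan is to run the Adams spectral sequence formality argument of the preceding Proposition inside $d\mcA(\bN, toral)_i$. Setting $M := \piAN_*(G/K_+)$, it suffices to produce a slope-$1$ vanishing line for $\Ext^{s,t}_{\mcA(\bN, toral)}(M,M)$: the identity class in $\Ext^{0,0}$ is then a permanent cycle in the Adams spectral sequence, and the representing map $(M,0) \to G/K_+$ is an equivalence by the Whitehead theorem in the algebraic model.

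The reduction to the torus goes via the forgetful functor $U \colon \mcA(\bN, toral) = \mcA(\torus)^*[W] \to \mcA(\torus)$. By Lemma \ref{lem:ForgetWactionNmodels} this functor is exact with both adjoints given by $\oplus_W$, so it preserves injectives; coupled with the exactness of $W$-invariants in characteristic zero, this yields
\[
\Ext^{s,t}_{\mcA(\bN, toral)}(M,M) \;\cong\; \Ext^{s,t}_{\mcA(\torus)}(UM, UM)^{W},
\]
so it is enough to produce the bound for self-Ext of $UM = \piAT_*(G/K_+)$ in $\mcA(\torus)$. Note that $UM$ is $(-1)$-connected, since $G/K_+$ is a suspension spectrum of a based space.

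For the torus analysis, Illman's theorem equips the compact $\torus$-manifold $G/K$ (with $K \leq \torus$) with a finite $\torus$-CW structure whose cells have the form $\Sigma^n \torus/K'_+$ with $n \geq 0$ and $K' \leq \torus$. By the argument of the preceding Remark (which invokes \cite[Corollary 3.39]{tnq1}), the $(-1)$-connectivity of $UM$ together with the explicit injective resolution of $M_\torus(K') = \piAT_*(\torus/K'_+)$ whose $s$-th term is a $2s$-fold suspension of $f_{K'}(H^*(B\torus/K'))$ gives a slope-$1$ vanishing line for $\Ext^{s,t}_{\mcA(\torus)}(UM, M_\torus(K'))$ for each $K' \leq \torus$; suspending the target by $n \geq 0$ only enlarges the vanishing region (the line $t = 2s$ moves out to $t = 2s + n$), so the same bound holds for each cell $\Sigma^n M_\torus(K')$. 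The finite skeletal filtration of $G/K_+$ then yields a conditionally convergent spectral sequence whose $E_1$-page is a direct sum of such Ext groups and which abuts to $\Ext^{*,*}_{\mcA(\torus)}(UM, UM)$; since each $E_1$ term satisfies the slope-$1$ bound and the filtration has finite length, so does the abutment.

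The main obstacle is really just the bookkeeping of the skeletal spectral sequence and its (automatic) convergence; once these are in place, the slope-$1$ bound propagates from the building blocks $M_\torus(K')$ to $UM$ because suspensions by non-negative amounts preserve the vanishing half-plane, and the $W$-invariants functor on the top side cannot enlarge the non-vanishing region.
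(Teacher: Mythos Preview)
Your reduction to $\mcA(\torus)$ via the forgetful functor $U$ is fine: the identification
$\Ext^{s,t}_{\mcA(\bN,toral)}(M,M)\cong\Ext^{s,t}_{\mcA(\torus)}(UM,UM)^W$
follows exactly as you say, since $U$ is exact with exact left adjoint $\oplus_W$ and $(-)^W$ is exact rationally.

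The gap is in the last step. The skeletal filtration of $G/K_+$ is a filtration of an object in the \emph{derived} category $d\mcA(\torus)$, and the spectral sequence it produces converges to $[G/K_+,G/K_+]_*$ there, not to $\Ext^{*,*}_{\mcA(\torus)}(UM,UM)$. What you need is a filtration of $UM=\piAT_*(G/K_+)$ in the \emph{abelian} category with subquotients that are (suspensions of) the $M_\torus(K')$. But applying $\piAT_*$ to the cofibre sequences $X_{i-1}\to X_i\to X_i/X_{i-1}$ only gives long exact sequences; the induced filtration of $UM$ by images has subquotients which are merely \emph{subquotients} of $\bigoplus\Sigma^iM_\torus(K')$, and the slope-$1$ vanishing line for $\Ext(N,-)$ is proved from the explicit injective resolution of $M_\torus(K')$ --- it is not obviously inherited by arbitrary subquotients. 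So the $E_1$-page you describe is not the one you get, and the inheritance of the vanishing line is unjustified.

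The paper's argument sidesteps this entirely. It does not try to establish the vanishing line directly in $\mcA(\bN,toral)$; instead it uses the adjunction $(\theta_*,\Psi)$ between $\mcA(G,toral)$ and $\mcA(\bN,toral)$. The preceding Proposition and Remark already give intrinsic formality of $G/K_+$ in $d\mcA(G,toral)$, and the image in $d\mcA(\bN,toral)$ is $\theta_*$ of this. Given any $P$ with $H_*P\cong H_*(\theta_*\Psi M_\bN(K))$, one pushes $P$ forward to $d\mcA(G,toral)$ via $\Psi$, invokes formality there to get $\Psi P\simeq\Psi M_\bN(K)$, and then comes back: since the unit of $(\theta_*,\Psi)$ is an isomorphism, the counit is an isomorphism on objects in the image of $\theta_*$, hence on $H_*P$, so $\theta_*\Psi P\to P$ is a quasi-isomorphism. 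No vanishing-line computation for $\Ext(UM,UM)$ is needed at all.
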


\begin{proof} We proved that $G/K_+$ is intrinsically formal in $d\mcA
  (G,toral)_i$.  It is of the form $\Psi M_\bN(K)$ for some $M_\bN(K) \in d\mcA (\bN,toral)_i$. Thus it is enough to show that if $\Psi M_\bN(K)$ is intrinsically formal in $d\mcA (G,toral)_i$ then $\theta_* \Psi M_\bN(K)$ is intrinsically formal in $d\mcA (\bN,toral)_i$, since we know that $\theta_*$ corresponds to restriction.

Take $P \in d \mcA (\bN,toral)_i$ with the same homology as $\theta_* \Psi M_\bN(K)$. 
The unit of the adjunction is an isomorphism by \cite[Corollary 7.11]{AGtoral},
so $\Psi \theta_* \Psi M_\bN(K)\cong \Psi M_\bN(K)$. 
Hence $\Psi M_\bN(K)$ and $\Psi P$ have the same homology in $d\mcA (G,toral)_i$ and since $\Psi M_\bN(K)$ was formal, $\Psi M_\bN(K) \simeq \Psi P$. Thus 
$\theta_* \Psi M_\bN(K) \simeq \theta_* \Psi P$. 

Since the unit of the adjunction $(\theta_*, \Psi)$ is an isomorphism, 
the counit is an isomorphism on an object of the form $\theta_* X$, 
by the triangle equality as drawn below.  
\[
\xymatrix@C+1cm@R+0.5cm{
\theta_*\Psi(\theta_* X)
\ar[r]^-{\epsilon_{\theta_* X}}
&
\theta_*X
\ar[dl]^-{=}
\\
\theta_*X
\ar[u]_-{\theta_*\eta}^{\cong}
}
\]
The functor $\theta_*$ is exact, hence
\[
H_* P \cong H_* \theta_* \Psi M_\bN(K) \cong \theta_* H_*\Psi M_\bN(K).
\]
It follows that the counit on $H_* P$ is an isomorphism. 
As $\Psi$ is also exact, it follows that the counit $\theta_* \Psi P \lra P$ 
is a homology isomorphism. Thus we have shown that 
\[
\theta_* \Psi M_\bN(K)
\simeq 
\theta_* \Psi P
\simeq 
P. \qedhere
\]
\end{proof}

\subsection{Algebraic model for rational toral \texorpdfstring{$G$}{G}--spectra}

In this section we omit subscripts $a$ and superscripts $f$ from the notation, since we only consider indexing diagrams built out of flags and all (toral) subgroups.

Before we pass to $d\mcA (G,toral)$ we need two more results, which will allow us to recognise cellular equivalences in $d\mcA (G,toral)$ with respect to the set of cells $G/K_+$ for $K\leq \torus$ as precisely homology isomorphisms.

\begin{lem} All elements of the set of cells $G/K_+$ for $K\leq \torus$ in $d\mcA (G,toral)$ are homotopically compact.
\end{lem}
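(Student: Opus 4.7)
The plan is to reduce compactness in $d\mcA(G, toral)$ with the injective model structure to the (already established) compactness of cells in $d\mcA(\bN, toral)$, exploiting the fully faithful embedding $\theta_* : \mcA(G, toral) \to \mcA(\bN, toral)$ from \cite[Corollary 7.11]{AGtoral}. First I would adapt the formality arguments above (the vanishing-line proposition and Lemma \ref{lem:GKformal}) to $d\mcA(G, toral)_i$: by Wirthmuller, $G/K_+ \simeq \Sigma^{L(G/K)} F_K(G_+, S^0)$, so its homology differs from $M_G(K)$ only by a shift, and the vanishing line of slope $1$ already proved applies. Hence $G/K_+$ is intrinsically formal in $d\mcA(G, toral)_i$ and is represented in the homotopy category by a specific object of the abelian category $\mcA(G, toral)$, over which morphisms are computed by finitely many Ext groups (using the finite injective dimension of $\mcA(G, toral)$).

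For the reduction step, I would use that $\theta_*$ is exact and that the unit $\eta_X : X \to \Psi \theta_* X$ is an isomorphism for all $X$, which upgrades to show that the derived $\theta_*$ is fully faithful on the level of homotopy categories (since $\Psi$ is also exact, no derived correction is needed). As a left adjoint, $\theta_*$ also preserves coproducts. Combining these, for any family $Y_i$ in $d\mcA(G, toral)$:
\[
\left[G/K_+, \bigoplus_i Y_i\right]_{d\mcA(G, toral)} \cong \left[\theta_*(G/K_+), \bigoplus_i \theta_* Y_i\right]_{d\mcA(\bN, toral)},
\]
so compactness of $G/K_+$ in $d\mcA(G, toral)$ follows once we establish compactness of $\theta_*(G/K_+)$ in $d\mcA(\bN, toral)$.

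To finish, I would identify $\theta_*(G/K_+)$ with a finite coproduct of cells $(\bN/H_i)_+$. Under the correspondence between algebra and topology used throughout the paper, $\theta_*$ realises the restriction functor $i^* : \GspO \to \NspO$, and the $\bN$-set $G/K$ decomposes into finitely many $\bN$-orbits indexed by double cosets $\bN \backslash G / K$. Each resulting $(\bN/H_i)_+$ is compact in $d\mcA(\bN, toral)$ via the Quillen equivalence with toral $\bN$--spectra proved earlier in the paper: each is a finite $\bN$-CW-complex, hence compact in toral $\bN$--spectra, and Quillen equivalences preserve compactness on the homotopy category. A finite coproduct of compact objects is compact, completing the argument. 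The main technical obstacle is the identification of $\theta_*(G/K_+)$ in $d\mcA(\bN, toral)$ as a finite coproduct of cells, which rests on carefully matching the topological restriction $i^*$ with the algebraic $\theta_*$ of \cite{AGtoral} and tracking how the double-coset decomposition of $G/K$ propagates through the chain of Quillen equivalences (taking care that the summands remain toral for $\bN$ so that the argument stays inside the toral model).
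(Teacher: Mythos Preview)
Your reduction to $d\mcA(\bN,toral)$ via the fully faithful exact functor $\theta_*$ is sound: since $\theta_*$ is a left adjoint with isomorphic unit, it embeds the homotopy category of $d\mcA(G,toral)$ into that of $d\mcA(\bN,toral)$ and preserves coproducts, so compactness of $G/K_+$ would indeed follow from compactness of $\theta_*(G/K_+)$.

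The error is in your identification of $\theta_*(G/K_+)$ as a finite coproduct of orbits $(\bN/H_i)_+$ via double cosets $\bN\backslash G/K$. You are treating $G/K$ as a discrete $\bN$--set, but it is an $\bN$--manifold. For a connected $G$ that is not a torus one has $\dim \bN=\dim \torus<\dim G$, so $\bN$ cannot act transitively on the connected manifold $G/K$, and the orbit decomposition is certainly not a disjoint union of finitely many closed orbits. (For instance, take $G=SU(2)$, $K=1$: then $G/K=S^3$ with the $\bN=O(2)$--action, which has a continuum of one--dimensional orbits.) The patch is simple: instead of decomposing into orbits, argue directly that $i^*(G/K)$ is a compact smooth $\bN$--manifold, hence a finite $\bN$--CW--complex by Illman, so $i^*(G/K_+)$ is compact in toral $\bN$--spectra; its image in $d\mcA(\bN,toral)$ (which agrees with $\theta_*(G/K_+)$ by the formality of Lemma~\ref{lem:GKformal}) is then compact because Quillen equivalences preserve compactness.

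By contrast, the paper's proof is shorter and avoids this issue entirely. It uses the \emph{further} left adjoint $\theta^!:\mcA(\torus)\to\mcA(G,toral)$ modelling induction: since its right adjoint $\theta_*$ is itself a left adjoint (with right adjoint $\Psi$), $\theta_*$ preserves coproducts, and hence $\theta^!$ preserves compact objects. One then simply observes $G/K_+\cong\theta^!(\torus/K_+)$ and invokes the known compactness of $\torus/K_+$ in $d\mcA(\torus)$. This bypasses any analysis of $i^*(G/K)$ as an $\bN$--space.
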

\begin{proof}First notice that by \cite[Proposition 16.2]{AGtoral} there is a derived functor $$\theta^!:\mcA (\torus) \lra \mcA (G,toral)$$ which models the induction functor on spectra and is left adjoint to $\theta_*$ at the derived level. Since $\theta_*$ is also a left adjoint at the derived level it commutes with coproducts. Thus $G/K_+\cong \theta^!(\torus/K_+)$ in $\mcA (G,toral)$ for any $K \leq \torus$ and since $\torus/K_+$ were homotopically compact in $d\mcA (\torus)$, their images under the derived functor $\theta^!$ are as well. We used here the formality of the cells in both models.
\end{proof}

\begin{lem}\label{lem:generators_for_G} The set $G/\Tsub_+$ of cells $G/K_+$ for $K\leq \torus$ in $d\mcA (G,toral)$ is a set of generators for the injective model structure on $d\mcA (G,toral)$.
\end{lem}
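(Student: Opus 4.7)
The plan is to reduce the generation statement in $d\mcA(G, toral)$ to the corresponding, already-known statement for the torus, by exploiting the adjunctions between the algebraic models of $\torus$, $\bN$ and $G$. Concretely I want to show: if $X \in d\mcA(G, toral)$ satisfies $[G/K_+, X]^G_* = 0$ for every $K \leq \torus$, then $H_*(X) = 0$. Since the weak equivalences in the injective model structure on $d\mcA(G, toral)$ are precisely the homology isomorphisms, this is what is required for $G/\Tsub_+$ to be a set of generators.

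The first step is to invoke the previous lemma to write $G/K_+ \cong \theta^!(\torus/K_+)$, where $\theta^! : d\mcA(\torus) \to d\mcA(G, toral)$ is the derived left adjoint of \cite[Proposition 16.2]{AGtoral}. Letting $\widetilde{\theta}_*$ denote its right adjoint (the composite of $\theta_* : d\mcA(G, toral) \to d\mcA(\bN, toral)$ with the forgetful functor $U$ to $d\mcA(\torus)$), the derived adjunction gives
\[
0 \;=\; [\theta^!(\torus/K_+), X]^G_* \;\cong\; [\torus/K_+, \widetilde{\theta}_* X]^\torus_*
\]
for every $K \leq \torus$. Since $\{\torus/K_+ : K \leq \torus\}$ is known to be a set of generators for the injective model structure on $d\mcA(\torus)$ (the torus case, handled in \cite{tnqcore}), this forces $H_*(\widetilde{\theta}_* X) = 0$. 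The functor $U$ only forgets the $W$-action and therefore reflects homology isomorphisms, so in fact $H_*(\theta_* X) = 0$ in $d\mcA(\bN, toral)$.

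To transport acyclicity back from $d\mcA(\bN, toral)$ to $d\mcA(G, toral)$, I would apply \cite[Corollary 7.11]{AGtoral}, which says that the unit $\eta_X : X \to \Psi \theta_* X$ of the adjunction $(\theta_*, \Psi)$ is an isomorphism. Since $\Psi$ is exact (this exactness is already used in the proof of Lemma \ref{lem:GKformal}), the acyclicity of $\theta_* X$ yields $H_*(\Psi \theta_* X) = 0$, and hence $H_*(X) = 0$ via the unit isomorphism.

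The only real obstacle is bookkeeping: one has to make sure the chain of derived adjunctions $\theta^! \dashv \widetilde{\theta}_* = U \circ \theta_*$ and $\theta_* \dashv \Psi$ is correctly set up at the derived level, and that the forgetful functor $U$ genuinely reflects weak equivalences (which is clear since it creates them in the left-lifted injective structure of Lemma \ref{lem:ForgetWactionNmodels}). All the required inputs are available from \cite{AGtoral} and \cite{tnqcore}, so once the adjunctions are assembled the argument is a short diagram chase.
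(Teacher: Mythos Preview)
Your proposal is correct and follows essentially the same route as the paper: both reduce to showing that cellular triviality forces acyclicity, invoke $G/K_+ \cong \theta^!(\torus/K_+)$ and the adjunction $\theta^! \dashv \theta_*$ to pass to $\mcA(\torus)$, and then appeal to the torus case from \cite{tnqcore}. Your final step is in fact more explicit than the paper's: where the paper simply says ``$\theta_*$ is exact and that implies $H_*(X)=0$'', you spell out that exactness gives $H_*(\theta_* X)=\theta_* H_*(X)$ and that one still needs the unit isomorphism $X\cong \Psi\theta_* X$ (together with exactness of $\Psi$) to conclude $H_*(X)=0$ --- which is indeed the missing justification behind the paper's terse sentence.
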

\begin{proof}

We will use here the name ``cellular equivalence'' for a weak equivalence in the model category $G/\Tsub_+\cell d\mcA (G,toral)$ and we will show that cellular equivalences are homology isomorphisms.

By the use of mapping cones, it is enough to show that if an object $X$ is cellularly trivial (i.e. $[G/K_+,X]^{\mcA(G,toral)}=0$ for all $K\leq \torus$) then $H_*X=0$.
Using the derived functor $\theta^!: \mcA (\torus)\lra \mcA (G,toral)$ from \cite[Proposition 16.2]{AGtoral} which models the induction functor on spectra we  get that
\[
0=[G/K_+,X]^{\mcA(G,toral)}=[\theta^!(\torus/K_+),X]^{\mcA(G,toral)}=[\torus/K_+,\theta_*(X)]^{\mcA(\torus)}
\]
and by \cite[Theorem 12.1]{tnqcore} $H_*(\theta_*(X))=0$. Notice that $\theta_*$ is an exact functor and that implies that $H_*(X)=0$, which finishes the proof.
\end{proof}

Finally, we move to $d\mcA (G,toral)$.

\begin{thm}
The adjunction
$$\adjunction{\theta_*}{d\mcA (G,toral)}{d\mcA (\bN,toral)}{\Psi}$$
is a Quillen pair when both categories are considered with the injective model structures.
It becomes a Quillen equivalence after we cellularise the left hand side at $\theta_*(G/K_+)$ where $K \leq \torus$
$$\adjunction{\theta_*}{G/\Tsub_+\cell d\mcA (\bN,toral)}{d\mcA (G,toral)}{\Psi}$$
\end{thm}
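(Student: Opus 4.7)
The plan is in two parts: first verifying that $(\theta_*, \Psi)$ is a Quillen pair for the injective model structures, and then applying the Cellularization Principle of \cite{gscell} to promote it to a Quillen equivalence after cellularising at $G/\Tsub_+$.

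For the Quillen pair, recall that in the injective model structures cofibrations are monomorphisms and weak equivalences are homology isomorphisms. By \cite[Proposition 16.2]{AGtoral}, the functor $\theta_*$ admits a left adjoint $\theta^!$ in addition to the right adjoint $\Psi$, and so preserves both limits and colimits. In the abelian setting this makes $\theta_*$ exact, so it preserves monomorphisms and homology isomorphisms; thus $\theta_*$ is a left Quillen functor.

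For the Quillen equivalence I plan to apply \cite[Theorem 2.7]{gscell} with the set $Q = \{G/K_+ \mid K \leq \torus\}$ of cells in $d\mcA(G, toral)_i$. The ingredients needed are: (i) both model categories are right proper, which we have established; (ii) each cell in $Q$ is homotopically compact in $d\mcA(G, toral)_i$, by the preceding lemma; (iii) each derived image $\theta_*(G/K_+)$ is homotopically compact in $d\mcA(\bN, toral)_i$; and (iv) the derived unit $X \to \Psi\theta_* X$ is a weak equivalence for $X \in Q$. Point (iv) is immediate: both $\theta_*$ and $\Psi$ are exact, so the derived unit coincides with the underived unit, and by \cite[Corollary 7.11]{AGtoral} this is a natural isomorphism on all of $d\mcA(G, toral)$. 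Moreover, Lemma \ref{lem:generators_for_G} tells us that $Q$ generates the injective model structure on $d\mcA(G, toral)$, so cellularising at $Q$ on the $G$ side leaves the model structure unchanged. With all of these inputs in place the Cellularization Principle yields the claimed Quillen equivalence.

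The main obstacle is (iii): the homotopical compactness of $\theta_*(G/K_+)$ in $d\mcA(\bN, toral)_i$. Via the adjunction this reduces to showing that $\Psi$ commutes with coproducts at the level of homotopy categories, which is not automatic since $\Psi$ is a right adjoint and coproducts are colimits. The approach is to exploit formality of the cells $G/K_+$ in $d\mcA(\bN, toral)_i$ (Lemma \ref{lem:GKformal}), reducing the question to a direct algebraic check using the explicit description of $\Psi$ supplied by \cite{AGtoral}, together with the fact that $\theta_*$ itself commutes with coproducts (having a further left adjoint $\theta^!$).
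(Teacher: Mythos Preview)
Your overall strategy matches the paper's: both argue that $(\theta_*,\Psi)$ is a Quillen pair for the injective structures because $\theta_*$ is exact, and both invoke the Cellularization Principle together with the facts that the derived unit is an isomorphism (from \cite[Corollary 7.11]{AGtoral} and exactness of $\Psi$) and that the cells $G/K_+$ already generate $d\mcA(G,\mathrm{toral})$.

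There is one genuine difference in the remaining ingredient. You isolate as ``the main obstacle'' the homotopical compactness of $\theta_*(G/K_+)$ in $d\mcA(\bN,\mathrm{toral})$, and propose to obtain it by showing that $\Psi$ commutes with coproducts via its explicit description in \cite{AGtoral}. The paper sidesteps this entirely: instead of compactness of the images, it verifies that the derived \emph{counit} on each $\theta_*(G/K_+)$ is a weak equivalence. This is immediate from the triangle identity: since the unit $\eta$ is a natural isomorphism, the composite
\[
\theta_* X \xrightarrow{\ \theta_*\eta_X\ } \theta_*\Psi\theta_* X \xrightarrow{\ \varepsilon_{\theta_* X}\ } \theta_* X
\]
is the identity, so $\varepsilon_{\theta_* X}$ is an isomorphism for every $X$ (and exactness of both functors means the derived counit agrees with the underived one). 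This is precisely the argument already used in Lemma~\ref{lem:GKformal}. Your route via compactness is valid but more laborious; the paper's route requires no analysis of $\Psi$ whatsoever.

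One small caution on your Quillen-pair argument: you deduce exactness of $\theta_*$ from its having a left adjoint $\theta^!$ by \cite[Proposition~16.2]{AGtoral}, but that result only supplies a \emph{derived} left adjoint, not one at the level of abelian categories. The paper instead relies directly on the exactness of $\theta_*$ and $\Psi$ as functors of abelian categories, as established in \cite{AGtoral}.
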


\begin{proof}
The adjunction is a Quillen pair with respect to injective model structures on both sides, since both adjoints at the level of abelian categories ($\mcA (G,toral)$ and $\mcA (\bN,toral)$) are exact, thus $\theta_*$ preserves monomorphisms and homology isomorphisms.

Moreover since the unit of the adjunction at the level of abelian categories is an isomorphism by \cite[Corollary 7.11]{AGtoral} and $\Psi$ is exact (and thus preserves all homology isomorphisms) it follows that the derived unit is a weak equivalence.

We use Cellularisation Principle \cite{gscell} to show that this adjunction becomes a Quillen equivalence after cellularisation of the right hand-side at $\theta_*(G/K_+)$ where $K \leq \torus$. By Lemma \ref{lem:generators_for_G}  the set $G/K_+$ where $K\leq \torus$ is a set of generators for the injective model structure on $d\mcA (G,toral)$, so we only need to show that the derived counit on $\theta_*(G/K_+)$ is a weak equivalence. 
This follows from the same argument regarding the counit as the one in the proof of  
Lemma \ref{lem:GKformal}.
%
%
\end{proof}

Finally, collecting all results from above we have completed the proof
of Theorem \ref{thm:toralmodel} to obtain the algebraic model for rational toral $G$--spectra.

\bibliographystyle{alpha}
\bibliography{toralbib}

\begin{thebibliography}{BGKS17}

\bibitem[BGKS17]{BGKSso2}
D.~Barnes, J.~P.~C. Greenlees, M.~K\k{e}dziorek, and B.~Shipley.
\newblock Rational {${\rm SO}(2)$}-equivariant spectra.
\newblock {\em Algebr. Geom. Topol.}, 17(2):983--1020, 2017.

\bibitem[Gre]{Gmfo}
J.~P.~C. Greenlees.
\newblock Triangulated categories of rational equivariant cohomology theories.
\newblock Oberwolfach Reports 8/2006, 480-488.

\bibitem[Gre98]{greratmack}
J.~P.~C. Greenlees.
\newblock Rational {M}ackey functors for compact {L}ie groups. {I}.
\newblock {\em Proc. London Math. Soc. (3)}, 76(3):549--578, 1998.

\bibitem[Gre99]{s1q}
J.~P.~C. Greenlees.
\newblock Rational {$S\sp 1$}-equivariant stable homotopy theory.
\newblock {\em Mem. Amer. Math. Soc.}, 138(661):xii+289, 1999.

\bibitem[Gre08]{tnq1}
J.~P.~C. Greenlees.
\newblock Rational torus-equivariant stable homotopy {I}: {C}alculating groups
  of stable maps.
\newblock {\em J. Pure Appl. Algebra}, 212(1):72--98, 2008.

\bibitem[Gre16a]{AGtoral}
J.~P.~C. Greenlees.
\newblock Rational equivariant cohomology theories with toral support.
\newblock {\em Algebr. Geom. Topol.}, 16(4):1953--2019, 2016.

\bibitem[Gre16b]{tnq3}
J.~P.~C. Greenlees.
\newblock Rational torus-equivariant stable homotopy {III}: {C}omparison of
  models.
\newblock {\em J. Pure Appl. Algebra}, 220(11):3573--3609, 2016.

\bibitem[Gre18]{CouniversalCommutative_greenlees}
J.~P.~C. Greenlees.
\newblock Couniversal spaces which are equivariant commutative ring spectra.
\newblock {arXiv: 1801.09766}, 2018.

\bibitem[GS13]{gscell}
J.~P.~C. Greenlees and B.~Shipley.
\newblock The cellularization principle for {Q}uillen adjunctions.
\newblock {\em Homology Homotopy Appl.}, 15(2):173--184, 2013.

\bibitem[GS14a]{gsfixed}
J.~P.~C. Greenlees and B.~Shipley.
\newblock Fixed point adjunctions for equivariant module spectra.
\newblock {\em Algebr. Geom. Topol.}, 14(3):1779--1799, 2014.

\bibitem[GS14b]{gsmodules}
J.~P.~C. Greenlees and B.~Shipley.
\newblock Homotopy theory of modules over diagrams of rings.
\newblock {\em Proc. Amer. Math. Soc. Ser. B}, 1:89--104, 2014.

\bibitem[GS18]{tnqcore}
J.~P.~C. Greenlees and B.~Shipley.
\newblock An algebraic model for rational torus-equivariant spectra.
\newblock {\em J. Topol.}, 11(3):666--719, 2018.

\bibitem[HKRS17]{HKRS}
K.~Hess, M.~K{\k{e}}dziorek, E.~Riehl, and B.~Shipley.
\newblock A necessary and sufficient condition for induced model structures.
\newblock {\em J. Topol.}, 10(2):324--369, 2017.

\bibitem[Ill83]{illman}
S.~Illman.
\newblock The equivariant triangulation theorem for actions of compact {L}ie
  groups.
\newblock {\em Math. Ann.}, 262(4):487--501, 1983.

\bibitem[K{\k{e}}d17]{KedziorekSO(3)}
M.~K{\k{e}}dziorek.
\newblock An algebraic model for rational {${\rm SO}(3)$}-spectra.
\newblock {\em Algebr. Geom. Topol.}, 17(5):3095--3136, 2017.

\bibitem[MM02]{mm02}
M.~A. Mandell and J.~P. May.
\newblock Equivariant orthogonal spectra and {$S$}-modules.
\newblock {\em Mem. Amer. Math. Soc.}, 159(755):x+108, 2002.

\bibitem[RS17]{richtershipley}
B.~Richter and B.~Shipley.
\newblock An algebraic model for commutative {H{$\mathbb{Z}$}}--algebras.
\newblock {\em Algebr. Geom. Topol.}, 17(4):2013--2038, 2017.

\bibitem[Shi07]{shiHZ}
B.~Shipley.
\newblock {$H \mathbb{Z}$}-algebra spectra are differential graded algebras.
\newblock {\em Amer. J. Math.}, 129(2):351--379, 2007.

\bibitem[tD79]{tomdieck1}
T.~tom Dieck.
\newblock {\em Transformation groups and representation theory}, volume 766 of
  {\em Lecture Notes in Mathematics}.
\newblock Springer, Berlin, 1979.

\end{thebibliography}

\end{document}